\documentclass[reqno]{amsart}
\usepackage[a4paper, margin=1in]{geometry}

\usepackage{hyperref}
\usepackage{amsmath,amsthm,amssymb,mathtools}
\usepackage{bm}
\usepackage{bbm}
\usepackage{booktabs}
\usepackage{graphicx}
\usepackage[nameinlink,capitalize]{cleveref}
\hypersetup{colorlinks=true,linkcolor=blue,citecolor=blue,urlcolor=blue}
\usepackage{microtype}
\usepackage{enumitem}
\usepackage{mathdots}

\usepackage{nicematrix}

\usepackage{matlab-prettifier}
\usepackage{algorithm}
\usepackage[noend]{algpseudocode}

\usepackage{todonotes}

\usepackage[numbers]{natbib}
\usepackage{listings}

\newtheorem{theorem}{Theorem}[section]
\newtheorem{lemma}[theorem]{Lemma}
\newtheorem{proposition}[theorem]{Proposition}
\newtheorem{corollary}[theorem]{Corollary}

\theoremstyle{definition}
\newtheorem{definition}[theorem]{Definition}

\newtheorem{example}[theorem]{Example}

\newtheorem{problem}[theorem]{Problem}
\newtheorem{question}[theorem]{Question}
\newtheorem{remark}[theorem]{Remark}

\newcommand{\C}{\mathbb{C}}
\newcommand{\R}{\mathbb{R}}
\newcommand{\Z}{\mathbb{Z}}

\newcommand{\vect}{\mathrm{vec}}
\newcommand{\rank}{\mathrm{rank}}

\numberwithin{equation}{section}

\begin{document}

\title{Structured Matrix Factorization Length}

\author{Jeong-Hoon Ju}
\address{
\parbox{\linewidth}{(Jeong-Hoon Ju) Department of Mathematical Sciences, University of Copenhagen, 5 Universitetsparken, 2100 Copenhagen Ø, Denmark}}
\email{jeonghoon.ju@math.ku.dk}

\author{Taehyeong Kim*}
\address{
\parbox{\linewidth}{(Taehyeong Kim) Nonlinear Dynamics and Mathematical Application Center, Kyungpook National University, Daegu 41566, Republic of Korea}}
\email{thkim0519@knu.ac.kr}

\thanks{*Corresponding author}

\date{\today}

\begin{abstract}
Every (resp. a generic) complex $n \times n$ matrix can be expressed as a product of $2n+5$ (resp. $\lfloor n/2 \rfloor +1$) Toeplitz matrices. Motivated by this result, it is natural to ask the following question: what is the minimum number of Toeplitz matrices required to factor a given matrix? We generalize this question from Toeplitz structure to more general structures. In this paper, we introduce the notion of structured matrix factorization length when the set of matrices with a given structure is an affine variety $X \subseteq \mathbb{C}^{n \times n}$. Then we introduce the $r$-th $X$-factorization variety, defined as the Zariski closure of the set of products of $r$ matrices in $X$, and use it to define the border structured matrix factorization length. In particular, we study the cases in which $X$ is the affine variety of Toeplitz, Hankel, bidiagonal, tridiagonal, skew-symmetric or companion matrices. We calculate the dimension of the $X$-factorization varieties for all these cases, and discuss how numerical algebraic geometry can be used to obtain computational evidence for the degrees of $X$-factorization varieties with an example. In addition, we propose methods for deriving lower and upper bounds for (border) structured matrix factorization length. For lower bounds, we develop a method based on displacement rank, which can also be used to obtain some defining equations of the $r$-th $X$-factorization variety; for upper bounds, we suggest an approach using alternating minimization. 
\end{abstract}

\keywords{Structured matrix factorization, factorization length, factorization varieties, displacement rank, alternating minimization}

\subjclass[2020] {14A10, 15A23, 15B05}

\maketitle

\tableofcontents

\section{Introduction}

A Toeplitz matrix $T$ is a structured matrix defined by a square matrix whose entries are constant along each diagonal, that is, an $n \times n$ Toeplitz matrix is of the form
\begin{equation}\label{eq:Toeplitz}
    T = 
    \begin{bmatrix}
        a_{0} & a_{1} & \cdots & a_{n-1}\\
        a_{-1} & a_{0} & \ddots & \vdots \\
        \vdots & \ddots & \ddots & a_{1}\\
        a_{-(n-1)} & \cdots & a_{-1} & a_{0}\\
    \end{bmatrix},
\end{equation}
for some constants $a_{-(n-1)},...,a_{n-1}$. Because of this structure, a Toeplitz matrix has computational advantages in tasks such as matrix-vector multiplication and determinant computation. If we embed a given $n \times n$ Toeplitz matrix into a $2n \times 2n$ circulant matrix and use fast Fourier transform (FFT), then the matrix-vector multiplication for a Toeplitz matrix can be calculated in $O(n\log n)$ time rather than $O(n^2)$ (see \cite[P4.8.6]{golub2013matrix}). This implies that $n \times n$ Toeplitz-arbitrary matrix multiplication can be calculated in $O(n^2\log n)$ rather than $O(n^3)$. In addition, the determinant of an $n \times n$ Toeplitz matrix can be computed in $O(n^2)$ time stably rather than $O(n^3)$ (see \cite{bareiss1969numerical, bojanczyk1995stability}). 

Motivated by these results, it is proved that every (resp. generic) complex $n \times n$ matrix can be expressed as a product of $2n+5$ (resp. $\lfloor n/2 \rfloor+1$) Toeplitz matrices in \cite{ye2016every}. Considering the computational advantages of the Toeplitz matrix, it is natural to have the following question: for a given square matrix $A \in \mathbb{C}^{n \times n}$, what is the smallest number $r$ such that $A=T_1\cdots T_r$ where $T_1,...,T_r$ are Toeplitz matrices? On the other hand, there are similar results with respect to other structured matrices, for example, symmetric matrix \cite{bosch1986factorization}, Hankel matrix \cite{ye2016every}, bidiagonal, tridiagonal, skew-symmetric, and companion matrices \cite{ye2017new}. Hence, it is natural to ask the question above for other structures:

\begin{question}
    For a given structure $X$ for square matrices and a given square matrix $A \in \mathbb{C}^{n \times n}$, what is the smallest number $r$ such that $A=M_1\cdots M_r$ where $M_1,...,M_r$ are $X$-matrices?
\end{question}

In the case of Hankel matrix structure, as in the Toeplitz case, this question is also related to the computational complexity of matrix-vector and determinant computation, because a Hankel matrix
\begin{equation*}
    H = 
    \begin{bmatrix}
        a_{-(n-1)} & \cdots & a_{-1} & a_{0}\\
        \vdots & \iddots & a_{0} & a_{1}\\
        a_{-1} & \iddots & \iddots & \vdots \\
        a_{0} & a_{1} & \cdots & a_{n-1}\\
    \end{bmatrix}
\end{equation*}
whose entries are constant along each anti-diagonal, can be obtained by multiplying the reversal permutation matrix
\begin{equation}\label{eq:reversal permutation matrix}
    J:=\begin{bmatrix}
        0 & 0 & \cdots & 1\\
        \vdots & \vdots & \iddots & \vdots\\
        0 & 1 & \cdots & 0\\
        1 & 0 & \cdots & 0
    \end{bmatrix}
\end{equation} 
to the Toeplitz matrix $T$ given in (\ref{eq:Toeplitz}).

In the case of companion matrix structure, a companion matrix is of the form
\begin{equation*}
    C = 
    \begin{bmatrix}
        0 & 0 & \cdots & 0 & c_1\\
        1 & 0 & \cdots & 0 & c_2\\
        \vdots & \vdots & \ddots & \vdots & \vdots \\
        0 & 0 & \cdots & 1 & c_n
    \end{bmatrix}.
\end{equation*}
Hence, multiplication by a companion matrix $C$ shifts the coordinates and adds a rank one feedback term determined by the last coordinate. From the perspective of linear dynamical systems, factoring a transformation $A$ into a product of companion matrices $A = C_1 \cdots C_r$ conceptually decomposes a global transformation $y = Ax_0$ into a sequence of intermediate states $x_k = C_k x_{k-1}$, where each step is an elementary shift-feedback update. Thus, the minimum number of such $r$ measures the minimum number of such elementary updates needed to realize a given linear transformation.

A tridiagonal matrix
\begin{equation*}
    D = 
    \begin{bmatrix}
        a_1 & b_1 & 0 & \cdots & 0\\
        c_1 & a_2 & b_2 & \ddots & \vdots\\
        0 & c_2 & \ddots & \ddots & 0\\
        \vdots & \ddots & \ddots & a_{n-1} & b_{n-1}\\
        0 & \cdots & 0 & c_{n-1} & a_n
    \end{bmatrix}
\end{equation*}
only couples each coordinate with its nearest neighbors. Hence, it represents a linear operator with a local interaction pattern.
Hence, the minimum $r$ such that a given matrix $A$ is factored into a product of tridiagonal matrices $A=M_1\cdots M_r$ measures how many local factors are needed to represent $A$. 

The main contribution of this paper is as follows. To the best of our knowledge, this paper is the first to introduce the notion of structured matrix factorization length and formulate it as an algebro-geometric object. More precisely, for an affine variety $X$ in $\mathbb{C}^{n \times n}$ modeling a matrix structure, we define the \emph{$X$-factorization length} of a matrix $A$ as the smallest number of factors from $X$ needed to express $A$ (see Definition \ref{def:structured factorization length}). We also introduce the \emph{$r$-th $X$-factorization variety} as the Zariski closure of the set of products of $r$ matrices in $X$, and define the \emph{border $X$-factorization length} of a given matrix $A$ as the minimum $r$ such that $A$ is in the $r$-th $X$-factorization variety (see Definition \ref{def:border structured factorization length}). This viewpoint allows us to study structured matrix factorization using tools from algebraic geometry. We develop this framework for several classical matrix structures, including Toeplitz, Hankel, bidiagonal, tridiagonal, skew-symmetric, and companion matrices. For these examples, we compute the dimensions of the corresponding factorization varieties, and we determine the varieties for some cases. We also illustrate how numerical algebraic geometry can be used to obtain computational evidence for their degrees using an example of Toeplitz matrix structure. Furthermore, we introduce a method for lower bounds of border $X$-factorization length, which is based on \emph{displacement rank}. This method for lower bound can also be used to obtain some defining equations of the $X$-factorization varieties. In addition, we study the case of traceless symmetric matrix structure, because it is an orthogonally invariant matrix structure which has not been studied in this field. Finally, we suggest an approach for upper bounds of $X$-factorization length, which uses \emph{alternating minimization}.

This framework is parallel to the well-established theory of tensor rank and secant varieties. That is, we will investigate the analogues of tensor rank, border rank, maximal rank, generic rank, invariance of tensor rank under group actions, flattening methods for lower bounds on border rank, and optimization methods for the upper bounds on tensor rank. We refer to \cite{kolda2009tensor, landsberg2011tensors} for these contents.

The structure of this paper is as follows: In Section \ref{section:Preliminaries}, we introduce some basic notions in algebraic geometry needed in this paper, and review previous relevant results on structured matrix factorization. In Section \ref{section:Structured Matrix Factorization Length}, we define the structured matrix factorization length and border structured matrix factorization length, and we provide some examples of them. In Section \ref{section:Geometry of Structured Matrix Factorizations}, we study the geometry of $r$-th $X$-factorization varieties focusing on their dimensions, degrees, and defining equations. In Section \ref{section:Orthogonally Invariant Matrix Structures}, we investigate the (border) $X$-factorization length when $X$ is orthogonally invariant, in particular, when $X$ is the affine variety of traceless symmetric matrices. Finally, in Section \ref{section:Alternating Minimization for Structured Matrix Factorization}, we discuss an alternating minimization approach for constructing structured matrix factorizations.

\section*{Acknowledgements}

J.-H. J. was supported by the Basic Science Program of the NRF of Korea (NRF-2022R1C1C1010052), the Basic Research Laboratory (grant MSIT no. RS-202400414849) and the Novo Nordisk Foundation (grant NNF20OC0059939). T.~Kim was supported by the National Research Foundation of Korea (NRF) grant funded by the Korea government (MSIT) (No. 2022R1A5A1033624 \&\,RS-2024-00342939 \&\,RS-2025-25436769). The authors would like to thank Alexander Blomenhofer, Matthias Christandl, Thomas Fraser, Yifan Jia, Yeongrak Kim, Itai Leigh, and Maksym Zubkov for helpful discussions and suggestions. 

\section{Preliminaries}\label{section:Preliminaries}

In this section, we briefly introduce some basic notions in algebraic geometry that will be used in this paper. Then we introduce some previous relevant results motivating the notion of structured matrix factorization length. Because we will develop the theory from an algebro-geometric perspective, we let the base field be the complex number field $\mathbb{C}$, unless stated otherwise.

\subsection{Basic algebraic geometry}

Algebraic geometry studies zero loci of polynomial equations. 
When polynomials $f_1,...,f_s$ are given in a polynomial ring $\mathbb{C}[x_1,...,x_n]$, the zero locus of these polynomials
\begin{equation*}
    \mathbf{V}(f_1,...,f_s):=\{p \in \mathbb{C}^n~|~f_1(p)=\cdots=f_s(p)=0\}
\end{equation*}
in $\mathbb{C}^n$ is called the \emph{affine variety defined by $f_1,...,f_s$}. If $I \subseteq \mathbb{C}[x_1,...,x_n]$ is an ideal, then the zero locus
\begin{equation*}
    \mathbf{V}(I):=\{p \in \mathbb{C}^n~|~f(p)=0~\text{ for all } f\in I\}
\end{equation*}
in $\mathbb{C}^n$ is called the \emph{affine variety defined by $I$}. By Hilbert Basis Theorem, an ideal $I$ in $\mathbb{C}[x_1,...,x_n]$ is always finitely generated. Hence, we can say that an affine variety is always a zero locus of finitely many polynomials.

The collection of all affine varieties in $\mathbb{C}^n$ is closed under arbitrary intersection, and hence form the closed sets of a topology on $\mathbb{C}^n$. We call it the \emph{Zariski topology}. When a subset $S$ in $\mathbb{C}^n$ is given, we can consider its closures with respect to the Zariski topology and the Euclidean topology, which are called the \emph{Zariski closure} and \emph{Euclidean closure}, respectively. We will denote them $\overline{S}^{\text{Zar}}$ and $\overline{S}^{\text{Euc}}$, respectively. In general, the Euclidean closure is contained in the Zariski closure, and they do not need to be the same. For example, the Zariski closure of the subset $\mathbb{Z}$ in $\mathbb{C}$ is the whole space $\mathbb{C}$, but the Euclidean closure of $\mathbb{Z}$ is $\mathbb{Z}$ itself.

If $X$ is an affine variety in $\mathbb{C}^n$, then the set
\begin{equation*}
    \mathbf{I}(X):=\{f \in \mathbb{C}[x_1,...,x_n]~|~f(x)=0~\text{ for all }~x \in X\}
\end{equation*}
is an ideal in $\mathbb{C}[x_1,...,x_n]$, which is called the \emph{defining ideal of $X$}. An element in this ideal is called a defining equation of $X$. The quotient ring
\begin{equation*}
    \mathbb{C}[X]:=\mathbb{C}[x_1,...,x_n]/\mathbf{I}(X)
\end{equation*}
is the ring of polynomial functions on $X$, and is called the \emph{coordinate ring} of $X$.

Let two affine varieties $X \subseteq \mathbb{C}^n$ and $Y \subseteq \mathbb{C}^m$ be given. A map $\varphi:X \rightarrow Y$ is called a \emph{morphism} if $\varphi=(\varphi_1,...,\varphi_m)$ where $\varphi_i \in \mathbb{C}[X]$ for all $i=1,...,m$. For example, for an affine variety $X$ in $\mathbb{C}^{n \times n}$, its $r$-product $X^{\times r}$ is also an affine variety in $(\mathbb{C}^{n \times n})^{\times r}$, and the matrix multiplication map 
\begin{equation}\label{eq:matrix multiplication map}
    m_r:X^{\times r} \rightarrow \mathbb{C}^{n \times n}, (M_1,...,M_r) \mapsto M_1\cdots M_r
\end{equation}
is a morphism. This morphism is a central object in this paper. A morphism between affine varieties is called an \emph{isomorphism} if it is bijective and its inverse is also a morphism. If two affine varieties $X$ and $Y$ are isomorphic, we will write $X \cong Y$. If the image of a morphism $\varphi:X \rightarrow Y$ between affine varieties is dense in $Y$ with respect to the Zariski topology, that is, $\overline{\varphi(X)}^{\text{Zar}}=Y$, then $\varphi$ is called a \emph{dominant morphism}.

The image of a morphism between affine varieties need not be Zariski closed, and this leads to the notion of a \emph{constructible set}. Let $X$ be an affine variety. A subset of $X$ is called a \emph{locally closed subset} of $X$ if it is the intersection of an open subset and a closed subset of $X$. A subset of $X$ is called a \emph{constructible subset} of $X$ if it can be written as a finite union of locally closed subsets of $X$. It is well-known that the image of a morphism between affine varieties is a constructible set. In our setting, the image of a matrix multiplication map is a constructible set and need not be Zariski closed, and hence one is naturally led to study its Zariski closure. Moreover, a product of two irreducible affine varieties is also irreducible.

A nonempty subset $S$ of an affine variety $X$ is said to be \emph{irreducible} if it cannot be expressed as the union $S=S_1 \cup S_2$ of two proper subsets which are closed in $S$. It is well-known that, if $\varphi:X \rightarrow Y$ is a morphism between affine varieties and $X$ is irreducible, then the image of $\varphi$ is also irreducible (as a constructible subset) in $Y$. If the defining ideal $\mathbf{I}(X)$ of an affine variety $X$ is a prime ideal, then $X$ is irreducible. Hence, every affine linear subspace, which is defined by affine linear polynomials, in $\mathbb{C}^n$ is an irreducible affine variety.

We will frequently use the notion of a \emph{generic property} in this paper. We say that a property $\mathcal{P}$ holds \emph{generically} on an affine variety $X$ if there exists a proper Zariski closed subset $Y$ of $X$ such that $\mathcal{P}$ holds for every point in $X \setminus Y$. In this situation, we also often say that a \emph{generic element} of $X$ satisfies the property $\mathcal{P}$. For example, a generic $n \times n$ matrix is of full rank, because the set of $n \times n$ matrices which are not of full rank is the affine variety $Y$ defined by the $n \times n$ determinant polynomial, that is, the full-rankness holds for every point in $\mathbb{C}^{n \times n} \setminus Y$.

Two numerical invariants (under isomorphism) play a particularly important role in algebraic geometry, and they are \emph{dimension} and \emph{degree}. Let $X$ be an affine variety in $\mathbb{C}^n$. The \emph{dimension} of $X$, denoted by $\dim X$, is the maximum of $d$ such that there exists a chain $Z_0 \subsetneq Z_1 \subsetneq \cdots \subsetneq Z_d$ of irreducible closed subsets of $X$. Note that the empty set is not irreducible. For example, $\dim \mathbb{C}^n=n$, and the dimension of a single-point set is zero. The codimension of $X$ is $n-\dim X$. Assume that $\mathbf{I}(X)$ is generated by $f_1,...,f_s \in \mathbb{C}[x_1,...,x_n]$. It is well-known that, for the \emph{Jacobian matrix at $p$}
\begin{equation*}
    \mathbf{J}_p=\left[\frac{\partial f_i}{\partial x_j}(p)\right]_{1 \leq i \leq s,~ 1 \leq j \leq n},
\end{equation*}
the dimension of $X$ is the same as
\begin{equation*}
    \min\{\dim(\ker(\mathbf{J}_p))~|~p \in X\}
\end{equation*}
if $X$ is irreducible. Let $p \in X$. The set
\begin{equation*}
    T_pX:=\ker(\mathbf{J}_p)
\end{equation*}
is called the \emph{tangent space of $X$ at $p$}. If $\dim (\ker(\mathbf{J}_p))=\dim X$, then $p$ is called a \emph{smooth point of $X$}. If all points in $X$ are smooth, then $X$ is called a \emph{smooth affine variety}. In this case, $X$ can be regarded as a complex manifold, and $\dim X$ is the same as the dimension of $X$ as a complex manifold. When $X$ is defined as the Zariski closure of the image of a morphism between affine varieties, we will calculate $\dim X$ using the \emph{differential} of the morphism (see Section \ref{subsection:Dimension}). When the morphism is $\varphi=(\varphi_1,...,\varphi_m):X (\subseteq \mathbb{C}^n) \rightarrow Y (\subseteq \mathbb{C}^m)$, the \emph{differential of $\varphi$ at $x \in X$}, denoted by $d\varphi_x$, is defined by the map 
\begin{equation*}
    d\varphi_x:T_xX \rightarrow T_{\varphi(x)}Y, ~(v_1,...,v_n) \mapsto \left(\sum_{i=1}^n \frac{\partial \varphi_1}{\partial x_i}(x)v_i,\cdots , \sum_{i=1}^n\frac{\partial \varphi_m}{\partial x_i}(x)v_i\right).
\end{equation*}
For example, the matrix multiplication map $m_r$ defined at (\ref{eq:matrix multiplication map}) has the differential $d(m_r)_{(M_1,...,M_r)}:T_{M_1}X \times \cdots \times T_{M_r}X \rightarrow T_{M_1\cdots M_r}\mathbb{C}^{n \times n}$ at $(M_1,...,M_r) \in X^{\times r}$ given by
\begin{equation*}
    d(m_r)_{(M_1,...,M_r)}(Z_1,...,Z_r)=\sum_{i=1}^rM_1\cdots M_{i-1}Z_iM_{i+1}\cdots M_r.
\end{equation*}

Let $X \subseteq \mathbb{C}^n$ be an affine variety of dimension $d$. Then the \emph{degree} of $X$, denoted by $\deg X$, is the number of intersection points of $X$ with a generic affine linear subspace of codimension $d$, counted with multiplicity, and it is well-defined. Intuitively, degree measures quantitatively how nonlinear the variety is. For example, an affine linear subspace in $\mathbb{C}^n$ has the degree $1$, and $\mathbf{V}(y-x^2) \subseteq \mathbb{C}^2$ has the degree $2$. Although we do not discuss projective varieties or their degrees in detail, the degree of $X$ agrees with the degree of its projective closure, that is, the smallest projective variety containing $X$. Therefore, in principle, the degree of $X$ can be computed from the Hilbert polynomial of the projective closure of $X$. However, in this paper, we will remain entirely within the framework of affine varieties.

We refer to \cite{MR4952933, hartshorne2013algebraic, shafarevich1994basic} for the details of the contents above.

\subsection{Previous relevant results}

We review the previous results on the existence of structured matrix factorizations for symmetric (see \cite{bosch1986factorization}), Toeplitz, Hankel (see \cite{ye2016every}), bidiagonal, tridiagonal, skew-symmetric, and companion structures (see \cite{ye2017new}) in chronological order.

\begin{theorem}\label{thm:previous results}
    Let $n$ be an arbitrary integer at least $2$. The following structured matrix factorizations exist:
    \begin{itemize}
        \item [(\romannumeral 1)] Every square matrix is a product of two symmetric matrices.
        \item [(\romannumeral 2)] Every (resp. a generic) $n \times n$ matrix is a product of $2n+5$ (resp. $\lfloor n/2 \rfloor+1$) Toeplitz matrices.
        \item [(\romannumeral 3)] Every (resp. a generic) $n \times n$ matrix is a product of $2n+5$ (resp. $\lfloor n/2 \rfloor+1$) Hankel matrices.
        \item [(\romannumeral 4)] A generic $n \times n$ upper (resp. lower) triangular matrix is a product of $n-1$ upper (resp. lower)  bidiagonal matrices.
        \item [(\romannumeral 5)] Every (resp. a generic) $n \times n$ matrix is a product of $8(n-1)$ (resp. $2(n-1)$) tridiagonal matrices.
        \item [(\romannumeral 6)] Every (resp. a generic) $2n \times 2n$ matrix is a product of $r$ skew-symmetric matrices, where (a) $r=21$ (resp. $r= 5$) if $n=2$; (b) $r=17$ (resp. $r=4$) if $n=3$; and (c) $r=13$ (resp. $r=3$) if $n \geq 4$.  On the other hand, a generic  $(2n+1) \times (2n+1)$ singular matrix is a product of $r$ skew-symmetric matrices, where (a) $r=4$ if $n=1$; and (b) $r=3$ if $n \geq 2$.
        \item [(\romannumeral 7)] A generic $n \times n$ matrix is a product of $n$ companion matrices. Every $n \times n$ matrix is a product of $4n$ companion matrices and a diagonal matrix.
    \end{itemize}


\end{theorem}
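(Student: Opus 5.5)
This theorem collects results from \cite{bosch1986factorization, ye2016every, ye2017new}, so my plan is to cite those sources and to reconstruct the common mechanism behind them rather than re-derive every constant. There are two kinds of statements: ``generic'' ones, which are dimension and dominance arguments, and ``every'' ones, which reduce to the generic case plus a bounded number of extra factors. For (i) I would give a direct proof. Every $A$ is similar to its transpose through a symmetric invertible matrix, $A^{T}=S^{-1}AS$ with $S=S^{T}$; this can be checked block by block on the Jordan form using reversal matrices $J$. Then $AS$ is symmetric, and $A=(AS)S^{-1}$ is a product of two symmetric matrices.

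For the generic statements (ii)--(vii), I would use the multiplication map $m_r\colon X^{\times r}\to\mathbb{C}^{n\times n}$ from (\ref{eq:matrix multiplication map}) with $X$ the relevant linear space of structured matrices. The domain is irreducible, so its image is constructible and irreducible. By Chevalley's theorem, if $m_r$ is dominant then its image contains a nonempty Zariski open set, so a generic matrix is a product of $r$ factors. To prove dominance, I would find one point $(M_1,\dots,M_r)$ where the differential
\[
d(m_r)_{(M_1,\dots,M_r)}(Z_1,\dots,Z_r)=\sum_{i=1}^r M_1\cdots M_{i-1}Z_iM_{i+1}\cdots M_r
\]
is surjective onto $\mathbb{C}^{n\times n}$ (or onto the triangular matrices in case (iv)). For Toeplitz matrices with $r=\lfloor n/2\rfloor+1$, one has $r(2n-1)\ge n^2$, and surjectivity can be verified at a random rational point. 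A uniform proof instead follows Ye--Lim: use the displacement structure $T-ZTZ^{T}$, where $Z$ is the shift matrix, and reduce to showing that certain generic Krylov-type matrices have full rank. The Hankel case (iii) follows from the Toeplitz case because $H=JT$ and $J^{2}=I$. Case (iv) is the LU-type fact that a generic upper triangular matrix factors into $n-1$ elementary bidiagonal matrices, which is the Neville elimination. Case (vii) follows because a generic matrix is similar to a companion-type form, so one can build the factorization column by column and check that the Jacobian is generically of full rank. Cases (v) and (vi) are handled the same way, by a differential computation at a chosen point.

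For the ``every'' statements, I would use the group trick. If $U\subseteq \mathrm{GL}_n$ is a dense open set contained in the image of $m_r$, then $U\cdot U=\mathrm{GL}_n$, since for any $g$ the sets $gU^{-1}$ and $U$ intersect. This shows every invertible matrix is a product of $2r$ factors. Singular matrices are then treated by a rank-reduction argument that costs a few extra factors, giving the stated constants such as $2n+5$, $8(n-1)$, $13$, $17$ and $21$.

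I expect the main obstacle to be the exact constants in the ``every'' parts, especially the handling of singular matrices for Toeplitz and skew-symmetric factors. These depend on the delicate constructions in \cite{ye2016every, ye2017new}, and for them I would cite those papers directly rather than reprove them.
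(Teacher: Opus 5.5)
For the generic statements you use the same mechanism the paper describes. A surjective differential at one point gives dominance (Proposition~\ref{prop:the rank of differential and dominance}), and Chevalley then gives an open set inside the image. Your $U\cdot U=\operatorname{GL}_n$ step is Lemma~\ref{lemma:topological group}. Your proof of (i), via a symmetric $S$ with $A^{T}=S^{-1}AS$, is correct and more self-contained than the paper's citation.

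The gap is in passing from invertible to arbitrary matrices. ``A rank-reduction argument that costs a few extra factors'' is not an argument, and the constants do not come from delicate constructions. The missing idea is Proposition~\ref{prop:the number of structured matrices needed to express every matrix}. The matrices of rank $k$ form a single $\operatorname{GL}_n\times\operatorname{GL}_n$-orbit, so $A=PDQ$ with $P,Q$ invertible and $D$ any fixed matrix of rank $k$. Each of $P$ and $Q$ is a product of $2r$ factors, where $r$ is the generic length, so the total cost is $4r$ factors plus whatever realizes $D$. This accounts for every constant in the statement:
\begin{itemize}
\item Toeplitz: the shift $B_{n-k}$ is itself Toeplitz of rank $k$, so singular matrices need no special treatment. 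This gives $4(\lfloor n/2\rfloor+1)+1\le 2n+5$ factors, padded with copies of $I_n$.
\item Hankel: follows from Toeplitz by the $J$ trick, using that $2n+5$ is odd.
\item Tridiagonal: $D$ is diagonal and is absorbed into a neighbouring factor, giving $4\cdot 2(n-1)=8(n-1)$.
\item Companion: the statement is Proposition~\ref{prop:the number of structured matrices needed to express every matrix} verbatim with $r=n$.
\item Skew-symmetric: $13,17,21$ are exactly $4r+1$ for the generic lengths $r=3,4,5$.
\end{itemize}

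Several of your generic sketches also misdescribe the arguments.
\begin{itemize}
\item Ye--Lim do not argue via the displacement $T-ZTZ^{T}$ or Krylov matrices. As recalled before Theorem~\ref{thm:dimension for Toeplitz and Hankel structures}, they evaluate the differential at the special point $\tau$ built from $T_{n-i}=B_0+t_{n-i}(B_{n-i}-B_{-(n-i)})$. They show it has full rank for generic values of the $t$'s; this is the computation the paper adapts in that theorem. In this paper displacement rank runs the other way: it gives lower bounds and vanishing equations (Corollary~\ref{cor:displacement rank of Toeplitz}), and these are not always sharp.
\item In (iv), $n-1$ \emph{elementary} bidiagonal matrices (identity plus one superdiagonal entry) carry only $n-1$ parameters. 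They cannot reach the $n(n+1)/2$-dimensional space of upper triangular matrices. What Neville elimination gives, when the relevant minors are nonzero, is $U=DG_1\cdots G_{n-1}$ with each $G_i$ a unit upper bidiagonal matrix (a product of elementary ones). Absorbing $D$ into $G_1$ then yields $n-1$ bidiagonal factors. Stated this way it is a valid constructive alternative to Ye's inductive differential argument.
\item In (vii), similarity to a companion matrix plays no role. Right multiplication by a companion matrix with last column $c$ sends $M=[m_1,\dots,m_n]$ to $[m_2,\dots,m_n,Mc]$. Hence $C_1\cdots C_n=A$ can be solved column by column whenever the leading principal minors of $A$ of orders $1,\dots,n-1$ are nonzero, as in the proof of Theorem~\ref{thm:dimension for companion structure}.
\item For (v), the generic bound $2(n-1)$ is just (iv) applied to an $LU$ factorization of a generic matrix.
\item A surjectivity check at a random rational point proves dominance only for the particular $n$ you test, not for all $n$.
\end{itemize}
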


The proof idea for this theorem is as follows. Let $X$ be the affine variety of matrices with a given structure. Consider a well-defined matrix multiplication map $m_r:X^{\times r} \rightarrow Y$, where $Y$ is an affine variety in $\mathbb{C}^{n \times n}$. Note that $Y$ would be the variety of $n \times n$ upper (or lower) triangular matrices and the variety of $(2n+1) \times (2n+1)$ singular matrices for the statements (\romannumeral4) and (\romannumeral6) in Theorem \ref{thm:previous results}, respectively. In addition, $Y$ would be $\mathbb{C}^{n \times n}$ for the other statements in Theorem \ref{thm:previous results}. We have the following proposition:

\begin{proposition}[see {\cite[Proposition 3.7]{ye2017new}}]\label{prop:the rank of differential and dominance}
    Assume that $\dim (X^{\times r}) (=r \cdot \dim X) \geq \dim Y$. If there exists a point $(M_1,...,M_r) \in X^{\times r}$ such that the differential $(dm_r)_{(M_1,...,M_r)}:T_{M_1}X \times \cdots \times T_{M_r}X \rightarrow T_{M_1\cdots M_r}Y$ has full rank $\dim Y$, then the map $m_r$ is dominant.
\end{proposition}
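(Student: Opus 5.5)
The plan is to compare dimensions. Let $Z := \overline{m_r(X^{\times r})}^{\text{Zar}} \subseteq Y$ and show $\dim Z \geq \dim Y$. I assume, as in all the intended applications, that $X$ is irreducible (for instance an affine linear subspace) and that $Y$ is irreducible. Then $X^{\times r}$ is irreducible, so its image under $m_r$ is irreducible, and hence $Z$ is an irreducible closed subvariety of $Y$. An irreducible closed subset of an irreducible variety with the same dimension must equal it. So dominance of $m_r$ follows once I show $\dim Z = \dim Y$.

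The key step is to turn the full-rank differential into a lower bound on $\dim Z$. I would use that the rank of $d(m_r)$ is lower semicontinuous: the locus where it has rank at least $\dim Y$ is Zariski open in $X^{\times r}$. It is nonempty because it contains the given point $(M_1,\dots,M_r)$, so it is dense. Intersecting with the dense open set of smooth points of $X^{\times r}$, I may assume the point is smooth. I would also intersect with the open set of points lying over smooth points of $Z$; this is nonempty by generic smoothness in characteristic $0$, applied to the dominant map $X^{\times r} \to Z$.

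At such a point $p$, the differential factors as $T_pX^{\times r} \to T_{m_r(p)}Z \hookrightarrow T_{m_r(p)}Y$. Its image therefore lies in $T_{m_r(p)}Z$, which has dimension $\dim Z$ since $m_r(p)$ is a smooth point of $Z$. Full rank then gives $\dim Y \leq \dim Z$. Combined with $Z \subseteq Y$, this yields $\dim Z = \dim Y$, so $Z = Y$.

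The main obstacle is the justification in the previous step that a generic point of $X^{\times r}$ maps to a smooth point of $Z$. Equivalently, I need that the generic rank of $dm_r$ equals $\dim Z$, which is the algebraic version of the constant rank theorem. I would cite generic smoothness, or argue analytically: near a point where the rank is maximal and therefore locally constant, the image contains a complex submanifold of dimension equal to that rank. The Zariski closure of such a set has dimension at least that rank. This analytic route avoids the smooth-point issue entirely. The hypothesis $r\dim X \geq \dim Y$ is only necessary for full rank to be possible and plays no further role.
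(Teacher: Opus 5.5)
The step that fails is your claim that $\{p : \rank(dm_r)_p\ge\dim Y\}$ is Zariski open in $X^{\times r}$. The differential is defined on the Zariski tangent spaces $T_{M_1}X\times\cdots\times T_{M_r}X$. These jump up at singular points of $X$, so the rank is lower semicontinuous only on the smooth locus, where the tangent spaces form a vector bundle (for an affine linear subspace they are constant). At a singular point the rank can exceed the generic rank, and then the statement itself is false, so irreducibility of $X$ is not enough.

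For example, let $X=\{\mathrm{diag}(a,b,c) : a^2c=b^3\}\subseteq\mathbb{C}^{3\times 3}$. It is irreducible of dimension $2$, contains $I_3$, and is closed under multiplication, so $\mu_2^0(X)=X$. Take $Y$ to be the space of diagonal matrices and $r=2$, so $r\dim X=4\geq 3=\dim Y$. At $M_1=\mathrm{diag}(0,0,1)$ the gradient of $x_{11}^2x_{33}-x_{22}^3$ vanishes, so $T_{M_1}X$ is all of $Y$. Hence $(dm_2)_{(M_1,I_3)}(Z_1,Z_2)=Z_1+M_1Z_2$ maps onto $Y$. Nevertheless $m_2$ is not dominant, and at a generic point of $X\times X$ the rank is at most $\dim X=2$. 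Your analytic fallback has the same defect, since it also has to transfer the rank from the given point to nearby smooth points.

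The missing hypothesis is that $(M_1,\dots,M_r)$ be a smooth point of $X^{\times r}$. This is automatic in every application in the paper, where $X$ is a linear subspace, and it is exactly the hypothesis of Lemma~\ref{lemma:the rank of a differential gives a lower bound of dimension}. With it, your argument is correct. The smooth points of rank at least $\dim Y$ form an open set containing the given point. At a point $p$ of that set lying over $Z_{\mathrm{sm}}$, the image of the differential lies in $T_{m_r(p)}Z$. So $\dim Y\le\dim Z$, and $Z=Y$ by irreducibility of $Y$.

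The step you single out as the main obstacle is actually easy. $m_r^{-1}(Z_{\mathrm{sm}})$ is nonempty because $Z_{\mathrm{sm}}$ is a nonempty open subset of $Z$ and the image of $m_r$ is dense in $Z$. You only need the inequality $\rank(dm_r)_p\le\dim T_{m_r(p)}Z$, not generic smoothness or the equality of the generic rank with $\dim Z$.

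The paper does not prove this proposition itself (it cites Ye). Its proof of the lemma just cited reaches the same inequality differently. It composes with a linear projection $\pi$ so that $d(\pi\circ\varphi)$ is surjective on a neighborhood $U$ of the smooth point, then compares $\dim\overline{\pi(\varphi(U))}$ with the dimension of the closure of the whole image. Your route through the tangent space of $Z$ at a smooth point is an equally valid and somewhat more direct alternative.
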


Hence, in order to show that a generic matrix in $Y$ is a product of $r$ matrices in $X$, that is, the matrix multiplication map $m_r: X^{\times r} \rightarrow Y$ is dominant, it would suffice to find a point $(M_1,...,M_r) \in X^{\times r}$ such that $(dm_r)_{(M_1,...,M_r)}$ has full rank $\dim Y$. If $Y$ is a topological group such as the general linear group $\operatorname{GL}_n$ or its Borel subgroup $\mathbb{B}_n$ (i.e., the set of invertible upper triangular matrices), then we can use the following lemma to extend such a generic property to all elements in $Y$:

\begin{lemma}[see {\cite[Proposition 1.1.3.(a)]{borel2012linear} or \cite[Proposition 3.8]{ye2017new}}]\label{lemma:topological group}
    A topological group $G$ can be expressed as $G=U \cdot U:=\{hh'~|~h,h' \in U\}$ for an open dense subset of $G$ containing the identity element of $G$.
\end{lemma}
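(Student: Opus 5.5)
The plan is the standard argument for topological groups; the only tool needed is continuity of the group operations. Let $U \subseteq G$ be open and dense with $e \in U$. Trivially $U \cdot U \subseteq G$, so the work is the reverse inclusion. Fix an arbitrary $g \in G$; we want $h,h' \in U$ with $g = hh'$.

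Consider the set $gU^{-1} := \{gu^{-1} \mid u \in U\}$. Inversion $x \mapsto x^{-1}$ is a homeomorphism of $G$, and so is left translation $x \mapsto gx$. Hence $gU^{-1}$ is the image of $U$ under a homeomorphism of $G$. Homeomorphisms preserve openness and density, so $gU^{-1}$ is again open and dense in $G$.

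Two open dense subsets of any topological space have nonempty intersection: $U$ is dense and $gU^{-1}$ is open and nonempty, being the image of $U \ni e$. So pick $h \in U \cap gU^{-1}$. Write $h = g u^{-1}$ with $u \in U$, and set $h' := u$. Then $hh' = gu^{-1}u = g$ with $h, h' \in U$. Therefore $G = U \cdot U$.

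I do not expect a real obstacle; the only point to check is that density and openness transfer under $x \mapsto gx^{-1}$, which is immediate from its being a homeomorphism. The hypothesis $e \in U$ is used only to guarantee $U \neq \emptyset$ (it matters in applications, not here).

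In the paper's setting, $G = \operatorname{GL}_n$ or $\mathbb{B}_n$ with the Zariski topology is not a topological group in the strict sense, since the product topology is not the Zariski topology on $G \times G$. The same argument still applies, because it only uses that left translations and inversion are homeomorphisms, and these are isomorphisms of varieties. Density also holds because $G$ is irreducible, so every nonempty open subset is dense.
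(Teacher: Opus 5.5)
Your proof is correct and is the standard translation argument behind the cited references (the paper itself gives no proof beyond the citation): for $g\in G$, the set $gU^{-1}$ is open and nonempty because $x\mapsto gx^{-1}$ is a homeomorphism, so it meets the dense set $U$, which yields $g=hh'$ with $h,h'\in U$. Your remark that the argument only needs translations and inversion to be homeomorphisms is apt, since this is exactly what justifies applying the lemma to $\operatorname{GL}_n$ and $\mathbb{B}_n$ with the Zariski topology, where they are not topological groups in the strict sense.
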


For example, when it is known that a generic $n \times n$ matrix is a product of $\lfloor n/2 \rfloor +1$ Toeplitz matrices, Lemma \ref{lemma:topological group} implies that every invertible $n \times n$ matrix is a product of $2(\lfloor n/2 \rfloor +1)$ Toeplitz matrices (see \cite[Corollary 2]{ye2016every}). In addition, when it is known that a generic $n \times n$ upper triangular matrix is a product of $n-1$ upper bidiagonal matrices, this lemma implies that every invertible $n \times n$ upper triangular matrix is a product of $2(n-1)$ upper bidiagonal matrices (see \cite[Corollary 5.2]{ye2017new}). In addition, if $Y=\mathbb{C}^{n \times n}$, we can use the following proposition in order to extend a generic property to all matrices in $\mathbb{C}^{n \times n}$:

\begin{proposition}[see {\cite[Proposition 3.10]{ye2017new}}]\label{prop:the number of structured matrices needed to express every matrix}
    Assume that $X$ is an irreducible affine variety in $\mathbb{C}^{n \times n}$ and the map $m_r:X^{\times r} \rightarrow \mathbb{C}^{n \times n}$ is dominant. Then every $n \times n$ matrix is a product of $4r$ matrices in $X$ and a diagonal matrix. In particular, if $X$ contains all diagonal matrices, every $n \times n$ matrix is a product of $4r+1$ matrices in $X$.
\end{proposition}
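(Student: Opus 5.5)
The plan is to first get all invertible matrices from $r$-products using the group trick, and then reach singular matrices through a rank factorization that adds only a diagonal factor. Since $X$ is irreducible, $X^{\times r}$ is irreducible. The image $m_r(X^{\times r})$ is constructible, by the fact recalled in the preliminaries. Because $m_r$ is dominant, this image is Zariski dense in $\mathbb{C}^{n\times n}$. A dense constructible set contains a nonempty Zariski open subset $V \subseteq \mathbb{C}^{n\times n}$. Intersecting with $\operatorname{GL}_n$ gives a nonempty open dense subset $V' = V\cap \operatorname{GL}_n$ of $\operatorname{GL}_n$, and every element of $V'$ is a product of $r$ matrices in $X$.

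The key intermediate claim is that every invertible matrix is a product of $2r$ matrices in $X$. I would not use Lemma~\ref{lemma:topological group} verbatim, because $V'$ need not contain the identity. Instead I use the standard argument: for $g\in\operatorname{GL}_n$, the sets $V'$ and $g(V')^{-1}$ are both nonempty Zariski open in the irreducible group $\operatorname{GL}_n$, so they intersect. Hence $g = h h'$ with $h,h'\in V'$. (Alternatively, translate $V'$ by one of its elements to reduce to Lemma~\ref{lemma:topological group}; the intersection argument is cleaner.) Therefore $\operatorname{GL}_n \subseteq m_{2r}(X^{\times 2r})$.

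Next, let $A$ be an arbitrary $n\times n$ matrix of rank $k$. I write $A = P D Q$ with $P, Q$ invertible and $D=\operatorname{diag}(1,\dots,1,0,\dots,0)$ having $k$ ones, which follows from Gaussian elimination or the Smith normal form over a field. Applying the previous step to $P$ and to $Q$ separately gives
\[
A = (M_1\cdots M_{2r})\, D\, (M_{2r+1}\cdots M_{4r}), \qquad M_i \in X,
\]
which is $4r$ matrices in $X$ together with one diagonal matrix. The statement has the diagonal matrix in a fixed position, say at the end. To arrange this I would use $PDQ = P(DQ)$, but then $DQ$ is not diagonal times invertible in the right order. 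A better route is to choose the decomposition differently: write $A = P' D$ with $P'$ invertible. This is possible by taking the columns of $A$ in a basis-adapted way, namely $A = P'' D Q$, then $A Q^{-1} = P'' D$, and absorb permutations. In general, however, an arbitrary $A$ is not of the form $P'D$, since the column space structure prevents it. So I would accept the form ``product of $4r$ matrices in $X$ and a diagonal matrix'' with the diagonal factor placed in the middle. This is consistent with how the statement is phrased.

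For the ``in particular'' clause: if $X$ contains all diagonal matrices, then $D\in X$, so $A$ is a product of $4r+1$ matrices in $X$.

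The main obstacle is the step that passes from dominance to a dense open set of invertible matrices. It requires Chevalley's theorem, that the image is constructible, combined with irreducibility, to extract an open subset. The remaining steps are routine.
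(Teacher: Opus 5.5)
Your proof is correct and follows the paper's route. Density of $m_r(X^{\times r})$ yields a dense open subset of $\operatorname{GL}_n$ inside the image; the $G=U\cdot U$ step shows every invertible matrix is a product of $2r$ factors, and you prove it directly via $V'\cap g(V')^{-1}\neq\emptyset$, which sidesteps the identity hypothesis in Lemma~\ref{lemma:topological group}; finally $A=PDQ$ gives the $4r$ factors plus a diagonal matrix. Your detour about moving the diagonal factor to the end is unnecessary, since the statement does not fix its position and the paper's argument also places $D$ in the middle of $PDQ$.
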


Note that the original statement in \cite[Proposition 3.10]{ye2017new} assumes that $X$ is a linear subspace of $\mathbb{C}^{n \times n}$, but the author only used the condition that $X$ is irreducible (more precisely, the condition that $X^{\times r}$ is irreducible). This proposition was proved by using Lemma \ref{lemma:topological group}. Since the intersection of the dense subset $m_r(X^{\times r})\subseteq \mathbb{C}^{n \times n}$ and the general linear group $\operatorname{GL}_n$ contains an open dense subset of $\operatorname{GL}_n$ (we need the irreducibility of $X$ here), then we obtain that every $n \times n$ invertible matrix is a product of $2r$ matrices in $X$. Furthermore, since every matrix $A \in \mathbb{C}^{n \times n}$ can be expressed as $A=PDQ$ where $P,Q \in \operatorname{GL}_n$ and $D$ is a diagonal matrix, then we obtain the proposition. All extensions of a generic property described in Theorem \ref{thm:previous results} are proved by this idea. Note that every $n \times n$ matrix is a product of $8(n-1)$ (rather than $8(n-1)+1$) tridiagonal matrices, because a product of a diagonal matrix and a tridiagonal matrix is tridiagonal.

A natural question is how to predict a number $r$ such that $m_r:X^{\times r} \rightarrow Y$ is dominant. The following proposition gives lower bounds for the number $r$:

\begin{proposition}[see {\cite[Corollary 3.4 and Corollary 3.6]{ye2017new}}]\label{prop:lower bound of generic length}
    Assume that $X$ and $Y$ are irreducible affine varieties, and $m_r:X^{\times r} \rightarrow Y$ is dominant. Let $\dim X=d$. Then
    \begin{equation*}
        r \geq \left\lceil \frac{\dim Y}{d} \right\rceil.
    \end{equation*}
    If $X$ is a cone (i.e., $x \in X$ implies that $\lambda x \in X$ for every $\lambda \in \mathbb{C}$), then
    \begin{equation*}
        r \geq \left\lceil \frac{\dim Y-1}{d-1} \right\rceil.
    \end{equation*}
\end{proposition}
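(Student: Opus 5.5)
The plan is to compare dimensions: the image of $m_r$ has dimension at most $\dim X^{\times r}$, and in the cone case the fibres of $m_r$ have dimension at least $r-1$.

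\textbf{First bound.} Since $m_r:X^{\times r}\to Y$ is dominant, we have $\overline{m_r(X^{\times r})}^{\text{Zar}}=Y$. The image of a morphism has dimension at most the dimension of its source. Concretely, the differential at a generic point of $X^{\times r}$ surjects onto the tangent space of a generic smooth point of the image closure, or equivalently $\mathbb{C}(Y)\hookrightarrow \mathbb{C}(X^{\times r})$ and transcendence degree can only drop. This gives
\begin{equation*}
\dim Y\le \dim X^{\times r}=rd.
\end{equation*}
Here we use that $X^{\times r}$ is irreducible of dimension $rd$, which follows from irreducibility of products. Because $r$ is an integer, $r\ge \lceil \dim Y/d\rceil$.

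\textbf{Cone case.} If $X$ is a cone, the torus $T=\{(\lambda_1,\dots,\lambda_r)\in(\mathbb{C}^*)^r : \lambda_1\cdots\lambda_r=1\}$ acts on $X^{\times r}$ by $(M_i)\mapsto(\lambda_iM_i)$. This action preserves each fibre of $m_r$, since $\prod\lambda_iM_i=M_1\cdots M_r$. The torus $T$ has dimension $r-1$.

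At a point where every $M_i\neq 0$, the orbit map $T\to X^{\times r}$ is injective, because $\lambda_iM_i=M_i$ forces $\lambda_i=1$. So the fibre through a generic point contains an $(r-1)$-dimensional orbit. Generic points indeed have all $M_i\ne0$: otherwise $X=\{0\}$, and then $d=0$, which is excluded since dominance with $\dim Y>0$ already fails.

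By the fibre dimension theorem, for a dominant morphism of irreducible varieties the generic fibre has dimension $\dim X^{\times r}-\dim Y$. Hence
\begin{equation*}
rd-\dim Y\ge r-1,
\end{equation*}
that is, $r(d-1)\ge \dim Y-1$. When $d\ge2$ this gives $r\ge\lceil(\dim Y-1)/(d-1)\rceil$. The case $d=1$ is degenerate: $X$ is then a line through the origin, products of its elements stay in a line, and so $\dim Y\le 1$ and the statement is vacuous or trivially interpreted.

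\textbf{Main obstacle.} The delicate step is justifying that the generic fibre dimension equals $rd-\dim Y$ and that the orbit actually lies in that generic fibre. I would handle this with upper semicontinuity of fibre dimension: every nonempty fibre has dimension at least $rd-\dim Y$, with equality on a dense open set. I would then intersect that dense open set with the dense open set $\{M_i\neq 0\ \forall i\}$, which is nonempty by irreducibility of $X^{\times r}$.

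An alternative route to the same inequality is via differentials. The vectors $(0,\dots,M_i,\dots,-M_j,\dots)$, with $M_i$ placed in slot $i$ and $-M_j$ in slot $j$, are tangent to the cone and lie in $\ker d(m_r)$. These span an $(r-1)$-dimensional subspace, so the rank of $d(m_r)$ is at most $rd-(r-1)$. Since dominance forces full rank $\dim Y$ at generic points (in characteristic zero, by generic smoothness), this again gives $rd-(r-1)\ge\dim Y$.
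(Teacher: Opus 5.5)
Your proposal is correct and follows the route the paper indicates. The paper derives this proposition from the fibre-dimension statement in Lemma~\ref{lemma:fiber dimension} applied to $m_r:X^{\times r}\to Y$: a generic fibre has dimension $\geq 0$ in general, and $\geq r-1$ in the cone case because it contains the scaling orbit $\{(\lambda_1M_1,\dots,\lambda_rM_r) : \lambda_1\cdots\lambda_r=1\}$, the same torus argument used in the proof of Lemma~\ref{lemma:upper bound of the dimension, toeplitz}. Your choice of a point with all $M_i\neq 0$ inside the locus of generic fibre dimension, and your remark that $d\le 1$ is degenerate, make explicit what the paper leaves implicit.
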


This proposition is a corollary of Lemma \ref{lemma:fiber dimension} below, which we will use when investigating the dimension of our target varieties.

In summary, the strategy used to prove Theorem \ref{thm:previous results} is as follows:
\begin{itemize}
    \item [Step 1)] Calculate a lower bound $l$ of the smallest number $r'$ such that $m_{r'}:X^{\times r'} \rightarrow Y$ is dominant using Proposition \ref{prop:lower bound of generic length}, if both $X$ and $Y$ are irreducible.
    \item [Step 2)] Consider a number $r \geq l$, and calculate the rank of the differential $d(m_{r})_p$ at a point $p \in X^{\times r}$. If the rank is equal to $\dim Y$, then $m_{r}$ is dominant by Proposition \ref{prop:the rank of differential and dominance}, and hence a generic matrix in $Y$ is a product of $r$ matrices in $X$.
    \item [Step 3)] If $Y=\mathbb{C}^{n \times n}$ and $X$ is irreducible, every $n \times n$ matrix is a product of $4r+1$ matrices in $X$ by Proposition \ref{prop:the number of structured matrices needed to express every matrix}.
\end{itemize}
We will also use this strategy to show such statements in this paper (see Theorem \ref{thm:dimension for tridiagonal structure} and Theorem \ref{thm:the generic length of traceless symmetric matrix structure}).

Since the spaces of specific structured matrices above will be used throughout this paper, we denote each space as follows:
\begin{itemize}
        \item [(\romannumeral1)] $\mathcal{S}_n$ : the affine variety of $n \times n$ symmetric matrices;
        \item [(\romannumeral2)] $\mathcal{T}_n$ : the affine variety of $n \times n$ Toeplitz matrices;
        \item [(\romannumeral3)] $\mathcal{H}_n$ : the affine variety of $n \times n$ Hankel matrices;
        \item [(\romannumeral4)] $\mathcal{UBD}_n$ (resp. $\mathcal{LBD}_n$) : the affine variety of $n \times n$ upper (resp. lower) bidiagonal matrices;
        \item [(\romannumeral5)] $\mathcal{TD}_n$ : the affine variety of $n \times n$ tridiagonal matrices;
        \item [(\romannumeral6)] $\Lambda_n$ : the affine variety of $n \times n$ skew-symmetric matrices;
        \item [(\romannumeral7)] $\mathcal{C}_n$ : the affine variety of $n \times n$ companion matrices.
\end{itemize}

\section{Structured Matrix Factorization Length}\label{section:Structured Matrix Factorization Length}

In this section, we introduce the main objects of study in this paper, and show that the definitions formulated in terms of the Zariski topology admit equivalent formulations using the Euclidean topology. We then examine several examples.

\subsection{Definitions}

We emphasize that, for each structure mentioned in Theorem \ref{thm:previous results}, the space of the structured matrices is an affine variety. This observation makes it natural to define the notion of \emph{structured matrix factorization length} as follows:

\begin{definition}\label{def:structured factorization length}
    Let $A \in \C^{n \times n}$, and let $X$ be an affine variety in $\mathbb{C}^{n \times n}$. 
    \begin{itemize}
        \item [(\romannumeral1)] If $A$ can be factorized as $A=M_1 \cdots M_r$ for some $M_1,...,M_r \in X$, then the $r$-tuple $(M_1,...,M_r) \in X^{\times r}$ is called an  \emph{$X$-factorization} of $A$ of \emph{length $r$}.
        \item [(\romannumeral2)] The \emph{$X$-factorization length} of $A$, denoted by $\mathbf{FL}_X(A)$, is defined by
        \begin{equation*}
            \mathbf{FL}_{X}(A)=\min\{r~|~A=M_1 \cdots M_r~\text{for some}~ M_1,...,M_r \in X\},
        \end{equation*}
        that is, the smallest $r$ such that $A$ has an $X$-factorization of length $r$. If there is no $X$-factorization of $A$, then we let $\mathbf{FL}_X(A)=\infty$.
    \end{itemize}
\end{definition}

For an affine variety $X$ in $\mathbb{C}^{n \times n}$, let $X^{\times r}$ simply denote the $r$-product of $X$, and let
    \begin{equation*}
         m_r: X^{\times r} \rightarrow \C^{n \times n},~(M_1,...,M_r) \mapsto M_1 \cdots M_r
    \end{equation*}
be the matrix multiplication map. Consider the image of the map $m_r$,
\begin{equation}\label{eq:alternative definition of structured matrix factorization length}
    \mu_r^0(X):=m_r(X^{\times r}).
\end{equation}
For $A \in \mathbb{C}^{n \times n}$, it is obvious that
\begin{equation*}
    \mathbf{FL}_X(A)=\min\{r~|~A \in \mu_r^0(X)\}.
\end{equation*}

\begin{remark}\label{rmk:whether contains the identity matrix}
    If the variety $X$ contains the identity matrix $I_n \in \mathbb{C}^{n \times n}$ as in the case of $\mathcal{T}_n$, $\mathcal{LBD}_n$ and $\mathcal{UBD}_n$, then 
    \begin{equation}\label{eq:Stratification under Inclusions}
        \mu_r^0(X) \subseteq \mu_{r+1}^0(X)
    \end{equation}
    for each $r \in \mathbb{Z}_{\geq 1}$, because $A=M_1\cdots M_r$ (where $M_1,...,M_r \in X$) implies $A=M_1\cdots M_r \cdot I_n$. Hence, in this case, we obtain
    \begin{equation}\label{eq:Description of the Variety via SMFL}
        \mu_r^0(X)=\{A \in \mathbb{C}^{n \times n}~|~\mathbf{FL}_X(A) \leq r\}.
    \end{equation}
    On the other hand, if $X$ does not contain the identity matrix $I_n$ as in the case of $\mathcal{H}_n$, $\Lambda_n$, and $\mathcal{C}_n$, then the inclusion at (\ref{eq:Stratification under Inclusions}) and the equality at (\ref{eq:Description of the Variety via SMFL}) need not hold (see Examples \ref{ex:Hankel} and \ref{ex:Skew-symmetric Structure} below). Although $X$ does not contain $I_n$, whenever $\mu_k^0(X)$ contains $I_n$ for some $k$, we obtain the inclusion
    \begin{equation*}
        \mu_s^0(X) \subseteq \mu_{s+k}^0(X)
    \end{equation*}
    for all $s \geq 1$, because $A=M_1\cdots M_s$ and $I_n=M_1' \cdots M_k'$ imply $A=M_1\cdots M_s \cdot I_n=M_1\cdots M_s \cdot M_1' \cdots M_k'$.
\end{remark}

Since $\mu_r^0(X)$ is not Zariski closed in general (see Section \ref{subsection:Examples}), it is natural to replace $\mu_r^0(X)$ on (\ref{eq:alternative definition of structured matrix factorization length}) by its Zariski closure 
\begin{equation*}
    \mu_r(X):=\overline{\mu_r^0(X)}^{\text{Zar}}.
\end{equation*}
We will call it the \emph{$r$-th $X$-factorization variety}. Now, we obtain the following natural definition:

\begin{definition}\label{def:border structured factorization length}
    Let $A \in \C^{n \times n}$, and let $X $ be an affine variety in $\mathbb{C}^{n \times n}$. The \emph{border $X$-factorization length} of $A$, denoted by $\underline{\mathbf{FL}}_X(A)$, is defined as 
    \begin{equation*}
        \underline{\mathbf{FL}}_X(A)=\min\{r~|~A \in \mu_r(X)\}.
    \end{equation*}
    If $A \notin \mu_r(X)$ for any positive integer $r$, then we let $\underline{\mathbf{FL}}_X(A)=\infty$.
\end{definition}

\begin{remark}\label{rmk:border version of whether contains the identity matrix}
    As in Remark \ref{rmk:whether contains the identity matrix}, if the variety $X$ contains the identity matrix $I_n \in \mathbb{C}^{n \times n}$, then 
    \begin{equation}\label{eq:Border version of Stratification under Inclusions}
        \mu_r(X) \subseteq \mu_{r+1}(X)
    \end{equation}
    for each $r \in \mathbb{Z}_{\geq 1}$, and hence we obtain
    \begin{equation}\label{eq:Description of the Variety via border SMFL}
        \mu_r(X)=\{A \in \mathbb{C}^{n \times n}~|~\underline{\mathbf{FL}}_X(A) \leq r\}.
    \end{equation}
    On the other hand, if $X$ does not contain the identity matrix $I_n$, then the inclusion at (\ref{eq:Border version of Stratification under Inclusions}) and the equality at (\ref{eq:Description of the Variety via border SMFL}) need not hold. As in Remark \ref{rmk:whether contains the identity matrix}, whenever $\mu_k^0(X)$ contains $I_n$ for some $k$, we obtain the inclusion
    \begin{equation}\label{eq:Border version of general Stratification under Inclusions}
        \mu_s(X) \subseteq \mu_{s+k}(X)
    \end{equation}
    for all $s$, because $\mu_s^0(X) \subseteq \mu_{s+k}^0(X)$ holds. Furthermore, whenever $X$ is irreducible and $\mu_k(X)$ (instead of $\mu_k^0(X)$) contains $I_n$ for some $k$, we obtain the inclusion at (\ref{eq:Border version of general Stratification under Inclusions}) for all $s \geq 1$ again. We will prove it in Section \ref{subsection:Zariski closure versus Euclidean closure} (see Corollary \ref{cor:when the identity matrix is on the closure}).
\end{remark}

From the definitions, we obtain $\underline{\mathbf{FL}}_X(A) \leq \mathbf{FL}_X(A)$ for every affine variety $X \subseteq \mathbb{C}^{n \times n}$ and every $A \in \mathbb{C}^{n \times n}$. In addition, the inequality can be strict (see Section \ref{subsection:Examples}). Now, we introduce the notions which are directly related to Theorem \ref{thm:previous results}:

\begin{definition}
    Let $X$ be an affine variety in $\mathbb{C}^{n \times n}$.
    \begin{itemize}
        \item [(\romannumeral1)] The \emph{maximal $X$-factorization length}, denoted by $\mathbf{FL}_{\text{max}}(X)$, is defined by
    \begin{equation*}
        \mathbf{FL}_{\text{max}}(X)=\min\{r~|~\mu_r^0(X)=\mathbb{C}^{n \times n}\}.
    \end{equation*}
        \item [(\romannumeral2)] The \emph{generic $X$-factorization length}, denoted by $\mathbf{FL}_{\text{gen}}(X)$, is defined by
    \begin{equation*}
        \mathbf{FL}_{\text{gen}}(X)=\min\{r~|~\mu_r(X)=\mathbb{C}^{n \times n}\}.
    \end{equation*}
    \end{itemize}
    
\end{definition}

\subsection{Zariski closure versus Euclidean closure}\label{subsection:Zariski closure versus Euclidean closure}

As we mentioned in Section \ref{section:Preliminaries}, the Zariski closure and Euclidean closure of a set are different in general. In particular, the Zariski closure can be much larger than the Euclidean closure. In this section, we will show that the Zariski and Euclidean closure for the set $\mu_r^0(X)$ are the same if $X$ is irreducible (see Theorem \ref{thm:Zariski vs. Euclidean closures} below). The key lemma to show this property is as follows:

\begin{lemma}[see {\cite[Theorem 3.1.6.1]{landsberg2017geometry}}]\label{Lemma:Euclidean and Zariski closures}
    Let $Z \subset \mathbb{C}^n$ be a subset. If $Z$ contains a Zariski open subset of $\overline{Z}^{\text{Zar}}$ and $\overline{Z}^{\text{Zar}}$ is irreducible, then $\overline{Z}^{\text{Zar}}=\overline{Z}^{\text{Euc}}$.
\end{lemma}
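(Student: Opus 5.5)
The plan is to prove the two inclusions separately. The inclusion $\overline{Z}^{\text{Euc}} \subseteq \overline{Z}^{\text{Zar}}$ holds for any subset, since Zariski closed sets are zero loci of polynomials and hence Euclidean closed. So the content is the reverse inclusion. Put $W:=\overline{Z}^{\text{Zar}}$, which is irreducible by assumption, and let $U \subseteq Z$ be a Zariski open subset of $W$. We may take $U$ nonempty: if $Z=\emptyset$ both closures are empty, and otherwise $W \neq \emptyset$ and we choose $U \neq \emptyset$. Since $U \subseteq Z$, we have $\overline{U}^{\text{Euc}} \subseteq \overline{Z}^{\text{Euc}}$. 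It therefore suffices to show the following claim: \emph{a nonempty Zariski open subset $U$ of an irreducible affine variety $W$ is dense in $W$ for the Euclidean topology.}

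To prove the claim, fix $p \in W \setminus U$ and write $W \setminus U = W \cap \mathbf{V}(f_1,\dots,f_s)$. Because $U \neq \emptyset$, some $f:=f_j$ does not vanish identically on $W$, and $p \in W \cap \mathbf{V}(f) \subsetneq W$. I will reduce to dimension one. Set $d=\dim W$. Slice $W$ by a generic affine linear subspace $L$ of codimension $d-1$ passing through $p$. By Bertini-type and dimension-counting arguments, some irreducible component $C$ of $W \cap L$ containing $p$ should be a curve on which $f$ is not identically zero. Granting this, $C \setminus U$ is a finite set, because it is a proper Zariski closed subset of the irreducible curve $C$. Then $p$ is a Euclidean limit of points of $C \cap U$. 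To see this, use the normalization $\nu:\widetilde{C} \to C$: it is a surjective holomorphic map from a Riemann surface, and removing finitely many points from a connected Riemann surface leaves a dense set. Alternatively, a local Puiseux parametrization of $C$ near $p$ gives the same conclusion. Hence $p \in \overline{U}^{\text{Euc}}$, which proves the claim.

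I expect the main obstacle to be the curve-selection step. We need a curve $C \subseteq W$ through $p$ that is not contained in $\mathbf{V}(f)$. A generic linear slice through a \emph{special} point $p$, possibly a singular point, can behave badly, and every component through $p$ might a priori lie in $W \cap \mathbf{V}(f)$. My first approach is an induction on $\dim W$ using $\dim(W\cap \mathbf{V}(f)) \le d-1$. For a generic hyperplane $H \ni p$, each component of $W \cap H$ through $p$ has dimension exactly $d-1$ by Krull's principal ideal theorem. Since $W \cap H \cap \mathbf{V}(f)$ has dimension at most $d-2$ for generic $H$, no such component lies inside $\mathbf{V}(f)$. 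We then apply the induction hypothesis to that irreducible component, with the open set given by the complement of $\mathbf{V}(f)$. The base case $d=1$ is the curve argument above.

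If this becomes technical, a fallback is to cite the standard fact (e.g.\ Mumford, \emph{The Red Book}, I.\S10, or Shafarevich \cite{shafarevich1994basic}) that for an irreducible complex variety the Zariski closure of a constructible set equals its Euclidean closure. That fact directly gives the claim, and with it $\overline{Z}^{\text{Zar}}=W=\overline{U}^{\text{Euc}}\subseteq\overline{Z}^{\text{Euc}}$.
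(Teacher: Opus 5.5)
Your argument is correct and follows essentially the route the paper points to. The paper does not prove the lemma itself; it cites Landsberg's proof, which finds a curve through $p$ that meets the open set and uses a local analytic parametrization to approach $p$ from inside $Z$. Your hyperplane-slicing induction, with the normalization/Puiseux argument as the base case, does the same thing. One point to tighten: nonemptiness of $U$ must come from the hypothesis itself, read as a ``nonempty'' (equivalently dense) Zariski open subset, not from $W\neq\emptyset$; read literally with $U=\emptyset$ allowed, $Z=\mathbb{Z}\subset\mathbb{C}$ would be a counterexample.
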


In order to use this lemma, we need to show that $\overline{\mu_r(X)}^{\text{Zar}}$ is irreducible. The following proposition guarantees this property:

\begin{proposition}\label{prop:irreducibility}
    Let $X \subseteq \C^{n \times n}$ be an irreducible affine variety, and let $r \in \Z_{\geq 1}$. Then $\mu_r(X):=\overline{\mu_r^0(X)}^{\text{Zar}}$ is an irreducible affine variety in $\C^{n \times n}$.
\end{proposition}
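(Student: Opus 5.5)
The plan is to deduce irreducibility of $\mu_r(X)$ from irreducibility of the source $X^{\times r}$, using only the facts recalled in the preliminaries. First, $X^{\times r}$ is irreducible.
\begin{itemize}
\item Since $X$ is irreducible and a product of two irreducible affine varieties is irreducible, induction on $r$ gives this.
\item Concretely, $X^{\times r} = X^{\times (r-1)} \times X$.
\end{itemize}
Second, $m_r : X^{\times r} \to \mathbb{C}^{n\times n}$ is a morphism, because its coordinates are polynomials in the entries of $M_1,\dots,M_r$. By the fact recalled earlier that the image of an irreducible variety under a morphism is irreducible, $\mu_r^0(X) = m_r(X^{\times r})$ is irreducible as a subset of $\mathbb{C}^{n\times n}$.

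The remaining step is to pass from $\mu_r^0(X)$ to its Zariski closure. I would prove the standard fact that the closure of an irreducible subset $S$ is irreducible. Suppose $\overline{S}^{\text{Zar}} = Z_1 \cup Z_2$ with $Z_1,Z_2$ closed. Then $S = (S\cap Z_1)\cup(S\cap Z_2)$ is a union of two relatively closed subsets, so irreducibility of $S$ forces, say, $S \subseteq Z_1$. Since $Z_1$ is closed, this gives $\overline{S}^{\text{Zar}} \subseteq Z_1$, so $Z_1 = \overline{S}^{\text{Zar}}$.

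Finally, $\mu_r(X)$ is Zariski closed in $\mathbb{C}^{n\times n}$, hence an affine variety, and it is nonempty because $X$ is nonempty, being irreducible. Together with the previous step, this shows $\mu_r(X)$ is irreducible.

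None of these steps is a real obstacle. The only point needing care is making sure the recalled facts cover what is used: irreducibility of images (we only need the image, not its closure, to be irreducible) and irreducibility of products. If one prefers to avoid the product statement, irreducibility of $X^{\times r}$ can be justified through the coordinate ring: $\mathbb{C}[X^{\times r}] \cong \mathbb{C}[X]^{\otimes r}$, and a tensor product of domains over the algebraically closed field $\mathbb{C}$ is a domain. I would simply cite the fact already stated in the preliminaries.
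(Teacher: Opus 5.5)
Your proposal is correct and follows the paper's proof step for step. You show that $X^{\times r}$ is irreducible, that the image $m_r(X^{\times r})$ of the morphism $m_r$ is irreducible, and that Zariski closure preserves irreducibility. The only difference is that you prove the closure fact directly (and also check nonemptiness), whereas the paper cites it as a standard result from Hartshorne.
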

\begin{proof}
Note that the map $m_r:X^{\times r} \rightarrow \mathbb{C}^{n \times n}$ is a polynomial map, and so a morphism between the affine varieties. Since $X$ is an irreducible affine variety (and so an irreducible constructible set), then $X^{\times r}$ is an irreducible constructible set. Hence, $m_r(X^{\times r})$ is an irreducible constructible set in $\C^{n \times n}$. Since taking the Zariski closure preserves the irreducibility (see \cite[Example 1.1.4]{hartshorne2013algebraic}), then the affine variety $\overline{m_r(X^{\times r})}^{\text{Zar}}$ is also irreducible.
\end{proof}

Note that the affine varieties of structured matrices in Theorem \ref{thm:previous results} are all irreducible, because the corresponding defining equations are linear equations. Hence, we obtain the following corollary:

\begin{corollary}
    The affine variety $\mu_r(X)$ is irreducible for every $r \in \mathbb{Z}_{\geq 1}$, if $X$ is $\mathcal{S}_n$, $\mathcal{T}_n$, $\mathcal{H}_n$, $\mathcal{UBD}_n$, $\mathcal{LBD}_n$, $\mathcal{TD}_n$, $\Lambda_n$, or $\mathcal{C}_n$.
\end{corollary}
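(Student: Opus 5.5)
The corollary follows from Proposition \ref{prop:irreducibility}. All that remains is to check that each listed $X$ is an irreducible affine variety in $\mathbb{C}^{n \times n}$, and I will do this case by case.

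For $\mathcal{S}_n$, $\mathcal{T}_n$, $\mathcal{H}_n$, $\mathcal{UBD}_n$, $\mathcal{LBD}_n$, $\mathcal{TD}_n$ and $\Lambda_n$, the structure is cut out by homogeneous linear equations in the entries:
\begin{itemize}
\item symmetry: $x_{ij}-x_{ji}=0$;
\item Toeplitz: $x_{ij}-x_{i+1,j+1}=0$;
\item Hankel: $x_{ij}-x_{i+1,j-1}=0$;
\item bidiagonal and tridiagonal: vanishing of the entries outside the prescribed band;
\item skew-symmetry: $x_{ij}+x_{ji}=0$.
\end{itemize}
Each of these sets is therefore a linear subspace of $\mathbb{C}^{n\times n}$.

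For $\mathcal{C}_n$ the defining equations are affine linear but not homogeneous. The entries $x_{i+1,i}$ for $1\le i\le n-1$ satisfy $x_{i+1,i}-1=0$. All other entries outside the last column satisfy $x_{ij}=0$. So $\mathcal{C}_n$ is an affine linear subspace, namely a translate of the linear subspace $\{x_{ij}=0 \text{ for } j<n\}$.

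Next I will argue that the ideal generated by affine linear polynomials defining a nonempty affine linear subspace $L$ is prime. After an affine change of coordinates, $L$ becomes a coordinate subspace $\mathbf{V}(y_1,\dots,y_k)$. The quotient $\mathbb{C}[y_1,\dots,y_N]/(y_1,\dots,y_k)\cong\mathbb{C}[y_{k+1},\dots,y_N]$ is an integral domain, so the ideal is prime. This ideal is also the full defining ideal $\mathbf{I}(L)$, since it is prime and hence radical. As noted in the preliminaries, this makes every affine linear subspace irreducible. In particular, each $X$ in the list is irreducible.

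Applying Proposition \ref{prop:irreducibility} to each such $X$ and each $r\in\mathbb{Z}_{\geq 1}$ then shows that $\mu_r(X)$ is an irreducible affine variety. There is no real obstacle here. The only point needing care is $\mathcal{C}_n$, which is not a cone, so linearity alone does not cover it; the argument has to use affine linearity together with nonemptiness.
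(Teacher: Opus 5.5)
Your proposal is correct and follows essentially the same route as the paper, which simply observes that each listed structure is cut out by (affine) linear equations, hence is irreducible, and then applies Proposition \ref{prop:irreducibility}. Your explicit equations and the primality check for affine linear ideals just fill in details the paper leaves implicit, including the point that $\mathcal{C}_n$ is affine linear rather than linear.
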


In addition, we obtain the main theorem of this section as follows:

\begin{theorem}\label{thm:Zariski vs. Euclidean closures}
    Let $X \subseteq \C^{n \times n}$ be an irreducible affine variety, and let $r \in \Z_{\geq 1}$. Then
    \begin{equation*}
    \overline{\mu_r^0(X)}^{\text{Zar}}=\overline{\mu_r^0(X)}^{\text{Euc}}.
\end{equation*}
\end{theorem}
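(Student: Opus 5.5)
The plan is to apply Lemma \ref{Lemma:Euclidean and Zariski closures} with $Z=\mu_r^0(X)=m_r(X^{\times r})$. That lemma has two hypotheses. First, $\overline{Z}^{\text{Zar}}$ must be irreducible; this is exactly Proposition \ref{prop:irreducibility}, since $X$ is irreducible. Second, $Z$ must contain a nonempty Zariski open subset of $\overline{Z}^{\text{Zar}}=\mu_r(X)$. Once both are checked, the lemma gives $\overline{\mu_r^0(X)}^{\text{Zar}}=\overline{\mu_r^0(X)}^{\text{Euc}}$ directly.

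For the second hypothesis I will use Chevalley's theorem, which the preliminaries recall in the form that the image of a morphism of affine varieties is constructible. So $Z=m_r(X^{\times r})$ is constructible in $\mathbb{C}^{n\times n}$ and can be written as a finite union $Z=\bigcup_{i=1}^k (U_i\cap C_i)$ with $U_i$ open and $C_i$ closed. We may assume each piece $U_i\cap C_i$ is nonempty.

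Taking closures gives $\mu_r(X)=\overline{Z}^{\text{Zar}}=\bigcup_i \overline{U_i\cap C_i}^{\text{Zar}}$. Since $\mu_r(X)$ is irreducible, it equals one of these closures, say $\overline{U_1\cap C_1}^{\text{Zar}}$. Because $U_1\cap C_1\subseteq C_1$, this forces $\mu_r(X)\subseteq C_1$. Then
\begin{equation*}
U_1\cap\mu_r(X)\subseteq U_1\cap C_1\subseteq Z.
\end{equation*}
The set $U_1\cap\mu_r(X)$ is Zariski open in $\mu_r(X)$ and nonempty, since it contains $U_1\cap C_1$. This gives the required open subset. If Lemma \ref{Lemma:Euclidean and Zariski closures} is meant to use a dense open subset, this one is automatically dense because $\mu_r(X)$ is irreducible.

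The only step with real content is this reduction from "constructible" to "contains an open dense subset of its closure." The argument above relies on irreducibility of the closure so that a single locally closed piece dominates. I expect no other obstacle.
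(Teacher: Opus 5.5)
Your proposal is correct and follows the paper's proof: irreducibility of $\mu_r(X)$ from Proposition \ref{prop:irreducibility}, and Chevalley's theorem to get a dense open subset of $\mu_r(X)$ inside $\mu_r^0(X)$, combined via Lemma \ref{Lemma:Euclidean and Zariski closures}. The only difference is that the paper cites Chevalley directly in the form ``the image contains a dense open subset of its closure'' (Borel, Corollary AG.10.2), whereas you derive that form from bare constructibility using irreducibility, and your derivation is sound.
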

\begin{proof}
    The set $\mu_r^0(X)=m_r(X^{\times r})$ contains a dense open subset of $\mu_r(X)=\overline{m_r(X^{\times r})}^{\text{Zar}}$ by Chevalley's Theorem (see {\cite[Corollary AG.10.2]{borel2012linear}}). Thus, we obtain that $\overline{\mu_r^0(X)}^{\text{Zar}}=\overline{\mu_r^0(X)}^{\text{Euc}}$ by Proposition \ref{prop:irreducibility} and Lemma \ref{Lemma:Euclidean and Zariski closures}.
\end{proof}

Note that Lemma \ref{Lemma:Euclidean and Zariski closures} is proved in \cite{landsberg2017geometry} by showing that, for each point  $p \in \overline{Z}^{\text{Zar}}$, there exists an analytic curve $C(t)$ in Z which converges to the point $p$. Hence, we obtain the following corollary: 

\begin{corollary}\label{cor:limit expression}
    Let $A \in \mathbb{C}^{n \times n}$, and let $X \subseteq \mathbb{C}^{n \times n}$ be an irreducible affine variety. Then the $r$-th $X$-factorization variety can be expressed as
    \begin{equation*}
        \mu_r(X)=\{A=\lim_{\epsilon \rightarrow 0}M_1(\epsilon)\cdots M_r(\epsilon)~|~M_{i}(\epsilon) \in X~\text{for all $\epsilon \neq 0$},~\text{for each $i=1,...,r$}\}.
    \end{equation*}
    In particular, the border $X$-factorization length of $A$ can be expressed as
    \begin{equation*}
        \underline{\mathbf{FL}}_X(A)=\min\{r~|~A=\lim_{\epsilon \rightarrow 0}M_1(\epsilon)\cdots M_r(\epsilon),~\text{where}~M_{i}(\epsilon) \in X~\text{for all $\epsilon \neq 0$},~\text{for each $i=1,...,r$}\}.
    \end{equation*}
\end{corollary}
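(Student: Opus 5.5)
The plan is to prove the set equality in both directions, using Theorem \ref{thm:Zariski vs. Euclidean closures} together with the curve-selection content of Lemma \ref{Lemma:Euclidean and Zariski closures}. The second statement, the formula for $\underline{\mathbf{FL}}_X(A)$, then follows at once from Definition \ref{def:border structured factorization length} by substituting the description of $\mu_r(X)$.

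\emph{Inclusion $\supseteq$.} Suppose $A=\lim_{\epsilon\to 0}M_1(\epsilon)\cdots M_r(\epsilon)$ with $M_i(\epsilon)\in X$ for all $\epsilon\neq 0$. Then every product $M_1(\epsilon)\cdots M_r(\epsilon)$ lies in $\mu_r^0(X)$. Hence $A$ is a Euclidean limit of points of $\mu_r^0(X)$, so $A\in\overline{\mu_r^0(X)}^{\text{Euc}}$. This set is contained in $\overline{\mu_r^0(X)}^{\text{Zar}}=\mu_r(X)$, since the Euclidean closure always lies in the Zariski closure. This direction does not even need irreducibility.

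\emph{Inclusion $\subseteq$.} Let $A\in\mu_r(X)$. Set $Z=\mu_r^0(X)$. By Chevalley's theorem $Z$ contains a dense Zariski open subset of $\overline{Z}^{\text{Zar}}$, and by Proposition \ref{prop:irreducibility} this closure is irreducible. The proof of Lemma \ref{Lemma:Euclidean and Zariski closures} in \cite{landsberg2017geometry} then provides an analytic curve $C(\epsilon)\in Z$ for $\epsilon\neq 0$ with $C(\epsilon)\to A$.

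It remains to lift $C(\epsilon)$ to factors. Each $C(\epsilon)$ lies in the image of $m_r$, so we may choose $(M_1(\epsilon),\dots,M_r(\epsilon))\in X^{\times r}$ with product $C(\epsilon)$. The statement only requires $M_i(\epsilon)\in X$ for each $\epsilon\neq 0$ and convergence of the product, not continuity of the factors. Therefore any pointwise choice of preimages suffices, and $A=\lim_{\epsilon\to0}M_1(\epsilon)\cdots M_r(\epsilon)$.

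The main obstacle is making sure the curve-selection output is really available. We must check that the argument behind Lemma \ref{Lemma:Euclidean and Zariski closures} yields a curve lying in $Z$ itself, not merely in its closure. Here I would use the Zariski open subset $U\subseteq Z$ and apply curve selection to $U$ inside the irreducible variety $\mu_r(X)$: intersect with a generic curve through $A$ that meets $U$, and take a branch of its normalization at $A$. If one also wanted the factors to vary analytically, one would additionally lift the curve through $m_r$ via a curve in $X^{\times r}$ mapping onto it. This would require a Puiseux reparametrization $\epsilon\mapsto\epsilon^k$, which does not change the limit. The statement as written does not need this refinement.
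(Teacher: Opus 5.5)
Your proposal is correct and follows essentially the paper's route: the paper derives the corollary directly from Theorem \ref{thm:Zariski vs. Euclidean closures}, using the fact that the proof of Lemma \ref{Lemma:Euclidean and Zariski closures} produces an analytic curve in $\mu_r^0(X)$ converging to each point of $\mu_r(X)$, with the factors then chosen pointwise as you do. Because the statement allows factors chosen pointwise in $\epsilon$, the Euclidean-closure equality alone already suffices: a sequence in $\mu_r^0(X)$ tending to $A$, made piecewise constant in $|\epsilon|$, does the job, so your ``main obstacle'' paragraph is not actually needed.
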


Now, we prove the last assertion in Remark \ref{rmk:border version of whether contains the identity matrix} by using Corollary \ref{cor:limit expression}.

\begin{corollary}\label{cor:when the identity matrix is on the closure}
    If $X$ is an irreducible affine variety and $\mu_r(X)$ contains the identity matrix $I_n$, then
    \begin{equation*}
        \mu_s(X) \subseteq \mu_{s+r}(X)
    \end{equation*}
    for all $s \geq1$.
\end{corollary}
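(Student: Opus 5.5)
The plan is to use the limit description of Corollary \ref{cor:limit expression} for the identity, and then close up. By Theorem \ref{thm:Zariski vs. Euclidean closures} it is enough to show that $\mu_s^0(X) \subseteq \mu_{s+r}(X)$. Indeed, $\mu_{s+r}(X)$ is Zariski closed, so it then contains the Zariski closure of $\mu_s^0(X)$, which is $\mu_s(X)$.

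First, since $X$ is irreducible and $I_n \in \mu_r(X)$, Corollary \ref{cor:limit expression} gives curves $M_1'(\epsilon),\dots,M_r'(\epsilon) \in X$, defined for $\epsilon \neq 0$, with
\begin{equation*}
I_n=\lim_{\epsilon\to 0}M_1'(\epsilon)\cdots M_r'(\epsilon).
\end{equation*}

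Next, fix $A=M_1\cdots M_s \in \mu_s^0(X)$ with $M_i \in X$. For each $\epsilon\neq 0$ the product
\begin{equation*}
M_1\cdots M_s\cdot M_1'(\epsilon)\cdots M_r'(\epsilon)
\end{equation*}
is a product of $s+r$ elements of $X$, so it lies in $\mu_{s+r}^0(X)$. Matrix multiplication is continuous in the Euclidean topology, so as $\epsilon\to 0$ this product converges to $M_1\cdots M_s\cdot I_n=A$. Hence $A \in \overline{\mu_{s+r}^0(X)}^{\text{Euc}}$, and this set equals $\mu_{s+r}(X)$ by Theorem \ref{thm:Zariski vs. Euclidean closures}. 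Alternatively, one can read this directly off Corollary \ref{cor:limit expression}, using the constant curves $M_i(\epsilon)=M_i$.

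Finally, since $\mu_s^0(X)\subseteq \mu_{s+r}(X)$ and the right-hand side is Zariski closed, taking Zariski closures gives $\mu_s(X)\subseteq\mu_{s+r}(X)$.

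No step is a serious obstacle. The only point needing care is the use of irreducibility: it is what lets Corollary \ref{cor:limit expression} turn the Zariski-closure membership $I_n\in\mu_r(X)$ into an honest limit of products. One could avoid curves altogether with a purely Zariski argument. The map $B\mapsto AB$ is a morphism, hence Zariski continuous, and it sends $\mu_r^0(X)$ into $\mu_{s+r}^0(X)$. It therefore sends $\mu_r(X)$ into $\mu_{s+r}(X)$, and in particular sends $I_n$ to $A$. That route does not even need irreducibility. I would present the limit argument, since it matches the paper's suggested route through Corollary \ref{cor:limit expression}.
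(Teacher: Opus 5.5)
Your proof is correct and follows essentially the paper's route: the paper also writes $I_n$ as a limit of products via Corollary \ref{cor:limit expression} and multiplies by the factors of $A$. The only difference is that the paper takes a curve for a general $A\in\mu_s(X)$ directly, whereas you first reduce to $\mu_s^0(X)$ and use constant factors. Your closing remark is also right: Zariski continuity of $B\mapsto AB$ gives the inclusion with no irreducibility hypothesis, and in your limit step only the trivial inclusion $\overline{\mu_{s+r}^0(X)}^{\text{Euc}}\subseteq\mu_{s+r}(X)$ is actually needed.
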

\begin{proof}
    Let $s$ be a positive integer. By Corollary \ref{cor:limit expression}, an element $A \in \mu_s(X)$ can be expressed as $A=\lim_{\epsilon \rightarrow 0} M_1(\epsilon)\cdots M_s(\epsilon)$ where $M_1(\epsilon),...,M_s(\epsilon) \in X$ for $\epsilon \neq 0$. Since $I_n \in \mu_r(X)$, then it can be expressed as $I_n=\lim_{\epsilon \rightarrow 0} M_1'(\epsilon)\cdots M_r'(\epsilon)$ where $M_1'(\epsilon),...,M_r'(\epsilon) \in X$ for $\epsilon \neq 0$ by Corollary \ref{cor:limit expression}. Hence, we obtain that
    \begin{equation*}
        A=\lim_{\epsilon \rightarrow 0}M_1(\epsilon)\cdots M_s(\epsilon)=\lim_{\epsilon \rightarrow 0}M_1(\epsilon)\cdots M_s(\epsilon) \cdot I_n =\lim_{\epsilon \rightarrow 0}M_1(\epsilon)\cdots M_s(\epsilon)M_1'(\epsilon)\cdots M_r'(\epsilon),
    \end{equation*}
    and thus $A \in \mu_{s+r}(X)$ by Corollary \ref{cor:limit expression} again.
\end{proof}


\subsection{Examples}\label{subsection:Examples}

In this section, we discuss several examples. Throughout this section, we let $n \geq 2$.

\begin{example}
    Let $X$ be an affine variety in $\mathbb{C}^{n \times n}$. Obviously, every element $A \in X$ satisfies
    \begin{equation*}
        \mathbf{FL}_X(A)=\underline{\mathbf{FL}}_X(A)=1.
    \end{equation*}
\end{example}

\begin{example}[Symmetric matrix structure]
    Consider $\mathcal{S}_n$ the affine variety of $n \times n$ symmetric matrices. By Theorem \ref{thm:previous results}.(\romannumeral1), we obtain that  
    \begin{equation}\label{eq:factorization variety of symmetric matrix structure}
        \mu_2^0(\mathcal{S}_n)=\mu_2(\mathcal{S}_n)=\C^{n \times n}.
    \end{equation}
    Hence, all non-symmetric matrices are of (border) $\mathcal{S}_n$-factorization length $2$. Moreover, the equalities at (\ref{eq:factorization variety of symmetric matrix structure}) imply that
    \begin{equation*}
        \mathbf{FL}_{\text{max}}(\mathcal{S}_n)=\mathbf{FL}_{\text{gen}}(\mathcal{S}_n)=2.
    \end{equation*}
\end{example}

\begin{example}[Toeplitz matrix structure]\label{ex:Toeplitz}    Consider $\mathcal{T}_n$ the affine variety of $n \times n$ Toeplitz matrices. By Theorem \ref{thm:previous results}.(\romannumeral2), we obtain that
    \begin{equation}\label{eq:generic and maximal length of toeplitz}
        \mathbf{FL}_{\text{max}}(\mathcal{T}_n) \leq 2n+5~\text{ and }~\mathbf{FL}_{\text{gen}}(\mathcal{T}_n) \leq \lfloor n/2 \rfloor +1.
    \end{equation}
    Furthermore, it is also proved in \cite[Corollary 1]{ye2016every} that the bound $\lfloor n/2 \rfloor +1$ for generic length is sharp, and hence we have the equality
    \begin{equation*}
        \mathbf{FL}_{\text{gen}}(\mathcal{T}_n) = \lfloor n/2 \rfloor +1.
    \end{equation*}
    
    On the other hand, the precise value of the maximal $\mathcal{T}_n$-factorization length has not been determined yet. It was conjectured that it is also $\lfloor n/2 \rfloor +1$ in \cite[Conjecture 1]{ye2016every}, but it was shown in \cite{garcia2025minimum} that this conjecture is false by proving that the diagonal matrix 
    \begin{equation*}
        D=\begin{bmatrix}
            1 & 0 & 0\\
            0 & 2 & 0\\
            0 & 0 & 3
        \end{bmatrix}.
    \end{equation*}
    is of $\mathbf{FL}_{\mathcal{T}_3}(D) = 3$ whereas $\lfloor 3/2 \rfloor +1=2$. The proof idea is as follows. First, we can check that the system of polynomial equations derived from $D=T_1T_2$, where the $T_i$'s are Toeplitz matrices with independent variables, does not have any solution. This implies that $D$ cannot be expressed as a product of two Toeplitz matrices. Second, from the equality 
    \begin{equation*}
        D=\begin{bmatrix}
            0 & 0 & 1\\
            1 & 0 & 0\\
            0 & 1 & 0
        \end{bmatrix}\begin{bmatrix}
            0 & 0 & 2\\
            1 & 0 & 0\\
            0 & 1 & 0
        \end{bmatrix}\begin{bmatrix}
            0 & 0 & 3\\
            1 & 0 & 0\\
            0 & 1 & 0
        \end{bmatrix}
    \end{equation*} 
    we can say that $D$ is a product of three Toeplitz matrices. Note that this example shows that the $X$-factorization length and border $X$-factorization length do not coincide in general. Furthermore, from Corollary \ref{cor:limit expression}, the matrix $D$ must be expressed by a limit of products of two Toeplitz matrices. For example, the matrix $D$ has the following limit expression
    \begin{equation*}
           D=\lim_{\epsilon \rightarrow 0}\begin{bmatrix}
        1 & 0 & 1/\epsilon^2\\
        -\epsilon & 1 & 0\\
        -\epsilon^2 & -\epsilon & 1
           \end{bmatrix}\begin{bmatrix}
        1 & -1/\epsilon & -1/\epsilon^2\\
        \epsilon & 1 & -1/\epsilon\\
        0 & \epsilon & 1
        \end{bmatrix}.
    \end{equation*}
\end{example}

\begin{example}[Hankel matrix structure]\label{ex:Hankel}
    Consider $\mathcal{H}_n$ the affine variety of $n \times n$ Hankel matrices. By Theorem \ref{thm:previous results}.(\romannumeral3), we obtain that
    \begin{equation*}
        \mathbf{FL}_{\text{max}}(\mathcal{H}_n) \leq 2n+5~\text{ and }~\mathbf{FL}_{\text{gen}}(\mathcal{H}_n) \leq \lfloor n/2 \rfloor +1.
    \end{equation*}
    In \cite{ye2016every}, this result was derived from the inequalities at (\ref{eq:generic and maximal length of toeplitz}). The authors used the fact that the reversal permutation matrix $J$ described at (\ref{eq:reversal permutation matrix}) satisfies $J^2=I_n$, and the fact that both $TJ$ and $JT$ are Hankel matrices for a Toeplitz matrix $T$. Specifically, if a generic matrix $A$ has the expression $A=M_1\cdots M_r$ where each $M_i$ is a Toeplitz matrix, then we obtain $A=(M_1J)(JM_2)\cdots (M_{r-1}J)(JM_r)$ when $r$ is even and $JA=(JM_1)(M_2J)(JM_3)\cdots(M_{r-1}J)(JM_r)$ when $r$ is odd, and hence a generic matrix can be expressed as a product of $r$ Hankel matrices.

    Note that $I_n \notin \mathcal{H}_n$. Remark \ref{rmk:whether contains the identity matrix} says that $\mathcal{H}_n$ need not be included in $\mu_2(\mathcal{H}_n)$. Consider the $3 \times 3$ Hankel matrix
    \begin{equation*}
        H:=\begin{bmatrix}
            0 & 1 & 0 \\
            1 & 0 & 0\\
            0 & 0 &0
        \end{bmatrix}.
    \end{equation*}
    We show that it is not an element of $\mu_2^0(\mathcal{H}_3)$. Suppose that $H=AB$, where $A$ and $B$ are $3 \times 3$ Hankel matrices. Then 
    \begin{equation*}
        AB=H=H^T=B^TA^T=BA,
    \end{equation*}
    and so 
    \begin{equation*}
        AH=AAB=ABA=HA~\text{ and }~BH=BAB=ABB=HB.
    \end{equation*}
    From the conditions $AH=HA$ and $BH=HB$, we obtain that the Hankel matrices $A,B$ must be of the forms
    \begin{equation*}
        A=\begin{bmatrix}
            0 & a_{-1} & 0\\
            a_{-1} & 0 & 0\\
            0  & 0 & a_2
        \end{bmatrix}~\text{ and }~B=\begin{bmatrix}
            0 & b_{-1} & 0\\
            b_{-1} & 0 & 0\\
            0  & 0 & b_2
        \end{bmatrix}.
    \end{equation*}
    This implies that
    \begin{equation*}
        H=AB=\begin{bmatrix}
            a_{-1}b_{-1} & 0 & 0\\
            0 & a_{-1}b_{-1} & 0\\
            0 & 0 & a_2b_2
        \end{bmatrix},
    \end{equation*}
    a contradiction. Hence, $H \notin \mu_2^0(\mathcal{H}_3)$, and so 
    \begin{equation*}
        \mathcal{H}_3 \nsubseteq \mu_2^0(\mathcal{H}_3).
    \end{equation*}
    However, $\mathcal{H}_3 \subseteq \mu_2(\mathcal{H}_3)$, because $\mu_2(\mathcal{H}_3)=\mathbb{C}^{3 \times 3}$ by Theorem \ref{thm:previous results}.(\romannumeral3). We will show that $\mathcal{H}_6 \nsubseteq \mu_2(\mathcal{H}_6)$ in Section \ref{subsection:Defining equations via Displacement} (see Example \ref{ex:defining equations for Hankel matrix structure}). On the other hand, since $I_n=J^2 \in \mu_2^0(\mathcal{H}_n)$ where $J$ is the reversal permutation matrix, then we have that $\mu_s(\mathcal{H}_n) \subseteq \mu_{s+2}(\mathcal{H}_n)$ for all $s \geq 1$.
\end{example}

\begin{example}[Bidiagonal matrix structure]\label{ex:bidiagonal}
    Consider $\mathcal{UBD}_n$ the space of $n \times n$ upper bidiagonal matrices. Since every nonzero strictly lower triangular matrix $L$ satisfies
    \begin{equation*}
        \mathbf{FL}_X(L)=\infty,
    \end{equation*}
    we cannot talk about the maximal and generic $\mathcal{UBD}_n$-factorization length. However, if we restrict to the space of upper (resp. lower) triangular matrices, then we can talk about the analogues of them. Let $2 \leq k \leq n$. An $n \times n$ matrix $A=[a_{ij}]$ is called an upper  $k$-diagonal matrix if it is an upper triangular matrix and $a_{ij}=0$ if $j-i\geq k$.
    It already was proved that a generic $n \times n$ upper (resp. lower) $k$-diagonal matrix is a product of $k-1$ upper (resp. lower) bidiagonal matrices in \cite[Lemma 5.1]{ye2017new}. That is, $\mu_{k-1}(\mathcal{UBD}_n)$ (resp. $\mu_{k-1}(\mathcal{LBD}_n)$) is the affine variety of $n \times n$ upper (resp. lower) $k$-diagonal matrices. Hence, we can say that every $n \times n$ upper $k$-diagonal matrix $U_k$ is of $\underline{\mathbf{FL}}_{\mathcal{UBD}_n}(U_k) \leq k-1$. However, it does not hold in general that every $n \times n$ upper $k$-diagonal matrix $U_k$ is of $\mathbf{FL}_{\mathcal{UBD}_n}(U_k)\leq k-1$. For example, consider the $3$-diagonal matrix
    \begin{equation}\label{eq:example of upper triangular matrix}
        U_3:=\begin{bmatrix}
        1 & 0 & 1 & 0 & 0\\
        0 & 1 & 0 & 0 & 0\\
        0 & 0 & 1 & 0 & 1\\
        0 & 0 & 0 & 1 & 0\\
        0 & 0 & 0 & 0 & 1
        \end{bmatrix}.
    \end{equation}
    If we derive a system of polynomial equations from 
    \begin{equation*}
        U_3=\begin{bmatrix}
            x_{11} & x_{12} & 0 & 0 & 0\\
            0 & x_{22} & x_{23} & 0 & 0\\
            0 & 0 & x_{33} & x_{34} & 0\\
            0 & 0 & 0 & x_{44} & x_{45}\\
            0 & 0 & 0 & 0 & x_{55}
        \end{bmatrix}\begin{bmatrix}
            y_{11} & y_{12} & 0 & 0 & 0\\
            0 & y_{22} & y_{23} & 0 & 0\\
            0 & 0 & y_{33} & y_{34} & 0\\
            0 & 0 & 0 & y_{44} & y_{45}\\
            0 & 0 & 0 & 0 & y_{55}
        \end{bmatrix},
    \end{equation*}
    then we can check that the system has no solution, and so $U_3$ cannot be expressed as a product of two upper bidiagonal matrices, that is, $\mathbf{FL}_{\mathcal{UBD}_5}(U_3)\geq 3$. On the other hand, from Corollary \ref{cor:limit expression}, the matrix $U_3$ must be expressed by a limit of products of two upper bidiagonal matrices. For example, the matrix $U_3$ has the following limit expression
    \begin{equation*}
        U_3=\lim_{\epsilon \rightarrow 0} \begin{bmatrix}
            1 & 1 & 0 & 0 & 0\\
            0 & 1 & -1 & 0 & 0\\
            0 & 0 & 1 & -\epsilon & 0\\
            0 & 0 & 0 & 1 & \epsilon^{-1}\\
            0 & 0 & 0 & 0 & 1
        \end{bmatrix}
        \begin{bmatrix}
            1 & -1 & 0 & 0 & 0\\
            0 & 1 & 1 & 0 & 0\\
            0 & 0 & 1 & \epsilon & 0\\
            0 & 0 & 0 & 1 & -\epsilon^{-1}\\
            0 & 0 & 0 & 0 & 1
        \end{bmatrix}.
    \end{equation*}
\end{example}

\begin{example}[Tridiagonal matrix structure]
    Consider $\mathcal{TD}_n$ the affine variety of $n \times n$ tridiagonal matrices. By Theorem \ref{thm:previous results}.(\romannumeral5), we obtain that
    \begin{equation*}
        \mathbf{FL}_{\text{max}}(\mathcal{TD}_n)\leq 8(n-1)~\text{ and }~\mathbf{FL}_{\text{gen}}(\mathcal{TD}_n) \leq 2(n-1).
    \end{equation*}
    The upper bound for the generic length is proved by applying Theorem \ref{thm:previous results}.(\romannumeral4) to an LU factorization of a generic $n \times n$ matrix. However, the LU factorization is not an optimal way to get a $\mathcal{TD}_n$-factorization. For example, for the matrix $U_3$ at (\ref{eq:example of upper triangular matrix}), consider $U_3^TU_3$. This expression is already an LU factorization of the matrix. Since $\mathbf{FL}_{\mathcal{UBD}_5}(U_3)\geq 3$, then we only get a $\mathcal{TD}_5$-factorization of $U_3^TU_3$ of length $\geq 6$ in this way. However, it has the factorization
    \begin{equation*}
        U_3^TU_3=\begin{bmatrix}
            1 & 0 & 1 & 0 & 0\\
            0 & 1 & 0 & 0 & 0\\
            1 & 0 & 2 & 0 & 1\\
            0 & 0 & 0 & 1 & 0\\
            0 & 0 & 1 & 0 & 2
        \end{bmatrix}=\begin{bmatrix}
            0 & 1 & 0 & 0 & 0\\
            1 & 0 & 0 & 0 & 0\\
            0 & 1 & 1 & 0 & 0\\
            0 & 0 & 0 & 0 & 1\\
            0 & 0 & 0 & 1 & 0
        \end{bmatrix}\begin{bmatrix}
            0 & 1 & 0 & 0 & 0\\
            1 & 0 & 0 & 0 & 0\\
            0 & 0 & 1 & 0 & 0\\
            0 & 0 & 1 & 0 & 1\\
            0 & 0 & 0 & 1 & 0
        \end{bmatrix}\begin{bmatrix}
            0 & 1 & 0 & 0 & 0\\
            1 & 0 & 0 & 0 & 0\\
            0 & 0 & 1 & 1 & 0\\
            0 & 0 & 0 & 0 & 1\\
            0 & 0 & 0 & 1 & 0
        \end{bmatrix}\begin{bmatrix}
            0 & 1 & 0 & 0 & 0\\
            1 & 0 & 1 & 0 & 0\\
            0 & 0 & 1 & 0 & 0\\
            0 & 0 & 0 & 0 & 1\\
            0 & 0 & 0 & 1 & 0
        \end{bmatrix},
    \end{equation*}
    and hence $\mathbf{FL}_{\mathcal{TD}_5}(U_3^TU_3) \leq 4$.

    As in the bidiagonal case, there exists a matrix $S$ such that $\mathbf{FL}_{\mathcal{TD}_n}(S) \neq \underline{\mathbf{FL}}_{\mathcal{TD}_n}(S)$. For example, consider
    \begin{equation*}
        S:=\begin{bmatrix}
            0 & 0 & 1 & 0\\
            0 & 0 & 0 & 1\\
            1 & 0 & 0 & 0\\
            0 & 1 & 0 & 0
        \end{bmatrix}.
    \end{equation*}
    If we derive a system of polynomial equations from
    \begin{equation*}
        S=\begin{bmatrix}
            x_{11} & x_{12} & 0 & 0\\
            x_{21} & x_{22} & x_{23} & 0\\
            0 & x_{32} & x_{33} & x_{34}\\
            0 &0 & x_{43} & x_{44}
        \end{bmatrix}
        \begin{bmatrix}
            y_{11} & y_{12} & 0 & 0\\
            y_{21} & y_{22} & y_{23} & 0\\
            0 & y_{32} & y_{33} & y_{34}\\
            0 &0 & y_{43} & y_{44}
        \end{bmatrix},
    \end{equation*}
    then we can check that this system has no solution, and so $S$ cannot be expressed as a product of two tridiagonal matrices, that is, $\mathbf{FL}_{\mathcal{TD}_4}(S) \geq 3$. However, since the matrix $S$ has the limit expression
    \begin{equation*}
        S=\lim_{\epsilon \rightarrow 0}\begin{bmatrix}
          1 & \epsilon^{-1} & 0 & 0 \\
          0 & 0 & \epsilon & 0\\
          0 & \epsilon & 0 & 0\\
          0 & 0 & \epsilon^{-1} & -\epsilon^{-2}
        \end{bmatrix}
        \begin{bmatrix}
         -\epsilon^{-2} & 0 & 0 & 0\\
         \epsilon^{-1} & 0 & \epsilon & 0\\
         0 & \epsilon & 0 & \epsilon^{-1}\\
         0 & 0 & 0 & 1
        \end{bmatrix},
    \end{equation*}
    then we obtain that $\underline{\mathbf{FL}}_{\mathcal{TD}_4}(S)=2$.
\end{example}

\begin{example}\label{ex:Skew-symmetric Structure}
    Consider $\Lambda_n$ the affine variety of $n \times n$ skew-symmetric matrices. By Theorem \ref{thm:previous results}.(\romannumeral6), we obtain that if $n \geq 4$, then
    \begin{equation*}
        \mathbf{FL}_{\text{max}}(\Lambda_{2n}) \leq 13~\text{ and }~\mathbf{FL}_{\text{gen}}(\Lambda_{2n}) \leq 3.
    \end{equation*}
    For the cases of $n=2$ and $n=3$, we also can obtain analogous statements from Theorem \ref{thm:previous results}.(\romannumeral6). Note that, for any invertible $(2n+1) \times (2n+1)$ matrix $A$, we obtain
    \begin{equation*}
        \mathbf{FL}_{\Lambda_{2n+1}}(A)=\underline{\mathbf{FL}}_{\Lambda_{2n+1}}(A)=\infty,
    \end{equation*}
    because any $(2n+1) \times (2n+1)$ skew-symmetric matrix is singular. This is why Theorem \ref{thm:previous results}.(\romannumeral6) for $(2n+1) \times (2n+1)$ case was stated only for singular matrices.
    
    Note that $I_n \notin \Lambda_n$. Remark \ref{rmk:whether contains the identity matrix} says that the inclusion $\Lambda_n \subset \mu_2(\Lambda_n)$ need not hold. Consider the $4 \times 4$ skew-symmetric matrix 
    \begin{equation*}
        K:=\begin{bmatrix}
            0 & 1 & 0 & 0\\
            -1 & 0 & 0 & 0\\
            0 & 0 & 0 & 2\\
            0 & 0 & -2 & 0
        \end{bmatrix}.
    \end{equation*}
    We show that it is not an element of $\mu_2^0(\Lambda_4)$. Suppose that $K=AB$, where $A$ and $B$ are $4 \times 4$ skew-symmetric matrices. Since $K$ is invertible, then both $A$ and $B$ are also invertible. Then 
    \begin{equation}\label{eq:skew-symmetric condition for a contradiction}
        A^{-1}K=B=-B^T=-(A^{-1}K)^T=-K^T(A^{-1})^T=-(-K)(-A^{-1})=-KA^{-1}.
    \end{equation}
    If we let
    \begin{equation*}
        A^{-1}=\begin{bmatrix}
            0 & p & a & b\\
            -p & 0 & c & d\\
            -a & -c &0 & q\\
            -b & -d &-q & 0
        \end{bmatrix},
    \end{equation*}
    then the equality $A^{-1}K=-KA^{-1}$ from (\ref{eq:skew-symmetric condition for a contradiction}) implies that $p=q=a=b=c=d=0$, a contradiction. Hence, we really obtain that
    \begin{equation*}
        \Lambda_4 \nsubseteq \mu_2^0(\Lambda_4).
    \end{equation*}
    Furthermore, $\Lambda_4 \nsubseteq \mu_2(\Lambda_4)$, and we will show it in Section \ref{subsection:Defining equations via Displacement} (see Example \ref{ex:defining equations for skew-symmetric matrix structure}). On the other hand, every $2n \times 2n$ identity matrix $I_{2n}$ is a product of two skew-symmetric matrices, because
     \begin{equation*}
         I_{2n}=\begin{bmatrix}
             0 & I_n\\
             -I_n & 0
         \end{bmatrix}\begin{bmatrix}
             0 & -I_n\\
             I_n & 0
         \end{bmatrix}.
     \end{equation*}
     Since $I_{2n} \in \mu_2^0(\Lambda_{2n})$, then $\mu_s(\Lambda_{2n}) \subseteq \mu_{s+2}(\Lambda_{2n})$ for all $s \geq 1$.
\end{example}

\begin{example}[Companion matrix structure]\label{ex:companion matrix structure}
    Consider $\mathcal{C}_n$ the affine variety of $n \times n$ companion matrices. By Theorem \ref{thm:previous results}.(\romannumeral7), we obtain that $\mathbf{FL}_{\text{gen}}(\mathcal{C}_n) \leq n$. Furthermore, if $1 \leq k \leq n-1$, then a product of $k$ $n\times n$ companion matrices has the $(1,1)$-entry as $0$. Thus, we obtain the equality
    \begin{equation*}
        \mathbf{FL}_{\text{gen}}(\mathcal{C}_n) = n.
    \end{equation*}
    From Theorem \ref{thm:previous results}.(\romannumeral7) again, we have that every $n \times n$ matrix is a product of $4n$ companion matrices and a diagonal matrix. Since a diagonal matrix is not a companion matrix, an additional argument is required to obtain an upper bound of $\mathbf{FL}_{\text{max}}(\mathcal{C}_n)$. Note that an $n \times n$ diagonal matrix can be expressed as a product of $n$ companion matrices, because the following equality holds:
    \begin{equation*}
        \begin{bmatrix}
            d_1 & & & \\
            & d_2 & & \\
             & & \ddots & \\
             & & & d_n
        \end{bmatrix}=\begin{bmatrix}
            0 &\cdots & 0 & d_1\\
            1 &  &  & 0\\
             & \ddots &  & \vdots \\
             & &  1 & 0
        \end{bmatrix}\begin{bmatrix}
            0 &\cdots & 0 & d_2\\
            1 &  &  & 0\\
             & \ddots &  & \vdots \\
             & &  1 & 0
        \end{bmatrix}\cdots \begin{bmatrix}
            0 &\cdots & 0 & d_n\\
            1 &  &  & 0\\
             & \ddots &  & \vdots \\
             & &  1 & 0
        \end{bmatrix}.
    \end{equation*}
    Thus, we have that
    \begin{equation*}
        \mathbf{FL}_{\text{max}}(\mathcal{C}_n) \leq 5n.
    \end{equation*}

    In \cite[Section 6.1]{ye2017new}, it was shown that an $n \times n$ matrix $A=[a_{ij}]$ with $a_{11}=0 $ and $a_{12}=1$ is not a product of $n$ companion matrices. For example, consider the matrix
    \begin{equation*}
        A:=[a_{ij}]=\begin{bmatrix}
            0 & 1 & 0\\
            0 & 0 & 0\\
            0 & 0 & 0
        \end{bmatrix},
    \end{equation*}
    which is not in $\mu_3^0(\mathcal{C}_3)$. By Theorem \ref{thm:previous results}.(\romannumeral7), we have that $A \in \mu_3(\mathcal{C}_3)$. From Corollary \ref{cor:limit expression}, the matrix $A$ must be expressed by a limit of products of three companion matrices. For example, $A$ has the following limit expression
    \begin{equation*}
        A=\lim_{\epsilon \rightarrow 0}\begin{bmatrix}
            0 & 0 & \epsilon\\
            1 & 0 & 0\\
            0 & 1 & 0
        \end{bmatrix}\begin{bmatrix}
            0 & 0 & 0\\
            1 & 0 & 0\\
            0 & 1 &1/\epsilon
        \end{bmatrix}\begin{bmatrix}
            0 & 0 & 0\\
            1 & 0 & 0\\
            0 & 1 & 0
        \end{bmatrix}.
    \end{equation*}
    In addition, we can notice that $A \notin \mu_2(\mathcal{C}_3)$, since $a_{31}=0 \neq 1$ whereas every matrix in $\mu_2(\mathcal{C}_3)$ has the $(3,1)$-entry as $1$. Although this does not imply $A \notin \mathcal{C}_3$ because $\mathcal{C}_3$ does not contain the identity matrix (see Remark \ref{rmk:whether contains the identity matrix}), the matrix $A$ is certainly not a companion matrix. Thus, we have that
    \begin{equation*}
        \underline{\mathbf{FL}}_{\mathcal{C}_3}(A)=3.
    \end{equation*}
\end{example}

\section{Geometry of Structured Matrix Factorizations}\label{section:Geometry of Structured Matrix Factorizations}

Once a new variety is introduced, it is natural to study its basic invariants, such as its dimension and degree. If the variety is not introduced by the defining equations, then it is also natural to study its defining equations. Hence, we propose the following problem as a geometric problem associated with structured matrix factorization length:

\begin{problem}\label{problem:main problems on the geometry of structured matrix factorization length}
    When an affine variety $X$ in $\mathbb{C}^{n \times n}$ and $r \in \mathbb{Z}_{\geq 1}$ are given, determine
    \begin{itemize}
        \item [(\romannumeral1)] $\dim \mu_r(X)$,
        \item [(\romannumeral2)] $\deg \mu_r(X)$, and
        \item [(\romannumeral3)] all defining equations of $\mu_r(X)$.
    \end{itemize}
\end{problem}

In this section, we suggest the methods to solve this problem, and solve it for some $X$ and $r$. In particular, we will devote substantial attention to the Toeplitz and Hankel case, i.e., the case where $X=\mathcal{T}_n$ or $X=\mathcal{H}_n$. 

\subsection{Dimension} \label{subsection:Dimension}

\subsubsection{General Dimension Bounds}

When irreducible affine varieties $X$ and $Y$ are given in $\mathbb{C}^{n \times n}$, the following lemma has been used to predict a number $r$ such that the matrix multiplication map $m_r:X^{\times r} \rightarrow Y$ is dominant, as in Proposition \ref{prop:lower bound of generic length}:

\begin{lemma}[see {\cite[Theorem 1.25]{shafarevich1994basic} and \cite[Proposition 3.2]{ye2017new}}]\label{lemma:fiber dimension}
    Assume that $Y \rightarrow Z$ is a dominant morphism between irreducible affine varieties, then $\dim Y = \dim Z + \dim f^{-1}(z)$ for a generic point $z \in Z$.
\end{lemma}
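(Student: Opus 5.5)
Since this is the classical fiber-dimension theorem (quoted from Shafarevich), the plan is to reduce it to a statement about finite field extensions and Noether normalization. Write $f:Y\to Z$ for the dominant morphism. Dominance means the pullback $f^*:\mathbb{C}[Z]\to\mathbb{C}[Y]$ is injective. Since both varieties are irreducible, the coordinate rings are domains, so $f^*$ induces a field extension $\mathbb{C}(Z)\subseteq\mathbb{C}(Y)$. I would then use the standard fact that the dimension of an irreducible affine variety equals the transcendence degree of its function field over $\mathbb{C}$. This gives $\operatorname{trdeg}_{\mathbb{C}(Z)}\mathbb{C}(Y)=\dim Y-\dim Z=:e$, and the goal becomes showing that a generic fiber has dimension exactly $e$.

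The first step is a relative Noether normalization. Choose $y_1,\dots,y_e\in\mathbb{C}[Y]$ algebraically independent over $\mathbb{C}(Z)$ such that $\mathbb{C}(Y)$ is finite over $\mathbb{C}(Z)(y_1,\dots,y_e)$. Each coordinate function of $Y$ then satisfies a monic polynomial over $\mathbb{C}[Z][y_1,\dots,y_e]$ after inverting a single nonzero $g\in\mathbb{C}[Z]$ (clear denominators of the finitely many minimal polynomials). So over the principal open set $Z_g$, the morphism $f$ factors as
\[
f^{-1}(Z_g)\xrightarrow{\ \pi\ } Z_g\times\mathbb{C}^e\longrightarrow Z_g ,
\]
where $\pi$ is finite. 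One can shrink further so that $\pi$ is also surjective, which is possible because a finite morphism with dense image is surjective.

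For $z\in Z_g$, the fiber $f^{-1}(z)$ then maps finitely and surjectively onto $\{z\}\times\mathbb{C}^e$. Finite surjective morphisms preserve dimension, since dimension is invariant under integral extensions by going-up and lying-over. Hence $\dim f^{-1}(z)=e$, and therefore $\dim Y=\dim Z+\dim f^{-1}(z)$ for every $z$ in the dense open set $Z_g$, which is what ``generic'' requires.

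The main obstacle is the uniformity. The relative normalization must be arranged so that a single $g$ works for every fiber, and the fibers, which may be reducible, must have every component of the right dimension (or at least the maximum equal to $e$). For the maximum, finiteness of $\pi$ over $\{z\}\times\mathbb{C}^e$ already suffices, because a finite map cannot raise dimension and surjectivity forces dimension at least $e$. If needed, I would instead cite the upper semicontinuity statement in \cite[Theorem 1.25]{shafarevich1994basic} directly.
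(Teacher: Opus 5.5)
The gap is in the relative Noether normalization step, which carries the uniformity over a single open set $Z_g$ that you yourself flag as the main obstacle. You choose $y_1,\dots,y_e$ only as a transcendence basis of $\mathbb{C}(Y)$ over $\mathbb{C}(Z)$. You then claim that clearing denominators of the minimal polynomials of the coordinate functions costs a single $g\in\mathbb{C}[Z]$. But those minimal polynomials have coefficients in the field $\mathbb{C}(Z)(y_1,\dots,y_e)$, so their denominators are polynomials in the $y_i$. After clearing them, the leading coefficient lies in $\mathbb{C}[Z][y_1,\dots,y_e]$ and is in general not in $\mathbb{C}[Z]$, so localizing at an element of $\mathbb{C}[Z]$ cannot make the equation monic.

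A concrete counterexample: take $Z=\mathbb{C}$, $Y=\{(z,x_1,x_2)\in\mathbb{C}^3 : x_1x_2=1\}$, $f(z,x_1,x_2)=z$, and $y_1=x_1$. Then $\mathbb{C}(Y)=\mathbb{C}(Z)(y_1)$, so your choice satisfies your stated condition. Yet $x_2=1/y_1$ is not integral over $\mathbb{C}[Z]_g[y_1]$ for any $g\neq 0$, and $\pi(z,x_1,x_2)=(z,x_1)$ misses $Z_g\times\{0\}$. So $\pi$ is neither finite nor surjective over any $Z_g$, and the factorization you write down does not exist for this choice.

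The missing idea is to apply Noether normalization to the generic fiber, i.e.\ to the finitely generated $\mathbb{C}(Z)$-algebra $\mathbb{C}[Y]\otimes_{\mathbb{C}[Z]}\mathbb{C}(Z)$, which is a domain of dimension $e$. Choose $y_1,\dots,y_e$ there, for example as suitably generic linear combinations of the coordinates rescaled into $\mathbb{C}[Y]$; in the example, $y_1=x_1+x_2$ works, since $x_1^2-y_1x_1+1=0$. Then each coordinate function is \emph{integral} over $\mathbb{C}(Z)[y_1,\dots,y_e]$, not merely algebraic over $\mathbb{C}(Z)(y_1,\dots,y_e)$. The monic equations now have coefficients in $\mathbb{C}(Z)[y_1,\dots,y_e]$, whose finitely many denominators lie in $\mathbb{C}[Z]$, so a single $g$ does clear them.

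With this repair, the rest of your argument is correct:
\begin{itemize}
\item the map $\mathbb{C}[Z]_g[y_1,\dots,y_e]\to\mathbb{C}[Y]_{g}$ is injective and finite, so $\pi$ is finite and dominant, hence already surjective with no further shrinking;
\item restricting over $z\in Z_g$ gives a finite surjective map $f^{-1}(z)\to\{z\}\times\mathbb{C}^e$, so $\dim f^{-1}(z)=e$.
\end{itemize}
The paper gives no proof of this lemma and only cites Shafarevich, so the repaired argument would be a self-contained substitute for that citation.
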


Here, our target is $\dim \mu_r(X)$, and so we will use this lemma for the case where $Y=X^{\times r}$ and $Z=\mu_r(X)$. In practice, it is hard to determine the exact value of the dimension of a generic fiber, but it is sometimes easy to get a positive lower bound $l$ of the dimension of a generic fiber, in particular, when the affine variety $X$ is a linear subspace of $\mathbb{C}^{n \times n}$. Then we obtain the upper bound of $\dim \mu_r(X)$:
\begin{equation*}
        \dim \mu_r(X) \leq r \cdot \dim X- l.
\end{equation*}

In addition, we will use the following lemma when improving the lower bound of $\dim \mu_r(X)$:

\begin{lemma}\label{lemma:the rank of a differential gives a lower bound of dimension}
    Let $\varphi:Y \rightarrow Z$ be a morphism between affine varieties, and let $y \in Y$ be a smooth point. Then
    \begin{equation*}
        \rank (d\varphi_y) \leq \dim \overline{\varphi(Y)}^{\text{Zar}}.
    \end{equation*}
\end{lemma}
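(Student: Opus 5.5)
Let $W:=\overline{\varphi(Y)}^{\text{Zar}}$. The plan is to reduce to the case where $Y$ is irreducible, and then compare $d\varphi_y$ with the differential at a generic point, where the generic fiber dimension from Lemma \ref{lemma:fiber dimension} controls the rank.

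First, since $y$ is a smooth point, it lies on a unique irreducible component $Y_0$ of $Y$, and $T_yY=T_yY_0$ with $\dim T_yY=\dim Y_0$. Replacing $Y$ by $Y_0$ only shrinks $\overline{\varphi(Y)}^{\text{Zar}}$, so it suffices to treat $Y$ irreducible. Then $W_0:=\overline{\varphi(Y_0)}^{\text{Zar}}$ is irreducible, and $\varphi:Y_0\to W_0$ is dominant. We will show $\rank(d\varphi_y)\leq \dim W_0$.

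Second, the image of $d\varphi_y$ lies in $T_{\varphi(y)}W_0$, because every polynomial vanishing on $W_0$ pulls back to a polynomial vanishing on $Y_0$, and the chain rule then kills $d\varphi_y$. This alone is not enough, since $\varphi(y)$ may be a singular point of $W_0$. So we use lower semicontinuity of rank instead. The function $y'\mapsto \rank(d\varphi_{y'})$ on the smooth locus of $Y_0$ is lower semicontinuous: the condition $\rank\geq k$ is Zariski open, being the nonvanishing of some $k\times k$ minor of the Jacobian of $\varphi$ restricted to local coordinates on the tangent spaces, which vary algebraically on the smooth open set. Hence the set where $\rank(d\varphi_{y'})\geq\rank(d\varphi_y)$ is a nonempty open subset $U$ of $Y_0$. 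By Chevalley's theorem, together with generic smoothness (characteristic $0$), there is a dense open $V\subseteq Y_0$ such that for $y'\in V$ the point $\varphi(y')$ is smooth on $W_0$, the fiber through $y'$ has dimension $\dim Y_0-\dim W_0$ (Lemma \ref{lemma:fiber dimension}), and $d\varphi_{y'}$ surjects onto $T_{\varphi(y')}W_0$ with kernel equal to the tangent space of the fiber. Since $Y_0$ is irreducible, $U\cap V\neq\emptyset$. Picking $y'$ there gives $\rank(d\varphi_y)\leq\rank(d\varphi_{y'})=\dim T_{\varphi(y')}W_0=\dim W_0\leq \dim W$.

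The main obstacle is the generic-point step: justifying that at a generic $y'$ the rank of $d\varphi_{y'}$ is at most $\dim W_0$. A cleaner route avoids generic smoothness altogether. At a generic point $w$ of $W_0$ we have $\dim T_wW_0=\dim W_0$, because the singular locus of $W_0$ is a proper closed subset, so its preimage is a proper closed subset of $Y_0$ by dominance. Combined with the containment $\operatorname{im} d\varphi_{y'}\subseteq T_{\varphi(y')}W_0$ from the second step, this gives $\rank(d\varphi_{y'})\leq\dim W_0$ on a dense open set, which is all we need for the semicontinuity argument. I would present this version, citing \cite{shafarevich1994basic} for the openness of the smooth locus and for the lower semicontinuity of the rank.
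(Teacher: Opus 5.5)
Your proof is correct, but it takes a different route from the paper's.

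\textbf{The paper's route.} Set $r=\rank(d\varphi_y)$. The paper chooses a linear projection $\pi:\mathbb{C}^N\to\mathbb{C}^r$ with $d(\pi\circ\varphi)_y$ surjective. It uses smoothness of $y$ to keep $d(\pi\circ\varphi)$ surjective on an open neighborhood $U$, and concludes that $\overline{\pi(\varphi(U))}$ has dimension $r$. Finally it uses Lemma \ref{lemma:fiber dimension} to say that applying $\pi$ cannot raise dimension.

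\textbf{Your route.} You avoid the projection. Semicontinuity transports the bound from $y$ to a generic $y'$. There the chain rule puts $\operatorname{im} d\varphi_{y'}$ inside $T_{\varphi(y')}W_0$ at a smooth point of $W_0$, which has dimension $\dim W_0$.

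\textbf{What each buys.} The paper's version is shorter on the page because it reduces to the full-rank case. It then treats ``surjective differential on an open set implies dominant'' as known; this is the content of Proposition \ref{prop:the rank of differential and dominance}, or of the holomorphic submersion theorem. Your version proves that fact instead of assuming it. Applied to $\pi\circ\varphi$, your generic-point argument is exactly the justification of the paper's step $\dim\overline{\pi(\varphi(U))}=r$. It also needs neither the projection nor the fiber-dimension lemma. Present the ``cleaner'' version; the generic-smoothness variant is unnecessary.

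\textbf{Making semicontinuity explicit.} Take a polynomial lift $\tilde\varphi$ and let $\mathbf{J}_{y'}$ be the Jacobian of generators of $\mathbf{I}(Y_0)$. Then $\rank(d\varphi_{y'})=\rank\left[\begin{smallmatrix}\mathbf{J}_{y'}\\ D\tilde\varphi(y')\end{smallmatrix}\right]-\rank\mathbf{J}_{y'}$. Since $\rank\mathbf{J}_{y'}$ is constant on the smooth locus of $Y_0$, the set $\{\rank(d\varphi_{y'})\ge k\}$ is open there.

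\textbf{Which notion of smoothness.} Your Step 1 (a unique component through $y$, with $\dim T_yY=\dim Y_0$) uses the local notion $\dim T_yY=\dim_y Y$. The paper's openness step silently needs the same thing. Under the paper's literal global definition ($\dim T_yY=\dim Y$), the lemma fails for reducible $Y$. Take $Y=\{c=d=e=0\}\cup\{a=b=e=0\}\cup\{e=1\}\subseteq\mathbb{C}^5$ and $\varphi=(1-e)(a,b,c,d)$. Then $\dim T_0Y=4=\dim Y$, yet $\rank(d\varphi_0)=4>2=\dim\overline{\varphi(Y)}^{\text{Zar}}$. In the paper's applications $Y=X^{\times r}$ is irreducible and smooth, so this does not affect its results.
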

\begin{proof}
    In this proof, we write the Zariski closure of a variety $X$ as $\overline{X}$ instead of $\overline{X}^{\text{Zar}}$ for convenience. Let $r=\rank (d\varphi_y)$, and let $Z$ be an affine variety in $\mathbb{C}^N$. Since $d\varphi_y:T_yY \rightarrow T_{\varphi(y)}Z$ has rank $r$ and $Z \subseteq \mathbb{C}^{N}$, then there exists a linear map $\pi:\mathbb{C}^N \rightarrow \mathbb{C}^r$ such that $d(\pi \circ \varphi)_y:T_yY \rightarrow T_{\pi(\varphi(y))}\mathbb{C}^r$ is surjective. Since $y \in Y$ is a smooth point, then there exists a Zariski open neighborhood $U \subseteq Y$ of $y$ such that $d(\pi \circ \varphi)_{y'}$ has rank $r$ for every $y' \in U$. Hence, $\dim \overline{\pi(\varphi(U))}= r$. Thus, we obtain that
    \begin{equation*}
        r=\dim \overline{\pi(\varphi(U))} \leq  \dim \overline{\pi\left(\overline{\varphi(Y)}\right)} \leq \dim \overline{\varphi(Y)},
    \end{equation*}
    where the last inequality is from Lemma \ref{lemma:fiber dimension}.
\end{proof}

\subsubsection{Toeplitz and Hankel matrix structures}

We will determine the dimensions of $\mu_r(\mathcal{T}_n)$ and $\mu_r(\mathcal{H}_n)$ for every $r,n \geq 1$ in this section (see Theorem \ref{thm:dimension for Toeplitz and Hankel structures}). In Example \ref{ex:Hankel}, we explained the argument in \cite{ye2016every} for showing that $\mathbf{FL}_{\text{gen}}(\mathcal{H}_n)=\lfloor n/2 \rfloor+1$ from the result that  $\mathbf{FL}_{\text{gen}}(\mathcal{T}_n)=\lfloor n/2 \rfloor+1$. The key ingredient was that the left (resp. right) multiplication by $J$ gives an isomorphism between $\mathcal{T}_n$ and $\mathcal{H}_n$. Since we will deal with the varieties $\mu_r(\mathcal{T}_n)$ and $\mu_r(\mathcal{H}_n)$ for various $r$, we will use similar arguments to show that
    \begin{equation*}
        \mu_r(\mathcal{T}_n)\cong\mu_r(\mathcal{H}_n)
    \end{equation*}
for arbitrary $r$. Then we will calculate $\dim \mu_r(\mathcal{T}_n)$.

\begin{proposition}\label{prop:isomorphism between Toeplitz and Hankel factorization varieties}
    The variety $\mu_r(\mathcal{T}_n)$ is isomorphic to $\mu_r(\mathcal{H}_n)$ for every $r,n \geq 1$.
\end{proposition}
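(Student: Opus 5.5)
The plan is to use the reversal permutation matrix $J$ from (\ref{eq:reversal permutation matrix}), with $J^2=I_n$, and the facts recalled in Example \ref{ex:Hankel}: $T\mapsto JT$ and $T\mapsto TJ$ are linear bijections $\mathcal{T}_n\to\mathcal{H}_n$, each its own inverse. Then I exhibit a linear automorphism $\psi$ of $\mathbb{C}^{n\times n}$ with $\psi(\mu_r^0(\mathcal{T}_n))=\mu_r^0(\mathcal{H}_n)$. A linear automorphism is a homeomorphism in the Zariski topology, so it carries Zariski closures to Zariski closures. Hence $\psi$ restricts to an isomorphism $\mu_r(\mathcal{T}_n)\cong\mu_r(\mathcal{H}_n)$, with inverse given by the restriction of the linear map $\psi^{-1}$.

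Define $\psi(A)=A$ if $r$ is even and $\psi(A)=JA$ if $r$ is odd; in both cases $\psi^{-1}=\psi$. For $r$ even and Toeplitz $T_1,\dots,T_r$, insert $J^2=I_n$ between consecutive factors:
\[
T_1\cdots T_r=(T_1J)(JT_2)(T_3J)(JT_4)\cdots(T_{r-1}J)(JT_r),
\]
which is a product of $r$ Hankel matrices. This gives $\mu_r^0(\mathcal{T}_n)\subseteq\mu_r^0(\mathcal{H}_n)$. Conversely, given Hankel $H_1,\dots,H_r$, write $H_{2k-1}=T_{2k-1}J$ and $H_{2k}=JT_{2k}$ with $T_i\in\mathcal{T}_n$. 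This is possible because $HJ$ and $JH$ are Toeplitz whenever $H$ is Hankel. The product then collapses back to $T_1\cdots T_r$, so the two images coincide.

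For $r$ odd, the identity
\[
J\,T_1\cdots T_r=(JT_1)(T_2J)(JT_3)\cdots(T_{r-1}J)(JT_r)
\]
shows $J\mu_r^0(\mathcal{T}_n)\subseteq\mu_r^0(\mathcal{H}_n)$. Given Hankel $H_1,\dots,H_r$, write $H_i=JT_i$ for odd $i$ and $H_i=T_iJ$ for even $i$. Reading the same identity backwards gives $H_1\cdots H_r=J\,T_1\cdots T_r$, which is the reverse inclusion. When $r=1$ this is just the statement $J\mathcal{T}_n=\mathcal{H}_n$.

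The only point needing care is the bookkeeping of where each $J$ goes, so that every pair $J\cdot J$ cancels and a single $J$ is left over exactly when $r$ is odd. The passage from images to their closures is formal once we note that $\psi$ is a linear automorphism of the ambient space.
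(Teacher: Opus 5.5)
Your proof is correct and follows the paper's argument. It uses the same $J$-insertion identities, the identity map for even $r$, and $A\mapsto JA$ for odd $r$. The one difference is the closure step for odd $r$: the paper uses the limit expression of Corollary \ref{cor:limit expression}, whereas you prove $J\mu_r^0(\mathcal{T}_n)=\mu_r^0(\mathcal{H}_n)$ for the images and then use that a linear automorphism commutes with Zariski closure. Your route is slightly more elementary and states the reverse inclusion explicitly, which the paper leaves implicit.
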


\begin{proof}
    Consider the reversal permutation matrix $J$ described at (\ref{eq:reversal permutation matrix}), which satisfies that $J^2$ is the identity matrix $I_n$. Assume that $r$ is even. Then $\mu_r^0(\mathcal{T}_n)=\mu_r^0(\mathcal{H}_n)$, since if one contains $A=M_1\cdots M_r$ then another also contains $A=(M_1J)(JM_2)\cdots (M_{r-1}J)(JM_r)$. Hence, in this case, $\mu_r(\mathcal{T}_n)=\mu_r(\mathcal{H}_n)$. Now, assume that $r$ is odd. If $A=\lim_{\epsilon \rightarrow 0}M_1(\epsilon)\cdots M_r(\epsilon)$, then 
    \begin{equation*}
        JA=\lim_{\epsilon \rightarrow 0}(J\cdot M_1(\epsilon))(M_2(\epsilon)\cdot J)(J\cdot M_3(\epsilon))\cdots(M_{r-1}(\epsilon)\cdot J)(J\cdot M_r(\epsilon)).
    \end{equation*}
    Hence, the map $\varphi:\mu_r(\mathcal{T}_n) \rightarrow \mu_r(\mathcal{H}_n), ~M \rightarrow JM$ is well-defined. Moreover, since $J$ satisfies $J^2=I_n$, then the map $\varphi$ is an isomorphism.
\end{proof}

\begin{lemma}\label{lemma:upper bound of the dimension, toeplitz}
    The dimension of $\mu_r(\mathcal{T}_n)$ (and so the dimension of $\mu_r(\mathcal{H}_n)$) has the upper bound
    \begin{equation*}
        \dim \mu_r(\mathcal{T}_n) = \dim \mu_r(\mathcal{H}_n) \leq \min\{n^2, 2r(n-1)+1\}.
    \end{equation*}
\end{lemma}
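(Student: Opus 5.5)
The equality $\dim \mu_r(\mathcal{T}_n)=\dim \mu_r(\mathcal{H}_n)$ is immediate from Proposition \ref{prop:isomorphism between Toeplitz and Hankel factorization varieties}, since dimension is an isomorphism invariant. The bound by $n^2$ is trivial because $\mu_r(\mathcal{T}_n)\subseteq \mathbb{C}^{n\times n}$. So the real content is $\dim \mu_r(\mathcal{T}_n)\le 2r(n-1)+1$. I will prove this with Lemma \ref{lemma:fiber dimension}, applied to the dominant morphism $m_r:\mathcal{T}_n^{\times r}\to \mu_r(\mathcal{T}_n)$ between irreducible varieties. Irreducibility of $\mu_r(\mathcal{T}_n)$ comes from Proposition \ref{prop:irreducibility}, and $\mathcal{T}_n^{\times r}$ is a linear space of dimension $r(2n-1)$.

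The plan is to exhibit an $(r-1)$-dimensional family inside every fiber. Given $(M_1,\dots,M_r)$, consider the torus action
\begin{equation*}
(\lambda_1,\dots,\lambda_{r-1})\cdot(M_1,\dots,M_r)=(\lambda_1M_1,\ \lambda_1^{-1}\lambda_2M_2,\ \dots,\ \lambda_{r-2}^{-1}\lambda_{r-1}M_{r-1},\ \lambda_{r-1}^{-1}M_r),
\end{equation*}
with $\lambda_i\in\mathbb{C}^*$. Each factor stays Toeplitz because $\mathcal{T}_n$ is a cone, and the product $M_1\cdots M_r$ is unchanged. If all $M_i\neq 0$, which holds generically, the orbit map $(\mathbb{C}^*)^{r-1}\to \mathcal{T}_n^{\times r}$ is injective: $\lambda_1$ is recovered from $\lambda_1M_1=M_1$ forcing $\lambda_1=1$, then $\lambda_2$, and so on inductively. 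Hence the orbit has dimension $r-1$ and lies in the fiber $m_r^{-1}(M_1\cdots M_r)$.

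For a generic point $z$ of $\mu_r(\mathcal{T}_n)$, the fiber $m_r^{-1}(z)$ contains such an orbit. This uses the fact that generic points lie in the image (Chevalley) and have preimages with all factors nonzero, since the nonzero locus is dense open and its image is dense. The orbit is an irreducible constructible set of dimension $r-1$, so the fiber has dimension at least $r-1$. Lemma \ref{lemma:fiber dimension} then gives
\begin{equation*}
\dim \mu_r(\mathcal{T}_n)\le r(2n-1)-(r-1)=2r(n-1)+1.
\end{equation*}

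The only mildly delicate step is making sure the generic fiber genuinely contains a full $(r-1)$-dimensional orbit. The fiber dimension statement refers to generic $z$, so I only need the nonzero-factor preimage to exist over a dense open set, which is handled as above. For $r=1$ the bound reads $2n-1=\dim\mathcal{T}_n$, which is consistent.
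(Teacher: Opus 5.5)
Your proof is correct and follows the paper's own argument. Both use the Toeplitz--Hankel isomorphism for the equality of dimensions, place an $(r-1)$-dimensional scaling family $\{(s_1M_1,\dots,s_rM_r) : s_1\cdots s_r=1\}$ inside a generic fiber, and apply the fiber-dimension lemma to get $r(2n-1)-(r-1)=2r(n-1)+1$. Your check that all factors are nonzero, needed for the family to have dimension $r-1$, is a small point the paper leaves implicit; it holds automatically whenever the product is nonzero.
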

\begin{proof}
    From Proposition \ref{prop:isomorphism between Toeplitz and Hankel factorization varieties}, $\dim \mu_r(\mathcal{T}_n)=\dim \mu_r(\mathcal{H}_n)$. Now, we prove the inequality for $\dim \mu_r(\mathcal{T}_n)$. As in the proof of Theorem \ref{thm:Zariski vs. Euclidean closures}, we have that the set $\mu_r^0(\mathcal{T}_n)$ contains a dense open subset $U$ of $\mu_r(\mathcal{T}_n)$. Assume that $A \in U \subseteq \mu_r^0(\mathcal{T}_n)$. Then $A$ can be expressed as $A=M_1\cdots M_r$ for some $M_1,...,M_r \in \mathcal{T}_n$. Hence, we also have that
    \begin{equation*}
        A=(s_1M_1)\cdots (s_rM_r)
    \end{equation*}
    for arbitrary $(s_1,...,s_r) \in \mathbb{C}^{\times r}$ such that $s_1\cdots s_r=1$. Furthermore, since $\mathcal{T}_n$ is a linear subspace of $\mathbb{C}^{n \times n}$, then each $s_iM_i$ is in $\mathcal{T}_n$ for every $s_i \in \mathbb{C}$. Since $\dim \{(s_1,...,s_r)\in \mathbb{C}^{\times r}~|~s_1\cdots s_r=1\}=r-1$, then $\dim m_r^{-1}(A) \geq r-1$. Therefore, $\dim \mu_r(\mathcal{T}_n) \leq r \cdot (2n-1)-(r-1)=2r(n-1)+1$ by Lemma \ref{lemma:fiber dimension}. Since $\dim \mu_r(\mathcal{T}_n)$ cannot exceed the dimension of the ambient space $ \mathbb{C}^{n \times n}$, then $\dim \mu_r(\mathcal{T}_n) \leq \min\{n^2, 2r(n-1)+1\}$.
\end{proof}

We now show that the inequality in this lemma is in fact an equality (see Theorem \ref{thm:dimension for Toeplitz and Hankel structures}). We will prove the equality by improving the lower bound of $\dim \mu_r(\mathcal{T}_n)$ up to $2r(n-1)+1$ when $\mu_r(\mathcal{T}_n) \subsetneq \mathbb{C}^{n \times n}$ by using Lemma \ref{lemma:the rank of a differential gives a lower bound of dimension}. The process of the proof is similar to that of Theorem \ref{thm:previous results}.(\romannumeral2) in \cite{ye2016every}. Let
\begin{equation}\label{eq:basis of the space of Toeplitz matrices}
    B_k= [b_{ij}],~\text{ where }~ b_{ij}=\begin{cases}
        1 & \text{if }~j-i=k\\
        0 & \text{otherwise}
    \end{cases}
\end{equation}
so that $\{B_{-n+1},...,B_{-1},B_0,B_1,\cdots,B_{n-1}\}$ is a basis of the vector space $\mathcal{T}_n$. The authors proved that the matrix multiplication map $m_r:\mathcal{T}_n^{\times (\lfloor n/2 \rfloor +1)} \rightarrow \mathbb{C}^{n \times n}$ is dominant  by showing that the differential $d(m_{\lfloor n/2 \rfloor +1})_{\tau}:\mathcal{T}_n^{\times (\lfloor n/2 \rfloor +1)} \rightarrow \mathbb{C}^{n \times n}$ is of full rank at $\tau=(T_{n-\lfloor n/2 \rfloor -1},T_{n-\lfloor n/2 \rfloor },...,T_{n-1})$, where
\begin{equation}\label{eq:special points of pencils of Toeplitz matrices}
    T_{n-i}=B_0+t_{n-i}(B_{n-i}-B_{-(n-i)}).
\end{equation}
We let $\tau$ be called Ye-Lim's special point. Choosing the special point 
\begin{equation}\label{eq:the special point for differential}
    \tau_{r}:=(T_{n-r},...,T_{n-1})
\end{equation}
by selecting last $r$ entries of $\tau$, we will additionally show that
\begin{equation}\label{eq:lower bound of the rank of differential with respect to Toeplitz matrix structure}
    \rank (d(m_r)_{\tau_r}) \geq 2r(n-1)+1 
\end{equation}
for $r=2,3,...,\lfloor n/2 \rfloor-1$. In order to illustrate the proof idea, we calculate the dimension of $\mu_r(\mathcal{T}_n)$ for some small $n$ and $r$.

\begin{example}
    Consider the case where $(n,r)=(4,2)$. We calculate the dimension of $\mu_2(\mathcal{T}_4) \subsetneq \mathbb{C}^{4 \times 4}$. Consider the matrix multiplication map $m_2:\mathcal{T}_4 \times \mathcal{T}_4 \rightarrow \mathbb{C}^{4 \times 4}$. From (\ref{eq:special points of pencils of Toeplitz matrices}), we obtain
    \begin{equation*}
        T_2=\begin{bmatrix}
            1 & 0 & 0 & 0\\
            0 & 1 & 0 & 0\\
            0 & 0 & 1 & 0\\
            0 & 0 & 0 & 1
        \end{bmatrix}+t_2\begin{bmatrix}
            0 & 0 & 1 & 0\\
            0 & 0 & 0 & 1\\
            -1 & 0 & 0 & 0\\
            0 & -1 & 0 & 0 
        \end{bmatrix}~\text{ and }~ T_3=\begin{bmatrix}
            1 & 0 & 0 & 0\\
            0 & 1 & 0 & 0\\
            0 & 0 & 1 & 0\\
            0 & 0 & 0 & 1
        \end{bmatrix}+t_3\begin{bmatrix}
            0 & 0 & 0 & 1\\
            0 & 0 & 0 & 0\\
            0 & 0 & 0 & 0\\
            -1 & 0 & 0 & 0 
        \end{bmatrix}.
    \end{equation*}
    From (\ref{eq:the special point for differential}), we obtain $\tau_2=(T_2,T_3)$. Let $X_2$ and $X_3$ be matrices such that $[X_2]_{i,j}:=x_{2,j-i}$ and $[X_{3}]_{i,j}:=x_{3,j-i}$ where the variables $x_{k,j-i}$'s are all independent. Then we have that
    \begin{equation*}
        d(m_2)_{\tau_2}(X_2,X_3)=X_2T_3+T_2X_3.
    \end{equation*}
    Consider the change of coordinates defined by
    \begin{equation}\label{eq:change of coordinates 1}
        \begin{bmatrix}
            y_j\\
            z_j
        \end{bmatrix}=\begin{bmatrix}
            1 & 1 \\
            1 & 0
        \end{bmatrix}\begin{bmatrix}
            x_{2,j}\\ x_{3,j}
        \end{bmatrix},~\text{ i.e., }~\begin{cases}
            y_j=x_{2,j}+x_{3,j}\\
            z_j=x_{2,j}.
        \end{cases}
    \end{equation}
    Then 
    \begin{equation*}
    \begin{aligned}
        &d(m_{2})_{\tau_2}(X_2,X_3)\\
        &=\begin{bmatrix}
            y_0 & y_1 & y_2 & y_3\\
            y_{-1} & y_0 & y_1 & y_2\\
            y_{-2} & y_{-1} & y_0 & y_1\\
            y_{-3} & y_{-2} & y_{-1} & y_0
        \end{bmatrix}+t_3\begin{bmatrix}
            -z_3 & 0 & 0 & z_0\\
            -z_2 & 0 & 0 & z_{-1}\\
            -z_1 & 0 & 0 & z_{-2}\\
            -z_0 & 0 & 0 & z_{-3}
        \end{bmatrix}+t_2\begin{bmatrix}
            y_{-2}-z_{-2} & y_{-1}-z_{-1} & y_{0}-z_{0} & y_{1}-z_{1}\\
            y_{-3}-z_{-3} & y_{-2}-z_{-2} & y_{-1}-z_{-1} & y_{0}-z_{0}\\
            -y_0+z_0 & -y_1+z_1 & -y_2+z_2 & -y_3+z_3\\
            -y_{-1}+z_{-1} & -y_0+z_0 & -y_1+z_1  & -y_2+z_2
        \end{bmatrix}\\
        &=:L.
    \end{aligned}
    \end{equation*}
    If we select the positions
    \begin{equation}\label{eq:picking Lij 1}
        \{L_{1,1}, L_{2,1}, L_{3,1}, L_{2,4}, L_{3,4}, L_{4,4}, L_{4,1}, L_{4,2}, L_{3,2}, L_{2,2}, L_{1,2}, L_{1,3}, L_{1,4}\}
    \end{equation}
    as in Figure \ref{fig:picking Lij 1} and the variables
    \begin{equation*}
        \{z_3,z_2,z_1,z_{-1},z_{-2},z_{-3},y_{-3},y_{-2},y_{-1},y_0,y_1,y_2,y_3\},
    \end{equation*}
    then, with respect to these choices of column and row indices, we obtain the following submatrix $M$ of a matrix representation of $(dm_2)_{\tau_2}$ :
    \begin{equation*}
        M=\begin{bNiceArray}{cccccc|ccccccc}[first-row,first-col]
        & z_3 & z_2 & z_1 & z_{-1} & z_{-2} & z_{-3} & y_{-3} & y_{-2} & y_{-1} & y_0 & y_1 & y_2 & y_3\\
        L_{1,1} & -t_3 & & & & -t_2 & & & t_2 & & 1 & & & \\
        L_{2,1} & & -t_3 & & & & -t_2 & t_2 &  & 1 & & & & \\
        L_{3,1} & & & -t_3 & & & & & 1 & & -t_2 & & & \\
        L_{2,4} & & & & t_3 & & & & & & t_2 & & 1 & \\
        L_{3,4} & t_2 & & & & t_3 & & & & & & 1 & & -t_2\\
        L_{4,4} & & t_2 & & & & t_3 & & & & 1 & & -t_2 & \\ \hline
        L_{4,1} & & & & t_2 & & & 1 & & -t_2 & & & & \\
        L_{4,2} & & & & & & & & 1 & & -t_2 & & & \\
        L_{3,2} & & & t_2 & & & & & & 1 & & -t_2 & & \\
        L_{2,2} & & & & & -t_2 & & & t_2 & & 1 & & & \\
        L_{1,2} & & & & -t_2 & & & & & t_2 & & 1 & & \\
        L_{1,3} & & & & & & & & & & t_2 & & 1 & \\
        L_{1,4} & & & -t_2 & & & & & & & & t_2 & & 1
        \end{bNiceArray}.
    \end{equation*}
    Here, $\det M=-t_3^6+(\text{lower degree terms in $t_3$})$, and the change of coordinates (\ref{eq:change of coordinates 1}) does not change the rank of $d(m_2)_{\tau_2}(X_2,X_3)$. Hence, $\operatorname{rank} (d(m_2)_{\tau_2}(X_2,X_3)) \geq 13$ for generic $(t_2,t_3) \in \mathbb{C}^{\times 2}$. This implies that $\dim \mu_2(\mathcal{T}_4)\geq 13$ by Lemma \ref{lemma:the rank of a differential gives a lower bound of dimension}. Combining with Lemma \ref{lemma:upper bound of the dimension, toeplitz}, we obtain that $\dim \mu_2(\mathcal{T}_4)=13$.
    
    \begin{figure}[t!]
    \includegraphics[width=0.3\textwidth]{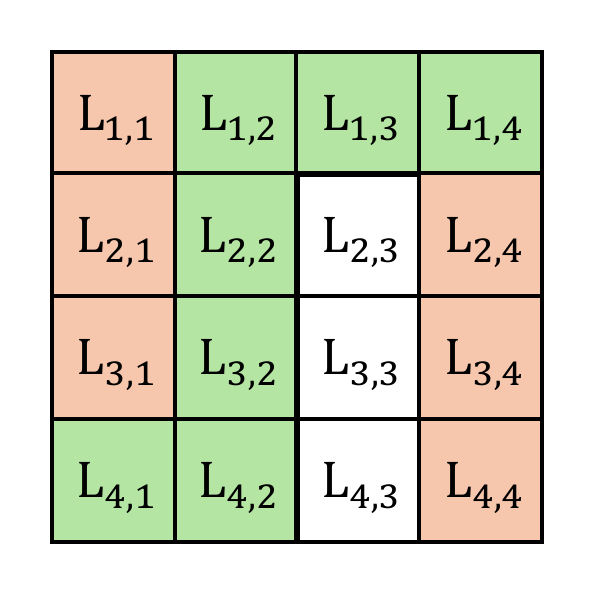}
    \caption{The set at (\ref{eq:picking Lij 1}) is the union of $\{L_{1,1},L_{2,1},L_{3,1}\}, \{L_{2,4},L_{3,4},L_{4,4}\}$, and $\{L_{4,1},L_{4,2},L_{3,2},L_{2,2},L_{1,2},L_{1,3},L_{1,4}\}$.} 
    \label{fig:picking Lij 1}
    \end{figure}
\end{example}

\begin{example}
    Consider the case where $(n,r)=(6,3)$. We calculate the dimension of $\mu_3(\mathcal{T}_6) \subsetneq \mathbb{C}^{6 \times 6}$. Consider the matrix multiplication map $m_3:\mathcal{T}_6 \times \mathcal{T}_6 \times \mathcal{T}_6 \rightarrow \mathbb{C}^{6 \times 6}$. From (\ref{eq:special points of pencils of Toeplitz matrices}) and (\ref{eq:the special point for differential}), we obtain
    \begin{equation*}
        T_3=B_0+t_3(B_3-B_{-3}),~T_4=B_0+t_4(B_4-B_{-4}),~T_5=B_0+t_5(B_5-B_{-5}),~\text{and}~\tau_3=(T_3,T_4,T_5).
    \end{equation*}
    Let $X_3,X_4$ and $X_5$ be matrices such that $[X_{k}]_{i,j}:=x_{k,j-i}$ for $k=3,4,5$ where the variables $x_{k,j-i}$'s are all independent. Then we obtain that
    \begin{equation*}
        d(m_3)_{\tau_3}(X_3,X_4,X_5)=X_3T_4T_5+T_3X_4T_5+T_3T_4X_5.
    \end{equation*}
    Consider the change of coordinates defined by 
    \begin{equation}\label{eq:change of coordinates 2}
        \begin{bmatrix}
            y_j\\
            z_{5,j}\\
            z_{4,j}
        \end{bmatrix}=\begin{bmatrix}
            1 & 1 & 1\\
            1 & 1 & 0\\
            1 & 0 & 0
        \end{bmatrix}\begin{bmatrix}
            x_{3,j}\\ x_{4,j} \\ x_{5,j}
        \end{bmatrix},~\text{ i.e., }~\begin{cases}
            y_j=x_{3,j}+x_{4,j}+x_{5,j}\\
            z_{5,j}=x_{3,j}+x_{4,j}\\
            z_{4,j}=x_{3,j}.
        \end{cases}
    \end{equation}
    Then 
    \begin{equation*}
        \begin{aligned}
            &d(m_3)_{\tau_3}(X_3,X_4,X_5)\\
            &=\begin{bmatrix}
                y_0 & y_1 & y_2 & y_3 & y_4 & y_5\\
                y_{-1} & y_0 & y_1 & y_2 & y_3 & y_4\\
                y_{-2} & y_{-1} & y_0 & y_1 & y_2 & y_3\\
                y_{-3} & y_{-2} & y_{-1} & y_0 & y_1 & y_2\\
                y_{-4} & y_{-3} & y_{-2} & y_{-1} & y_0 & y_1\\
                y_{-5} & y_{-4} & y_{-3} & y_{-2} & y_{-1} & y_0
            \end{bmatrix}\\
            &+t_5\begin{bmatrix}
                -z_{5,5} & 0 & 0 & 0 & 0 & z_{5,0}\\
                -z_{5,4} & 0 & 0 & 0 & 0 & z_{5,-1}\\
                -z_{5,3} & 0 & 0 & 0 & 0 & z_{5,-2}\\
                -z_{5,2} & 0 & 0 & 0 & 0 & z_{5,-3}\\
                -z_{5,1} & 0 & 0 & 0 & 0 & z_{5,-4}\\
                -z_{5,0} & 0 & 0 & 0 & 0 & z_{5,-5}
            \end{bmatrix}+t_4\begin{bmatrix}
                    -z_{4,4} & -z_{4,5} & 0 & 0 & z_{4,0} & z_{4,1}\\
                    -z_{4,3} & -z_{4,4} & 0 & 0 & z_{4,-1} & z_{4,0}\\
                    -z_{4,2} & -z_{4,3} & 0 & 0 & z_{4,-2} & z_{4,-1}\\
                    -z_{4,1} & -z_{4,2} & 0 & 0 & z_{4,-3} & z_{4,-2}\\
                    -z_{4,0} & -z_{4,1} & 0 & 0 & z_{4,-4} & z_{4,-3}\\
                    -z_{4,-1} & -z_{4,0} & 0 & 0 & z_{4,-5} & z_{4,-4}
                \end{bmatrix}\\
                &+(\text{the homogeneous linear terms in}~t_3)+(\text{higher degree terms in}~t_3,t_4,t_5)\\
                &=:L.
        \end{aligned}
    \end{equation*}
    \begin{figure}[t!]
    \includegraphics[width=0.45\textwidth]{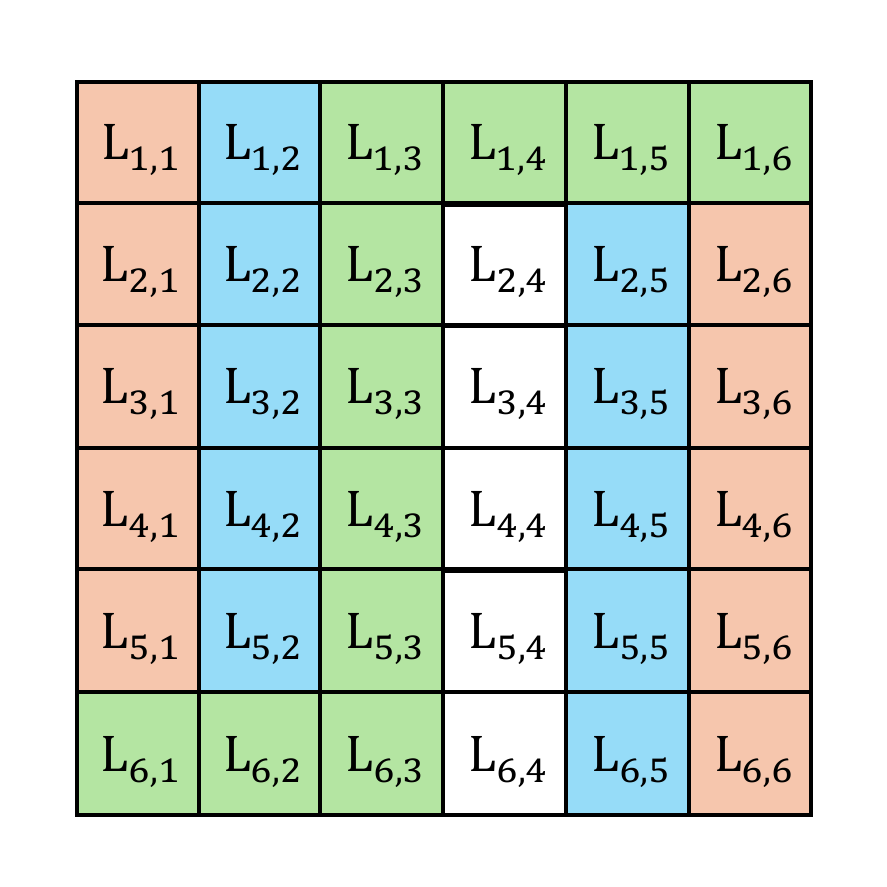}
    \caption{The set at (\ref{eq:picking Lij 2}) is the union of $\{L_{1,1},...,L_{5,1}\}, \{L_{2,6},...,L_{6,6}\}$, $\{L_{1,2},...,L_{5,2}\}$, $\{L_{2,5},...,L_{6,5}\}$ and $\{L_{6,1},L_{6,2},L_{6,3},L_{5,3},L_{4,3},L_{3,3},L_{2,3},L_{1,3},L_{1,4},L_{1,5},L_{1,6}\}$.} 
    \label{fig:picking Lij 2}
    \end{figure}
    If we select the positions
    \begin{equation}\label{eq:picking Lij 2}
    \begin{aligned}
        \{L_{1,1},L_{2,1},...,L_{5,1},L_{2,6},L_{3,6},...,L_{6,6},&L_{1,2},L_{2,2},...,L_{5,2},L_{2,5},L_{3,5},...,L_{6,5},\\
        &~L_{6,1},L_{6,2},L_{6,3},L_{5,3},L_{4,3},L_{3,3},L_{2,3},L_{1,3},L_{1,4},L_{1,5},L_{1,6}\}
    \end{aligned}
    \end{equation}
    as in Figure \ref{fig:picking Lij 2} and the variables
    \begin{equation*}
        \{z_{5,5},z_{5,4},...,z_{5,1},z_{5,-1},z_{5,-2},...,z_{5,-5},z_{4,5},z_{4,4},...,z_{4,1},z_{4,-1},z_{4,-2},...,z_{4,-5},y_{-5},y_{-4},...,y_0,...,y_4,y_5\}
    \end{equation*}
    then, with respect to these choices of column and row indices, we obtain the following submatrix $M$ of a matrix representation of $d(m_3)_{\tau_3}$:
    \begin{equation*}
        M=\begin{bmatrix}
            -t_5I_5  & * &* & * & *\\
            * & t_5I_5 & * & * & *\\
            * & * & -t_4I_5 & * & *\\
            * & * & * & t_4I_5 & *\\
            * & * & * & * & I_{11}
        \end{bmatrix}
    \end{equation*}
    if we present the diagonal entries of degree at most $1$ of each diagonal block. 
    
    We claim that, when calculating $\det M$ by expanding the Leibniz formula, a nonzero monomial with variable part
    \begin{equation*}
        t_5^{10}t_4^{10}
    \end{equation*}
    appears only once so that the monomial cannot be cancelled out. When expanding the Leibniz formula, if we only keep track of contribution from the terms of degree at most $1$ in all factors, then such nonzero monomial  appears only once for the following reason. There is only one way to obtain the factor $t_5^{10}$, which is taking the diagonal elements of the upper-left corner of the submatrix $M$, because $t_5$ only appears in the positions $L_{p,1}$ and $L_{p,6}$. After obtaining the factor $t_5^{10}$ in this way, there is only way to obtain the factor $t_4^{10}$, which is taking the diagonal elements of the upper-left corner of the remaining submatrix in a similar reason. Finally, there is only way to obtain a nonzero constant from the remaining submatrix, which is taking the diagonal entries of the remaining $11 \times 11$ submatrix. Thus, there is only one way to make such monomial if we use only the terms of degree at most $1$. If we want to obtain such nonzero monomial using an entry of degree at least $2$, then we need to choose at least $12$ terms of $1$, but it is impossible because all entries containing $1$ in $M$ are on the last $11$ columns and last $11$ rows. Therefore, the claim is true. 
    
    The claim implies that $\det M$ is a nonzero polynomial. Note that the change of coordinates (\ref{eq:change of coordinates 2}) does not change the rank of $d(m_3)_{\tau_3}(X_3,X_4,X_5)$. Hence, $\operatorname{rank} ((dm_3)_{\tau_3}(X_3,X_4,X_5)) \geq 31$ for generic $(t_3,t_4,t_5) \in \mathbb{C}^{\times 3}$. This implies that $\dim \mu_3(\mathcal{T}_6) \geq 31$ by Lemma \ref{lemma:the rank of a differential gives a lower bound of dimension}. Combining with Lemma \ref{lemma:upper bound of the dimension, toeplitz}, we obtain that $\dim \mu_3(\mathcal{T}_6)=31$.
\end{example}

Now, we prove the main theorem of this section.

\begin{theorem}\label{thm:dimension for Toeplitz and Hankel structures}
    The dimension of $\mu_r(\mathcal{T}_n)$ (and so the dimension of $\mu_r(\mathcal{H}_n)$) satisfies the equality
    \begin{equation*}
        \dim \mu_r(\mathcal{T}_n) = \dim \mu_r(\mathcal{H}_n) = \min\{n^2, 2r(n-1)+1\}.
    \end{equation*}
\end{theorem}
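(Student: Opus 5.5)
The upper bound is Lemma \ref{lemma:upper bound of the dimension, toeplitz}, and Proposition \ref{prop:isomorphism between Toeplitz and Hankel factorization varieties} reduces the Hankel case to the Toeplitz case. So it remains to prove $\dim \mu_r(\mathcal{T}_n) \geq \min\{n^2, 2r(n-1)+1\}$. By Lemma \ref{lemma:the rank of a differential gives a lower bound of dimension}, applied at the smooth point $\tau_r$ of the linear space $\mathcal{T}_n^{\times r}$, it suffices to prove
\begin{equation*}
\rank\, d(m_r)_{\tau_r} \geq \min\{n^2, 2r(n-1)+1\}
\end{equation*}
for generic $(t_{n-r},\dots,t_{n-1})$. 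I split into cases. If $r \geq \lfloor n/2 \rfloor+1$, Theorem \ref{thm:previous results}.(\romannumeral2) gives $\mu_r(\mathcal{T}_n)=\mathbb{C}^{n\times n}$, since $I_n \in \mathcal{T}_n$ and Remark \ref{rmk:border version of whether contains the identity matrix} apply. The case $r=1$ is trivial, because $\mu_1=\mathcal{T}_n$ has dimension $2n-1$. For the range $2 \le r \le \lfloor n/2\rfloor$, one checks that $2r(n-1)+1 \le n^2$ in the borderline cases (for $r=n/2$ it gives $n^2-n+1$). Thus I must show the rank is at least $2r(n-1)+1$ there; when $r=\lfloor n/2\rfloor$ I would either run the same argument or note that a lower $r$ value suffices.

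For the rank bound I follow the two examples. Write $X_k$ for a Toeplitz variable matrix with entries $x_{k,j}$, $k=n-r,\dots,n-1$, so that
\begin{equation*}
d(m_r)_{\tau_r}(X)=\sum_k T_{n-r}\cdots X_k \cdots T_{n-1}.
\end{equation*}
Introduce the triangular change of coordinates $y_j=\sum_k x_{k,j}$ and $z_{n-1,j}=x_{n-r,j}+\dots+x_{n-2,j}$, and so on, down to $z_{n-r+1,j}=x_{n-r,j}$. Then expand in the small parameters $t_k$. The degree-zero part is the Toeplitz matrix of $y$. The part linear in $t_{n-i}$ (for $i=1,\dots,r-1$) is $z_{n-i}$ placed via $B_{n-i}-B_{-(n-i)}$. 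This part occupies columns $1,\dots,i$ (with a minus sign) and columns $n-i+1,\dots,n$.

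Next I choose $2(r-1)(n-1)+(2n-1)=2r(n-1)+1$ positions: for each $i=1,\dots,r-1$, entries $L_{1,i},\dots,L_{n-1,i}$ and $L_{2,n+1-i},\dots,L_{n,n+1-i}$, paired with the variables $z_{n-i,j}$ for $j\neq 0$. Then I add a hook of $2n-1$ positions in column/row $r$ and the last row/first row (as in Figure \ref{fig:picking Lij 2}), paired with $y_{-(n-1)},\dots,y_{n-1}$. The hook positions in the $t$-free part see each $y_j$ exactly once, so the last block has constant part $\pm I_{2n-1}$. Here $r\le \lfloor n/2\rfloor$ ensures the hook columns $r,\dots,n-r+1$ avoid the $z$-columns and the chosen column blocks are disjoint.

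The main obstacle is the determinant argument: showing that the monomial $\prod_{i=1}^{r-1} t_{n-i}^{2(n-1)}$ has a nonzero coefficient in $\det M$. I would argue as in the $(6,3)$ example, by induction on $i$. The variable $t_{n-1}$ appears linearly only in columns $1$ and $n$ (and in higher-order products with other $t$'s), so the full power $t_{n-1}^{2(n-1)}$ forces the diagonal choice of the first block. Removing it, the same holds for $t_{n-2}$, and so on. For the remaining hook block only constants are allowed, giving $\pm1$. Any use of an entry of total degree $\ge 2$ would then require more than $2n-1$ constant entries, which are confined to the last $2n-1$ rows and columns; this is impossible.

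The delicate point is that the higher-degree terms, with mixed products of several $t$'s, arise from expanding $T_{n-r}\cdots T_{n-1}$. I must check that these do not contribute the target monomial with a constant coefficient. A degree count does this: the target monomial has total degree $2(r-1)(n-1)$ and exactly that many rows lie outside the hook, so each such row must contribute exactly degree one. Once $\det M\not\equiv 0$ is established, the rank bound follows, and combined with Lemma \ref{lemma:upper bound of the dimension, toeplitz} this gives the equality.
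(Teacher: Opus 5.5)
Your proposal follows the paper's proof step for step: the same point $\tau_r$, the same triangular change of coordinates, the same $2r(n-1)+1$ positions and variables, and the same leading monomial $\prod_{i=1}^{r-1}t_{n-i}^{2n-2}$. The determinant step, however, does not go through as you wrote it. The paper's ``in a similar reason'' skips over the same point.

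Your degree count uses the wrong index. Constants are not confined to the hook rows: every row $L_{p,q}$ has the constant $1$ in column $y_{q-p}$. What is true is that the $z$-variables enter $L$ only with a $t$-factor, so constants occur only in the $2n-1$ $y$-columns. Counting columns then gives the statement you need. When you extract the part of total degree $2(r-1)(n-1)$ from a Leibniz term, every $z$-column must contribute its $t$-linear part and every $y$-column its constant part.

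Your description of the linear part is also incomplete. For $2\le i\le r-1$, the $t_{n-i}$-linear part contains, besides the right term you describe, the left term $(B_{n-i}-B_{-(n-i)})(X_{n-i+1}+\cdots+X_{n-1})$. This term lies in the $y$- and $z_{n-i+1}$-columns. It is harmless, but only because of the column count and because the $z_{n-i+1}$-columns were already used at the previous step.

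The genuine gap is the induction ``removing it, the same holds for $t_{n-2}$.'' For $i\ge 2$, the hook rows $L_{n,q}$ with $q<i$ and $L_{1,q}$ with $q>n-i+1$ carry $t_{n-i}$ in the \emph{selected} columns $z_{n-i,q-i}$ and $z_{n-i,q-n+i-1}$. For example, when $(n,r)=(6,3)$, $L_{6,1}$ meets $z_{4,-1}$ and $L_{1,6}$ meets $z_{4,1}$. So the power $t_{n-i}^{2n-2}$ alone does not force the diagonal of block $i$.

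The missing observation is this. Once blocks $1,\dots,i-1$ are fixed, $L_{n,q}$ is the only remaining row with a constant in $y_{q-n}$, and $L_{1,q}$ is the only one with a constant in $y_{q-1}$. Since every $y$-column must take a constant, these hook rows are unavailable to the $z_{n-i}$-columns, and block $i$ is forced. The leftover hook block is then a permutation matrix, which gives coefficient $\pm1$.

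Finally, for $r=\lfloor n/2\rfloor$, drop the alternative ``a lower $r$ value suffices,'' since the rank bound for $r-1$ is too small. The same argument applies directly, because $2r\le n$ keeps the column blocks $\{1,\dots,r-1\}$, $\{r\}$ and $\{n-r+2,\dots,n\}$ disjoint.
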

\begin{proof}
    The case of $r=1$ is trivial, and the case of $r \geq \lfloor n/2 \rfloor +1$ is from Theorem \ref{thm:previous results}.(\romannumeral2). We only prove the case where $1<r<\lfloor n/2 \rfloor+1$. By the optimality of the number $\lfloor n/2 \rfloor+1$ (see Example \ref{ex:Toeplitz}), we obtain that $\dim \mu_r(\mathcal{T}_n) < n^2$. We will prove that $\dim \mu_r(\mathcal{T}_n)=2r(n-1)+1$. 
    
    Consider the special point $\tau_r=(T_{n-r},...,T_{n-1})$ described by (\ref{eq:special points of pencils of Toeplitz matrices}) and (\ref{eq:the special point for differential}). Let $X_{n-r},...,X_{n-1}$ be matrices such that $[X_k]_{i,j}:=x_{k,j-i}$ for $k=n-r,...,n-1$ where the variables $x_{k,j-i}$'s are all independent. Then the matrix multiplication map $m_r:X^{\times r} \rightarrow \mathbb{C}^{n \times n}$ has the differential 
\begin{equation*}
    d(m_r)_{\tau_r}(X_{n-r},...,X_{n-1})=\sum_{k=1}^{r}T_{n-r}\cdots T_{n-k-1}X_{n-k}T_{n-k+1}\cdots T_{n-1}
\end{equation*}
at the special point $\tau_r$. Let $L$ be the result matrix of $d(m_r)_{\tau_r}(X_{n-r},...,X_{n-1})$. Considering the change of coordinates defined by
\begin{equation}\label{eq:change of basis in main theorem}
    \begin{bmatrix}
        y_j\\
        z_{n-1,j}\\
        \vdots\\
        z_{n-r+2,j}\\
        z_{n-r+1,j}
    \end{bmatrix}=\begin{bmatrix}
            1 & 1 & \cdots & 1 & 1\\
            1 & 1 & \cdots & 1 & 0\\
            \vdots & \vdots & \iddots &\iddots & \vdots\\
            1 & 1 & 0 & \cdots & 0\\
            1 & 0 & 0 & \cdots & 0
        \end{bmatrix}\begin{bmatrix}
            x_{n-r,j}\\
            x_{n-r+1,j}\\
            \vdots\\
            x_{n-2,j}\\
            x_{n-1,j}
        \end{bmatrix}~\text{ i.e., }~\begin{cases}
            y_j=x_{n-r,j}+\cdots+x_{n-2,j}+x_{n-1,j}\\
            z_{n-1,j}=x_{n-r,j}+\cdots+x_{n-2,j}\\
            \qquad\qquad\vdots\\
            z_{n-r+2,j}=x_{n-r,j}+x_{n-r+1,j}\\
            z_{n-r+1,j}=x_{n-r,j},
        \end{cases}
\end{equation}
 Consider the basis $\{B_{-n+1},...,B_{-1},B_0,B_1,\cdots,B_{n-1}\}$ described at (\ref{eq:basis of the space of Toeplitz matrices}). From the definition of $T_{n-i}$ described at (\ref{eq:special points of pencils of Toeplitz matrices}), we obtain that 
    \begin{equation}\label{eq:calculation of a term of Lpq}
        \begin{aligned}
            &T_{n-r}\cdots T_{n-k-1}X_{n-k}T_{n-k+1}\cdots T_{n-1}\\
            &=X_{n-k}\\
            &+(t_{n-r}(B_{n-r}-B_{-(n-r)})+\cdots+t_{n-k-1}(B_{n-k-1}-B_{-(n-k-1)}))X_{n-k}\\
            &+X_{n-k}(t_{n-k+1}(B_{n-k+1}-B_{-(n-k+1)})+\cdots+t_{n-1}(B_{n-1}-B_{-(n-1)}))\\
            &+(\text{higher degree terms in $t_{n-r},...,t_{n-k-1},t_{n-k+1},...,t_{n-1}$})
        \end{aligned}
    \end{equation}
    for each $k=1,...,r$. 
    
    Here, $B_0$ is the identity matrix. When $k$ is positive, then  multiplication on the left (resp. right) by $B_k$ shifts the rows (resp. columns) of a given matrix upward (resp. to the right) by $k$ positions. When $k$ is negative, then multiplication on the left (resp. right) by $B_k$ shifts the rows (resp. columns) of a given matrix downward (resp. to the left) by $|k|$ positions. Thus, the equality (\ref{eq:calculation of a term of Lpq}) and the change of coordinate (\ref{eq:change of basis in main theorem}) imply that the matrix $L$ can be described as 
    \begin{equation*}
        L=Y+T_{\text{linear}}+T_{\text{higher}}
    \end{equation*}
    where the matrices $Y, T_{\text{linear}}$, and $T_{\text{higher}}$ satisfy the following properties:
    \begin{itemize}
        \item [(\romannumeral1)] $Y_{p,q}=y_{q-p}$;
        \item [(\romannumeral2)] For each $k=1,...,r-1$, the nonzero coefficient of $t_{n-k}$ occurs only on the first $k$ columns and last $k$ columns of $T_{\text{linear}}$, and the coefficients of $t_{n-k}$ of the $(n-p+1,k)$-entry and $(p,n-k+1)$-entry of $T_{\text{linear}}$ are $-z_{n-k,p-1}$ and $z_{n-k,-(p-1)}$, respectively;
        \item [(\romannumeral3)] $T_{\text{higher}}$ consists of higher degree terms in $t_{n-r},...,t_{n-1}$ (if they exist).
    \end{itemize}
    \begin{figure}[t!]
    \includegraphics[width=1\textwidth]{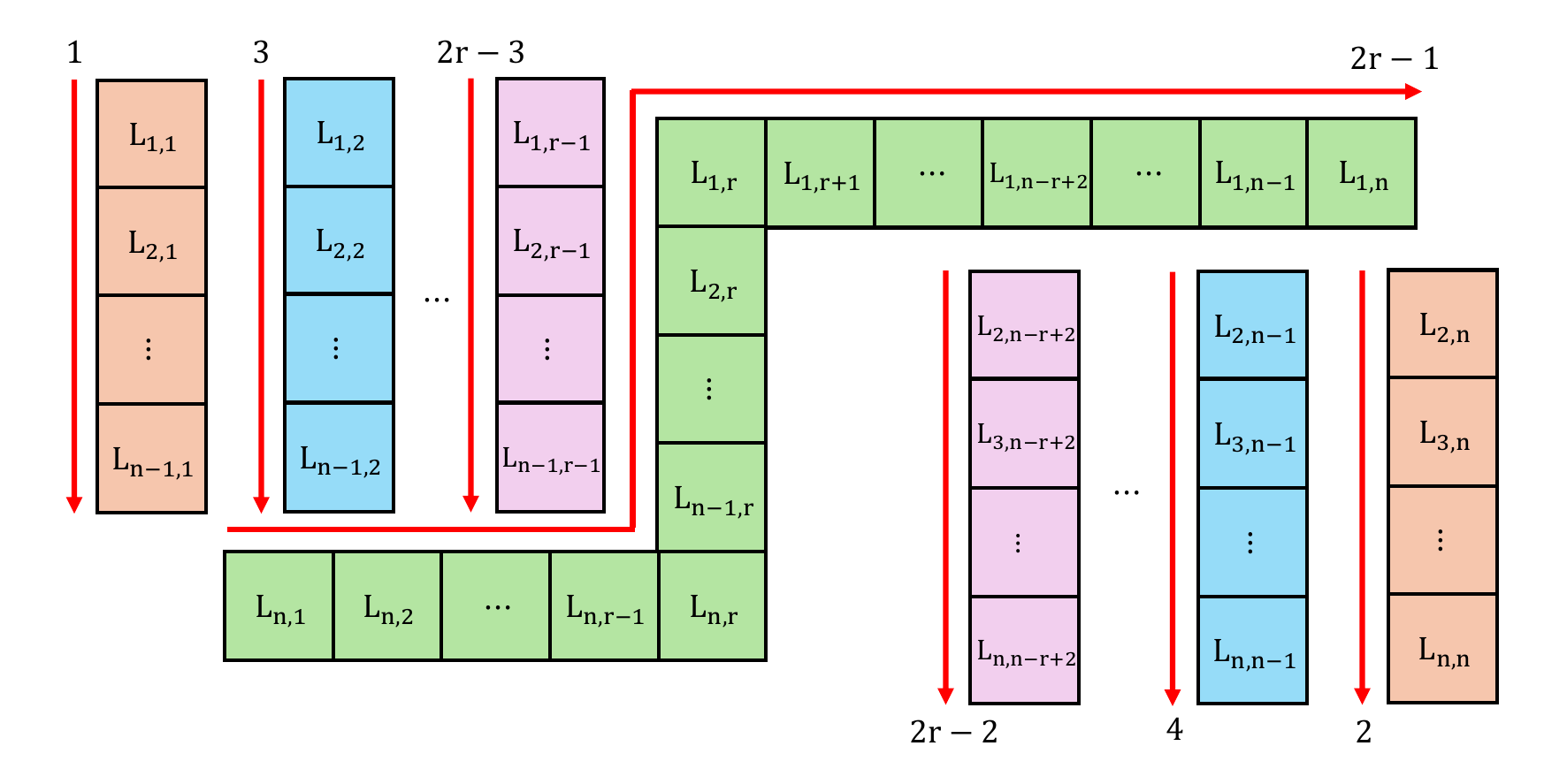}
    \caption{The way of selecting the ordered positions $L_{p,q}$: Select $\{L_{1,1}, L_{2,1},...,L_{n-1,1}\}$ on the first column, $\{L_{2,n},L_{3,n},...,L_{n,n}\}$ on the last column, $\{L_{1,2}, L_{2,2},...,L_{n-1,2}\}$ on the second column, $\{L_{2,n-1},L_{3,n-1},...,L_{n,n-1}\}$ on the second-to-last column. Keep similarly up to keeping $\{L_{1,r-1}, L_{2,r-1},...,L_{n-1,r-1}\}$ on the $(r-1)$-th column, $\{L_{2,n-r+2},L_{3,n-r+2},...,L_{n,n-r+2}\}$ on the $(n-r+2)$-th column, and $\{L_{n,1},L_{n,2},\cdots, L_{n,r-1}, L_{n,r}, L_{n-1,r},...,L_{2,r},L_{1,r},L_{1,r+1},...,L_{1,n}\}$ avoiding any intersection with previous selections and choosing only one position on each diagonal.} 
    \label{fig:picking Lij 3}
    \end{figure}
    We select the positions as in the following ordered set
    \begin{equation*}
    \begin{aligned}
        &(\{L_{1,1}, L_{2,1},...,L_{n-1,1}\} \sqcup \{L_{2,n},L_{3,n},...,L_{n,n}\})\\
        &\sqcup (\{L_{1,2}, L_{2,2},...,L_{n-1,2}\}\sqcup \{L_{2,n-1},L_{3,n-1},...,L_{n,n-1}\})\sqcup \cdots\\
        & \sqcup (\{L_{1,r-1}, L_{2,r-1},...,L_{n-1,r-1}\} \sqcup \{L_{2,n-r+2},L_{3,n-r+2},...,L_{n,n-r+2}\})\\
        &\sqcup \{L_{n,1},L_{n,2},\cdots, L_{n,r-1}, L_{n,r}, L_{n-1,r},...,L_{2,r},L_{1,r},L_{1,r+1},...,L_{1,n}\}
    \end{aligned}
    \end{equation*}
    as in Figure \ref{fig:picking Lij 3} for row indices. For $k=1,...,r-1$, let
    \begin{equation*}
        \mathcal{B}_{n-k}=\{z_{n-k,j}~|~j=n-1,n-2,...,1,-1,...,-(n-2),-(n-1)\}
    \end{equation*}
    and let
    \begin{equation*}
        \mathcal{B}_0=\{y_{j}~|~-(n-1) \leq j \leq (n-1)\},
    \end{equation*}
    be ordered sets. Take
    \begin{equation*}
        \mathcal{B}_{n-1} \sqcup \mathcal{B}_{n-2} \sqcup \cdots \sqcup \mathcal{B}_{n-(r-1)} \cup \mathcal{B}_0
    \end{equation*}
    as an ordered set for column indices. Then, with respect to these choices of column and row indices, we obtain the following $(2r(n-1)+1) \times (2r(n-1)+1)$ submatrix $M$ of a matrix representation of the linear map $d(m_r)_{\tau_r}$:
    \begin{equation*}
        M=\left[\begin{array}{cc|cc|c|cc|c}
            -t_{n-1}I_{n-1} & * & * & * & \cdots   & * &  * & * \\
            * & t_{n-1}I_{n-1} & * & * & \cdots  & * &  * & * \\ \hline
            * & * & -t_{n-2}I_{n-1} & * & \cdots & * & * & * \\
            * & * & * & t_{n-2}I_{n-1} & \cdots & * & * & * \\ \hline
            \vdots & \vdots &  \vdots & \vdots & \ddots & \vdots & \vdots & \vdots \\ \hline
            * & * & * & * & \cdots & -t_{n-(r-1)}I_{n-1} & * & *\\
             * & * & * & * & \cdots & * & t_{n-(r-1)}I_{n-1} & *\\ \hline
             * & * & * & * &\cdots & * & * & I_{2n-1}
        \end{array}\right]
    \end{equation*}
     if we present the diagonal entries of degree at most $1$ of each diagonal block. 
    
    We claim that, when calculating $\det M$ by expanding the Leibniz formula, a nonzero monomial with variable part
    \begin{equation*}
        t_{n-1}^{2n-2}t_{n-2}^{2n-2}\cdots t_{n-(r-1)}^{2n-2}
    \end{equation*}
    appears only once so that the monomial cannot be cancelled out. When expanding the Leibniz formula, if we only keep track of contribution from the terms of degree at most $1$ in all factors, then such nonzero monomial appears only once for the following reason. There is only one way to obtain the factor $t_{n-1}^{2n-2}$, which is taking the diagonal entries of the upper-left corner of the submatrix $M$, because $t_{n-1}$ only appears in the positions $L_{p,1}$ and $L_{p,n}$. After obtaining the factor $t_{n-1}^{2n-2}$ in this way, there is only way to obtain the factor $t_{n-2}^{2n-2}$, which is taking the diagonal entries of the upper-left corner of the remaining submatrix in a similar reason. After obtaining the factor $t_{n-(r-1)}^{2n-2}$ from these successive arguments, there is only way to obtain a nonzero constant from the remaining submatrix, which is taking the diagonal entries on the remaining $(2n-1) \times (2n-1)$ submatrix. Thus, there is only one way to make such monomial if we use only the terms of degree at most $1$. If we want to obtain such nonzero monomial using an entry of degree at least $2$, then we need to choose at least $2n$ terms of $1$, but it is impossible because all entries containing $1$ in $M$ are on the last $2n-1$ columns and last $2n-1$ rows. Therefore, the claim is true, and the monomial must be $(-1)^{(r-1)(n-1)}t_{n-1}^{2n-2}t_{n-2}^{2n-2}\cdots t_{n-(r-1)}^{2n-2}$. 

    The claim implies that $\det M$ is a nonzero polynomial. Note that the change of coordinates (\ref{eq:change of basis in main theorem}) does not change the rank of $d(m_r)_{\tau_r}(X_{n-r},...,X_{n-1})$. Hence, we have that 
    \begin{equation*}
        \rank(d(m_r)_{\tau_r}(X_{n-r},...,X_{n-1})) \geq 2r(n-1)+1
    \end{equation*}
    for generic $(t_{n-r},...,t_{n-1}) \in \mathbb{C}^{\times r}$. This implies that $\dim \mu_r(\mathcal{T}_n) \geq 2r(n-1)+1$ by Lemma \ref{lemma:the rank of a differential gives a lower bound of dimension}. Combining with Lemma \ref{lemma:upper bound of the dimension, toeplitz}, we obtain that $\dim \mu_r(\mathcal{T}_n)=2r(n-1)+1$.
\end{proof}

\subsubsection{Further Examples}

\begin{example}[Bidiagonal matrix structure]\label{ex:dimension for bidiagonal structure}
    As we mentioned in Example \ref{ex:bidiagonal}, for $2 \leq k \leq n$, $\mu_{k-1}(\mathcal{UBD}_n)$ (resp. $\mu_{k-1}(\mathcal{LBD}_n)$) is the affine variety of $n \times n$ upper (resp. lower) $k$-diagonal matrices. Hence, for $2 \leq k \leq n$, we obtain that
    \begin{equation*}
        \dim \mu_{k-1}(\mathcal{UBD}_n)= \dim \mu_{k-1}(\mathcal{LBD}_n)= n+(n-1)+\cdots+(n-k+1)=kn-\frac{k(k-1)}{2}.
    \end{equation*}
\end{example}

In \cite{ye2017new}, the author proved that $\mu_{k-1}(\mathcal{UBD}_n)$ is the affine variety of $n \times n$ upper $k$-diagonal matrices by induction and Proposition \ref{prop:the rank of differential and dominance}. We will prove the following theorem for the tridiagonal matrix structure in a similar way. A matrix $A=[a_{ij}]$ is said to be $(2k+1)$-diagonal matrix if $a_{ij}=0$ if $|i-j| \geq k+1$. 

\begin{theorem}[Tridiagonal matrix structure]\label{thm:dimension for tridiagonal structure}
    Consider $\mathcal{TD}_n$ the variety of $n \times n$ tridiagonal matrices. Let $1 \leq r \leq n-1$. Then $\mu_{r}(\mathcal{TD}_n)$ is the variety of $n \times n$ $(2r+1)$-diagonal matrices. In particular,
    \begin{equation*}
        \dim \mu_{r}(\mathcal{TD}_n)=n+2\sum_{l=1}^{r}(n-l),
    \end{equation*}
    and hence $\mathbf{FL}_{\text{gen}}(\mathcal{TD}_n)=n-1$ and $\mathbf{FL}_{\text{max}}(\mathcal{TD}_n)\leq 4(n-1)$.
\end{theorem}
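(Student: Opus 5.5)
Let $W_r$ denote the linear space of $n\times n$ $(2r+1)$-diagonal matrices. Since a product of $r$ tridiagonal matrices is $(2r+1)$-diagonal, we have $\mu_r^0(\mathcal{TD}_n)\subseteq W_r$. As $W_r$ is a linear subspace, hence Zariski closed and irreducible, this gives $\mu_r(\mathcal{TD}_n)\subseteq W_r$ with $\dim W_r=n+2\sum_{l=1}^r(n-l)$. Equality will follow once we show that $m_r:\mathcal{TD}_n^{\times r}\to W_r$ is dominant. By Proposition \ref{prop:the rank of differential and dominance} (or Lemma \ref{lemma:the rank of a differential gives a lower bound of dimension} together with irreducibility), it suffices to exhibit one point $(M_1,\dots,M_r)$ at which $d(m_r)$ has rank $\dim W_r$. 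The count $r(3n-2)\ge \dim W_r$ is consistent, so the dimension check is not an obstruction.

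I would argue by induction on $r$, following the bidiagonal argument in \cite{ye2017new}. The case $r=1$ is trivial. For the inductive step, I take the point $(M_1,\dots,M_{r-1},I_n)$, where $(M_1,\dots,M_{r-1})$ is a point at which $d(m_{r-1})$ surjects onto $W_{r-1}$, and put $P=M_1\cdots M_{r-1}$. The image of the differential is then
\begin{equation*}
\operatorname{im} d(m_{r-1})\cdot I_n+P\cdot\mathcal{TD}_n \;=\; W_{r-1}+P\,\mathcal{TD}_n .
\end{equation*}
We therefore need $P\,\mathcal{TD}_n$ to span $W_r$ modulo $W_{r-1}$, which amounts to covering the two new diagonals $\pm r$, of total dimension $2(n-r)$. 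Write $Z=L+D+U$ with $L$, $D$, $U$ the strictly lower, diagonal and strictly upper parts. Modulo $W_{r-1}$, the product $PZ$ on diagonal $+r$ depends only on $P_{+(r-1)}U$, where $P_{+(r-1)}$ is the top superdiagonal of $P$. That part is the map $(u_i)\mapsto (p_iu_{i+r-1})$, which is surjective onto the $n-r$ entries if the top superdiagonal entries of $P$ are nonzero. The same holds for diagonal $-r$ using $L$. So I would choose $M_1,\dots,M_{r-1}$ generic, so that both extreme diagonals of $P$ have nonzero entries; this is an open condition, compatible with the generic full-rank condition from the inductive hypothesis. I would first check that the inductive full-rank locus is open and nonempty, so that a common point exists. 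Then the image contains $W_{r-1}$ and surjects onto $W_r/W_{r-1}$, hence equals $W_r$.

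The main obstacle is making the inductive step rigorous. First, the base point of the induction must be chosen in the irreducible product so that full rank of the earlier factors and nonvanishing of the extreme diagonals of $P$ hold simultaneously; this follows from both being nonempty Zariski-open conditions in $\mathcal{TD}_n^{\times(r-1)}$. Second, one must verify that the extreme diagonals of a product of $r-1$ tridiagonal matrices are products of the extreme entries, which is nonzero generically. With $r=n-1$ we get $W_{n-1}=\mathbb{C}^{n\times n}$, so $\mathbf{FL}_{\text{gen}}(\mathcal{TD}_n)\le n-1$. The reverse inequality holds because $W_{r}\subsetneq\mathbb{C}^{n\times n}$ for $r<n-1$. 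Finally, $\mathcal{TD}_n$ contains the diagonal matrices, and a product of a diagonal matrix with a tridiagonal matrix is tridiagonal, so Proposition \ref{prop:the number of structured matrices needed to express every matrix} gives $\mathbf{FL}_{\text{max}}(\mathcal{TD}_n)\le 4(n-1)$; the diagonal factor is absorbed into an adjacent factor, as in the remark after that proposition.
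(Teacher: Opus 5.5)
Your proof is correct and follows essentially the same route as the paper: induction on $r$ and surjectivity of a differential via Proposition \ref{prop:the rank of differential and dominance}, where the two new diagonals $\pm r$ come from the off-diagonal entries of the tridiagonal tangent direction multiplied against the extreme diagonals $\pm(r-1)$ of the partial product, and the diagonal factor is absorbed in the same way to get the bound on $\mathbf{FL}_{\text{max}}$. The only difference is bookkeeping: the paper factors $m_r$ through $\mathcal{TD}_n\times\mathcal{D}_{2r-1}$, checks the differential $(X,Y)\mapsto XB+Y$ at $(I_n,B)$ with $B$ having ones on the diagonals $\pm(r-1)$, and then composes dominant maps, which spares it your step of intersecting the inductive full-rank locus with the locus where the extreme diagonals of $P$ are nonzero.
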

\begin{proof}
    Let $\mathcal{D}_{2k+1}$ denote the affine variety of $n \times n$ $(2k+1)$-diagonal matrices. Note that, for $1 \leq r \leq n-1$, a product of $r$ $n \times n$ tridiagonal matrices is in $\mathcal{D}_{2r+1}$. We prove that the matrix multiplication map $m_r:\mathcal{TD}_n^{\times r} \rightarrow \mathcal{D}_{2r+1}$ is dominant by induction on $r$. If $r=1$, the identity map $m_1:\mathcal{TD}_n \rightarrow \mathcal{TD}_n$ is clearly dominant. Assume that the map $m_{r-1}$ is dominant, where $1 \leq r-1 \leq n-2$. The map $m_r$ can be considered as the composition
    \begin{equation*}
        m_r:\mathcal{TD}_n \times \mathcal{TD}_n^{\times (r-1)} \xrightarrow{(id,m_{r-1})}\mathcal{TD}_n \times \mathcal{D}_{2r-1} \xrightarrow{m} \mathcal{D}_{2r+1}, 
    \end{equation*}
    where $m$ denotes the matrix multiplication map. By the induction hypothesis, $(id, m_{r-1})$ is dominant. We show that $m$ is dominant, by considering its differential 
    \begin{equation*}
        dm_{(A,B)}:(X,Y) \mapsto XB+AY.
    \end{equation*}
    at  $(A,B) \in \mathcal{TD}_{n} \times \mathcal{D}_{2r-1}$ for 
    \begin{equation*}
        A=I_n~\text{ and }~B=[b_{ij}]~\text{ where }~b_{ij}=\begin{cases}
            1 & \text{if }|i-j|=r-1\\
            0 & \text{otherwise}.
        \end{cases}
    \end{equation*}
    Let $M \in \mathcal{D}_{2r+1}$. Then $M=M'+M''$, where $M' \in \mathcal{D}_{2r-1}$ and $M'' \in \mathcal{D}_{2r+1} \setminus \mathcal{D}_{2r-1}$. Since $M''\in \mathcal{D}_{2r+1} \setminus \mathcal{D}_{2r-1}$, then $M''$ has only nonzero entries on upper and lower $r$-th diagonals. That is, for $M''=[m_{ij}'']$, only the $2(n-r)$ entries $m_{1,r+1}'',m_{2,r+2}'',...,m_{n-r,n}''$ and $m_{r+1,1}'',m_{r+2,2}'',...,m_{n,n-r}''$ can be nonzero. If we let $X=[x_{ij}]$ satisfy
    \begin{equation*}
        x_{1,2}=m_{1,r+1}'',~ x_{2,3}=m_{2,r+2}'',~..., ~x_{n-r,n-r+1}=m_{n-r,n}'',
    \end{equation*}
    and 
    \begin{equation*}
        x_{r+1,r}=m_{r+1,1}'',~ x_{r+2,r+1}=m_{r+2,2}'',~...,~ x_{n,n-1}=m_{n,n-r}'',
    \end{equation*}
    then we have that $XB=M''+E$ for some $E \in D_{2r-1}$. Hence, if we let $Y=M'-E$, then $dm_{(A,B)}(X,Y)=M$. Thus, the $dm_{(A,B)}$ is surjective, and so $m$ is dominant by Proposition \ref{prop:the rank of differential and dominance}. 
    
    By Proposition \ref{prop:the number of structured matrices needed to express every matrix}, $\mathbf{FL}_{\text{gen}}(\mathcal{TD}_n)=n-1$ implies every $n \times n$ matrix is a product of $4(n-1)$ tridiagonal matrices and a diagonal matrix. Since a product of a diagonal matrix and a tridiagonal matrix is tridiagonal, then $\mathbf{FL}_{\text{max}}(\mathcal{TD}_n)\leq 4(n-1)$.
\end{proof}

\begin{theorem}[Skew-symmetric matrix structure] 
    Consider $\Lambda_n$ the affine variety of $n \times n$ skew-symmetric matrices. For each $n,r \geq 1$, the dimension of $\mu_r(\Lambda_n)$ is as in Table \ref{table:dimensions, skew-symmetric structure}.
    \begin{table}[t!]
        \centering
        \begin{tabular}{|c|c|c|}
            \hline
            $n$ & $r$ & $\dim \mu_r(\Lambda_n)$ \\ \hline
            $2$ & $\geq 1$ & $1$ \\ \hline
            $\geq 3$ & $1$ & $n(n-1)/2$ \\ \hline
            $\geq 3$ & $2$ & $n^2-\lfloor 3n/2 \rfloor$ \\ \hline
            $3$ & $3$ & $7$ \\
            $3$ & $\geq 4$ & $8$\\ \hline
            $4$ & $3$ & $13$\\
            $4$ & $4$ & $15$\\
            $4$ & $\geq 5$ & $16$\\ \hline
            $6$ & $3$ & $35$ \\
            $6$ & $\geq 4$ & $36$ \\ \hline 
            $(\text{odd})\geq 5$ & $\geq 3$ & $n^2-1$\\ \hline
            $(\text{even})\geq 8$ & $\geq 3$ & $n^2$ \\ \hline
        \end{tabular}
        \caption{The dimensions of $\mu_r(\Lambda_n)$ for all $n,r$.}
        \label{table:dimensions, skew-symmetric structure}
    \end{table}
\end{theorem}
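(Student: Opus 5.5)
The plan is to handle the rows of Table \ref{table:dimensions, skew-symmetric structure} in groups. For each nontrivial entry I would get a lower bound from Lemma \ref{lemma:the rank of a differential gives a lower bound of dimension} and a matching upper bound either from Lemma \ref{lemma:fiber dimension} (exhibiting positive-dimensional fibres) or from an explicit invariant that cuts the image down.

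\emph{Trivial rows and large $r$.} For $n=2$, $\Lambda_2=\mathbb{C}E$ with $E=\begin{bmatrix}0&1\\-1&0\end{bmatrix}$ and $E^2=-I_2$. So $\mu_r(\Lambda_2)$ is the line $\mathbb{C}E$ or $\mathbb{C}I_2$, and has dimension $1$. For $r=1$, $\mu_1(\Lambda_n)=\Lambda_n$ has dimension $n(n-1)/2$. For even $n\ge 8$ and $r\ge3$, Theorem \ref{thm:previous results}.(\romannumeral6) gives dominance, hence dimension $n^2$; the cases $(4,\ge5)$ and $(6,\ge4)$ come from the same theorem. Monotonicity in $r$ follows from $I_{2n}\in\mu_2^0(\Lambda_{2n})$ and Remark \ref{rmk:border version of whether contains the identity matrix}, after first checking the small $r$ directly. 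For odd $n\ge5$ and $r\ge3$, every product is singular, which gives the upper bound $n^2-1$. The same theorem says a generic singular matrix is a product of three skew-symmetric matrices, which gives equality. The case $(3,\ge4)$ is the same argument with $r=4$.

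\emph{The case $r=2$.} If $A,B\in\Lambda_n$ and $P\in \operatorname{GL}_n$, then $PAP^T$ and $P^{-T}BP^{-1}$ are again skew-symmetric, and their product is $P(AB)P^{-1}$. Hence $\mu_2^0(\Lambda_n)$ is stable under conjugation. For the upper bound I would use the classical fact that $AB$ with $A,B$ skew has every nonzero eigenvalue of even multiplicity; for odd $n$ there is additionally a zero eigenvalue. Write $n=2m$ or $n=2m+1$. A generic element then has $m$ distinct double eigenvalues (plus $0$ if $n$ is odd), and its centralizer has dimension at least $4m$ (resp.\ $4m+1$). Counting conjugacy class dimension plus the $m$ eigenvalue parameters gives $n^2-4m-\epsilon+m=n^2-\lfloor 3n/2\rfloor$. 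For the lower bound, the block-diagonal matrix $\operatorname{diag}(\lambda_1 I_2,\dots,\lambda_m I_2)$ (with a final $0$ if $n$ is odd) equals $\operatorname{diag}(E,\dots,E)\cdot\operatorname{diag}(-\lambda_1E,\dots,-\lambda_mE)$. So its whole conjugacy class lies in the image, and these classes fill a set of exactly the claimed dimension.

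\emph{Sporadic rows $(3,3)$, $(4,3)$, $(4,4)$, $(6,3)$.} For the lower bounds I would evaluate $d(m_r)$ at an explicit (e.g.\ random integer) point and certify its rank: $7$, $13$, $15$, $35$. For $(3,3)$, scaling fibres as in Lemma \ref{lemma:upper bound of the dimension, toeplitz} give the upper bound $9-2=7$. The remaining upper bounds are the main obstacle, since the naive count $r\dim\Lambda_n-(r-1)$ is too large. For $n=4$, my first attempt would use $\Lambda_4=\Lambda^+\oplus\Lambda^-$, the two commuting copies of $\mathfrak{sl}_2$ coming from $\mathbb{C}^{4\times4}\cong\mathbb{H}\otimes\mathbb{H}^{op}$. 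In these coordinates a product of three or four elements has a constrained form, and I expect this to produce $3$ (resp.\ $1$) independent polynomial relations, equivalently an extra positive-dimensional family of fibres. For $(6,3)$ I would look for a single invariant polynomial vanishing on all products $ABC$, using the congruence invariance $ABC\mapsto P(ABC)P^T$ available for odd $r$ (indeed $PAP^T\cdot P^{-T}BP^{-1}\cdot PCP^T=P(ABC)P^T$), the relation $(ABC)^T=-CBA$, and Pfaffian identities. If no invariant is apparent, I would find it by interpolation or by the displacement-rank method of Section \ref{subsection:Defining equations via Displacement}, and then verify that it vanishes on $\mu_3^0(\Lambda_6)$.
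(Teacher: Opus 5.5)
The genuine gap is in the sporadic rows $(4,3)$, $(4,4)$ and $(6,3)$. There the count $r\dim\Lambda_n-(r-1)$ equals $16$, $21$ and $43$. So Lemma~\ref{lemma:fiber dimension} with the rescaling fibres gives only the trivial bounds $16$, $16$ and $36$. To get the table you must prove that $\mu_3(\Lambda_4)$ has codimension $3$, and that $\mu_4(\Lambda_4)$ and $\mu_3(\Lambda_6)$ are proper hypersurfaces. Equivalently, you must produce nonzero polynomials vanishing on all such products, or show generic fibres of dimension $5$, $9$, $10$ rather than the $2$, $3$, $2$ that come from rescaling.

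Your proposal only says what you expect from the $\mathfrak{sl}_2\oplus\mathfrak{sl}_2$ splitting and what you would look for when $n=6$. These are exactly the entries that follow neither from parameter counting nor from Theorem~\ref{thm:previous results}, so they remain unproved. The displacement-rank fallback cannot close this. For $n\ge 3$, every non-scalar $Q$ admits some $M\in\Lambda_n$ with $\operatorname{rank}\nabla_Q(M)\ge 2$. Then Theorem~\ref{thm:lower bound from displacement rank, one matrix} with three factors only gives $\operatorname{rank}\nabla_Q(A)\le 6$, which is vacuous in $\mathbb{C}^{6\times 6}$. The paper does not derive these rows either. It takes every entry with $r\ge 3$ from Theorem~\ref{thm:previous results} and \cite[Example 5.6]{ye2017new}, and proves only the $r=2$ row. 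So you must either import that computation or actually exhibit the relations or the extra fibre families.

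There are also three smaller points.

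First, for odd $n$ the identity trick is unavailable: every product is singular, so $I_n\notin\mu_k(\Lambda_n)$ for any $k$. Your equality for $n\ge 5$, $r\ge 4$ and for $n=3$, $r\ge 5$ is therefore not justified as written. One fix is to use $\mu_{r+1}(\Lambda_n)\supseteq\overline{\mu_r^0(\Lambda_n)\cdot\Lambda_n}$. The map $(S,K)\mapsto SK$ sends $\{\det=0\}\times\Lambda_n$ onto every matrix $T$ of rank $n-1$. To see this, choose $K\in\Lambda_n$ with $\ker K=\ker T$, set $S(Kx)=Tx$ on $\operatorname{im}K$, and send a complementary vector into $\operatorname{im}T$. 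Hence $\mu_r(\Lambda_n)=\{\det=0\}$ propagates to $\mu_{r+1}(\Lambda_n)$.

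Second, for even $n$ the step from $r$ to $r+1$ is best done with an invertible skew factor $C$, writing $A=(AC^{-1})C$ as in the companion case. "Checking small $r$ directly" is not a finite check when $n\ge 8$ ranges over infinitely many sizes.

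Third, your $r=2$ row is fine and runs parallel to the paper. Getting the lower bound from conjugation-stability of $\mu_2^0(\Lambda_n)$ and the explicit factorization of $\operatorname{diag}(\lambda_1I_2,\dots,\lambda_mI_2)$ is cleaner than the paper's kernel computation for $d(m_2)$. For the upper bound, however, you (like the paper) need that the Jordan blocks of each nonzero eigenvalue occur in pairs; even multiplicity alone is not enough. A double eigenvalue with a single $2\times 2$ block contributes only $2$ to the centralizer. Indeed, the matrices whose characteristic polynomial is merely $p(t)^2$ (times $t$ for odd $n$) fill a set of dimension $n^2-n+m$, larger than the target.
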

\begin{proof}
    Considering Theorem \ref{thm:previous results} and the results in \cite[Example 5.6]{ye2017new}, it suffices to calculate $\dim \mu_2(\Lambda_n)$ for each $n \geq 1$. We claim that the image of the matrix multiplication map $m_2:\Lambda_n^{\times 2} \rightarrow \mathbb{C}^{n \times n}$ has dimension $n^2-\lfloor 3n/2 \rfloor$. Consider its differential 
    \begin{equation*}
        d(m_2)_{(A,B)}:(X,Y) \mapsto XB+AY
    \end{equation*}
    at a point $(A,B) \in \Lambda_n^{\times 2}$. We prove the claim when $n$ is even, because the case of odd $n$ can be proved similarly. 
    
    Let $n=2m$ for some $m \geq 1$. Let $J_{2m}$ be the block diagonal matrix with all diagonal blocks being $J_2$, where
        \begin{equation*}
            J_2=\begin{bmatrix}
                0 & 1\\
                -1 & 0
            \end{bmatrix},
        \end{equation*}
        that is,
        \begin{equation*}
            J_{2m}=\left[\begin{array}{cc|c|cc}
                0 & 1 & & & \\
                -1 & 0 & & & \\ \hline
                & & \ddots & & \\ \hline
                & & & 0 & 1\\
                & & & -1  & 0
            \end{array}\right].
        \end{equation*}
        Let $D$ be the block diagonal matrix with $i$-th  diagonal block being $\lambda_i I_2$, that is,
        \begin{equation}\label{eq:diagonal matrix in the argument of skew-symmetric matrix structure}
            D=\left[\begin{array}{cc|c|cc}
                \lambda_1 & 0 & & & \\
                0 & \lambda_1 & & & \\ \hline
                & & \ddots & & \\ \hline
                & & & \lambda_m & 0\\
                & & & 0  & \lambda_m
            \end{array}\right].
        \end{equation}
        Let
        \begin{equation*}
            A=J_{2m}~\text{ and } B=-J_{2m}D.
        \end{equation*}
        We calculate the kernel of $d(m_2)_{(A,B)}$:
        \begin{equation*}
            \{(X,Y) \in \Lambda_{2m}^{\times 2}~|~d(m_2)_{(A,B)}(X,Y)=0\}.
        \end{equation*}
        Consider the skew-symmetric matrices $X$ and $Y$ as $m \times m$ block matrices with $(i,j)$-block being the $2 \times 2$ matrix $X_{ij}$ and $Y_{ij}$, respectively. Then we obtain the equivalences:
        \begin{equation}\label{eq:equivalences}
        \begin{aligned}
            &d(m_2)_{(A,B)}(X,Y)=0\\
            &\Leftrightarrow -XJD+JY=0\\
            &\Leftrightarrow -\lambda_j X_{ij}J_2 + J_2Y_{ij}=0,~\text{ for all }~1 \leq i,j \leq m.
        \end{aligned}
        \end{equation}
        Since $X$ and $Y$ are skew-symmetric matrices, then we have that $X_{ji}=-X_{ij}^T$ and $Y_{ji}=-Y_{ij}^T$. Hence, from the last equalities on (\ref{eq:equivalences}), we obtain that
        \begin{equation*}
            (\lambda_i-\lambda_j)X_{ij}=0~\text{ for all }~1 \leq i,j \leq m.
        \end{equation*}
        This implies that $X_{ij}=0$ whenever $i \neq j$, and hence $X$ must be a block diagonal matrix. Note that $Y$ is completely determined by $X$. Thus, the kernel of $d(m_2)_{(A,B)}$ has dimension $m$. By Lemma \ref{lemma:the rank of a differential gives a lower bound of dimension}, we obtain that
        \begin{equation*}
            \dim \mu_2(\Lambda_{2m}) \geq \rank(d(m_2)_{(A,B)}) \geq 2 \cdot \frac{2m(2m-1)}{2}-m=4m^2-3m.
        \end{equation*}

        Now, we prove that $\dim \mu_2(\Lambda_{2m}) \leq 4m^2-3m$. Note that the set $\{A'B'~|~A'~\text{is invertible in}~\Lambda_{2m},~B' \in \Lambda_{2m}\}$ is a Zariski dense subset of $\mu_2(\Lambda_{2m})$, and let $U$ be a nonzero subset of this set which is Zariski open in $\mu_2(\Lambda_{2m})$. An element $A'B' \in U$ ($A',B' \in \Lambda_{2m}$) has the characteristic polynomial
        \begin{equation*}
            \chi_{A'B'}(t)=\det(tI_{2m}-A'B')=\det(A')\det(t(A')^{-1}-B').
        \end{equation*}
        Here, $t(A')^{-1}-B'$ is a skew-symmetric matrix, and so $\det(t(A')^{-1}-B')$ is the square of its Pfaffian. This implies that the matrix $A'B'$, that is, a generic element of $\mu_2(\Lambda_{2m})$ is conjugate to a diagonal matrix $D$ described at (\ref{eq:diagonal matrix in the argument of skew-symmetric matrix structure}), and so is in the set
        \begin{equation*}
            \mathcal{O}:=\{gDg^{-1}~|~g \in \operatorname{GL}_{2m},~D~\text{is of the form (\ref{eq:diagonal matrix in the argument of skew-symmetric matrix structure})}\}.
        \end{equation*}
        For a fixed $D$ of the form (\ref{eq:diagonal matrix in the argument of skew-symmetric matrix structure}), the stabilizer subgroup of $\operatorname{GL}_{2m}$ is $\operatorname{GL}_2^{\times m}$, and so the orbit $\{gDg^{-1}~|~g \in \operatorname{GL}_{2m}\}$ has dimension $4m^2-4m$ by the orbit-stabilizer dimension theorem (see \cite[Corollary 5.5.6]{MR1642713}). Since there are $m$ parameters for the choice of $D$, we have that
        \begin{equation*}
            \dim \mathcal{O} =4m^2-4m+m =4m^2-3m.
        \end{equation*}
        Since a generic element of $\mu_2(\Lambda_{2m})$ is in $\mathcal{O}$, then we have that
        \begin{equation*}
            \dim \mu_2(\Lambda_{2m}) \leq \dim \mathcal{O} = 4m^2-3m.
        \end{equation*}
        Therefore, $\dim \mu_2(\Lambda_{2m}) = 4m^2-3m$.

        If $n=2m+1$ for some $m \geq 1$, then we can obtain that $\dim \mu_2(\Lambda_{2m+1})=(2m+1)^2-(4m+1)+m$ in a similar way. 
\end{proof}

\begin{theorem}[Companion matrix structure]\label{thm:dimension for companion structure}
    Consider $\mathcal{C}_n$ the affine variety of $n \times n$ companion matrices. For each $n,r \geq 1$, the dimension of $\mu_r(\mathcal{C}_n)$ is
    \begin{equation*}
        \dim \mu_r(\mathcal{C}_n)=\min\{rn,n^2\}.
    \end{equation*}
\end{theorem}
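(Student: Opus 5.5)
The plan is to prove the two inequalities separately. The upper bound is immediate: $\mathcal{C}_n$ is an affine linear subspace of dimension $n$ (the last column is free and the rest is fixed). So $\dim \mathcal{C}_n^{\times r}=rn$, and Lemma \ref{lemma:fiber dimension} gives $\dim \mu_r(\mathcal{C}_n)\le \min\{rn,n^2\}$. For $r\ge n$, Theorem \ref{thm:previous results}.(\romannumeral7) says $m_n$ is dominant. I would upgrade this to every $r\ge n$ by exhibiting a product of companion matrices equal to $I_n$. For instance, the cyclic shift $P$ (companion matrix with last column $e_1$) satisfies $P^n=I_n$. Then $\mu_s(\mathcal{C}_n)\subseteq\mu_{s+n}(\mathcal{C}_n)$ by Remark \ref{rmk:border version of whether contains the identity matrix}. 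For $n\le r<2n$, I would instead rerun the differential argument below, or simply observe that the surjective differential at a point of $\mathcal{C}_n^{\times n}$ stays surjective after appending extra factors, since we can perturb only the first $n$ factors.

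The substantive case is $1\le r\le n$, where I must show $\dim\mu_r(\mathcal{C}_n)\ge rn$. By Lemma \ref{lemma:the rank of a differential gives a lower bound of dimension}, it suffices to find a point $(C_1,\dots,C_r)$ at which $d(m_r)$ is injective on $T\mathcal{C}_n^{\times r}$. Tangent vectors to $\mathcal{C}_n$ are matrices $Z_i=z_ie_n^T$ supported in the last column. Hence
\begin{equation*}
d(m_r)(Z_1,\dots,Z_r)=\sum_{i=1}^r C_1\cdots C_{i-1}\,z_i\,e_n^T C_{i+1}\cdots C_r .
\end{equation*}
I would choose all $C_i=P$, the cyclic shift. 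Then $e_n^TP^{\,r-i}=e_{\sigma(i)}^T$ for distinct indices as $i$ ranges over $1,\dots,r$. This is because $e_n^TP^k$ runs through $n$ distinct standard basis rows for $k=0,\dots,n-1$, and here $r-i\le n-1$. Consequently the $i$-th summand is $P^{i-1}z_i$ placed in column $\sigma(i)$. The summands live in distinct columns and $P^{i-1}$ is invertible, so the map is injective of rank $rn$. The same computation at $r=n$ gives full rank $n^2$, which re-proves dominance there.

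The main thing to verify carefully is the column-index bookkeeping: that $e_n^TP^k$ is indeed a distinct basis row for each $0\le k\le n-1$. This is where I expect the only real care to be needed, since it depends on the orientation of the shift (ones on the subdiagonal, with $e_1$ in the last column, so that $e_n^T P=e_{n-1}^T$ up to the correct convention). If this convention check fails, I would instead take generic last columns, which only increases rank, and compute the rank at a random point. Combining both bounds gives $\dim\mu_r(\mathcal{C}_n)=\min\{rn,n^2\}$.
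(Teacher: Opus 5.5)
Your proof is correct, but it takes a different route from the paper for the key lower bound when $1\le r\le n$.

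\textbf{The paper's route.} It introduces the affine linear spaces $\mathcal{V}_k$: matrices whose first $n-k$ columns are $e_{k+1},\dots,e_n$ and whose last $k$ columns are free. It notes $\mu_r^0(\mathcal{C}_n)\subseteq\mathcal{V}_r$ and then proves $m_r:\mathcal{C}_n^{\times r}\to\mathcal{V}_r$ dominant by induction. A generic $A_r\in\mathcal{V}_r$ is written as $A_{r-1}C_r$. Here $A_{r-1}\in\mathcal{V}_{r-1}$ is determined by the first $n-1$ columns of $A_r$ and is generically an invertible product of $r-1$ companion matrices. The last column $c$ of $C_r$ is then obtained by solving $A_{r-1}c=A_re_n$.

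\textbf{Your route.} You compute the differential once, at the single point $(P,\dots,P)$. Your bookkeeping is right, so the fallback to a random point is unnecessary. Indeed $e_n^TP^k=e_{n-k}^T$ for $0\le k\le n-1$. Hence the $i$-th summand is $P^{i-1}z_i$ placed in column $n-r+i$, and the differential is injective of rank $rn$. Lemma~\ref{lemma:the rank of a differential gives a lower bound of dimension} applies because every point of the affine space $\mathcal{C}_n^{\times r}$ is smooth.

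\textbf{What each buys.} Your argument is shorter and avoids both the induction and the genericity and invertibility of $A_{r-1}$. At $r=n$ it also independently reproves the generic part of Theorem~\ref{thm:previous results}.(\romannumeral7). The paper's argument gives more: the identification $\mu_r(\mathcal{C}_n)=\mathcal{V}_r$, which is used later for $\deg\mu_r(\mathcal{C}_n)=1$ and the linear defining ideal, plus a constructive way to factor a generic element. You get that identification for free. Since $\mu_r^0(\mathcal{C}_n)\subseteq\mathcal{V}_r$ and $\mathcal{V}_r$ is irreducible of dimension $rn$, your dimension count forces $\mu_r(\mathcal{C}_n)=\mathcal{V}_r$.

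\textbf{The case $r\ge n$.} Perturbing only the first $n$ factors at $(P,\dots,P)$ gives $d(m_n)_{(P,\dots,P)}(\cdot)\,P^{r-n}$. This is surjective because $P^{r-n}$ is invertible, so it already covers every $r\ge n$. The detour through $P^n=I_n$ is therefore redundant. This step is the infinitesimal version of the paper's inclusion $\mu_{r+1}(\mathcal{C}_n)\supseteq\mu_r(\mathcal{C}_n)\,C$ for an invertible companion matrix $C$.
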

\begin{proof}
    Although $\mathcal{C}_n$ does not contain the identity matrix $I_n$, $\mu_r(\mathcal{C}_n)=\mathbb{C}^{n \times n}$ implies $\mu_{r+1}(\mathcal{C}_n)=\mathbb{C}^{n \times n}$, because there is an invertible matrix $C \in \mathcal{C}_n$: for any $A \in \mathbb{C}^{n \times n}$, it can be expressed as $A=(AC^{-1})C$, where $AC^{-1} \in \mathbb{C}^{n \times n}$ and $C \in \mathcal{C}_n$. Hence, we only prove this theorem for $1 \leq r \leq n$, where $n=\mathbf{FL}_{\text{gen}}(\mathcal{C}_n)$ (by Theorem \ref{thm:previous results}.(\romannumeral7) and Example \ref{ex:companion matrix structure}).
    
    Let $\mathcal{V}_k$ denote the affine variety of $n \times n$ matrices of the following form
    \begin{equation*}
        \begin{bmatrix}
            0 & 0 &\cdots & 0 & c_{1,n-k+1} & \cdots & c_{1,n}\\
            \vdots & \vdots & \ddots & \vdots & \vdots & \ddots & \vdots\\
            0 & 0 &\cdots & 0 & c_{n-k,n-k+1} & \cdots & c_{n-k,n}\\
            1 & 0 & \cdots & 0 & c_{n-k+1,n-k+1} & \cdots & c_{n-k+1,n}\\
            0 & 1 & \cdots & 0 & c_{n-k+2,n-k+1} & \cdots & c_{n-k+2,n}\\
            \vdots & \vdots & \ddots & \vdots & \vdots & \ddots & \vdots\\
             0 & 0 & \cdots & 1 & c_{n,n-k+1} & \cdots & c_{n,n}\\
        \end{bmatrix}.
    \end{equation*}
    Note that, for $1 \leq r \leq n$, a product of $r$ $n \times n$ companion matrices is in $\mathcal{V}_r$. We prove that the matrix multiplication map $m_r:\mathcal{C}_n^{\times r} \rightarrow \mathcal{V}_{r}$ is dominant by induction on $r$. If $r=1$, the identity map $m_1:\mathcal{C}_n \rightarrow \mathcal{C}_n$ is clearly dominant. For $1 \leq r-1 \leq n-1$, assume that the map $m_{r-1}$ is dominant, and let $U$ be a nonempty subset of the image  $m_{r-1}(\mathcal{C}_n^{\times (r-1)})$ which is Zariski open in $\mathcal{V}_{r-1}$. Let 
    \begin{equation*}
        A_r := \left[\begin{array}{ccc|ccc|c}
             0  &\cdots & 0 & a_{1,n-r+1} & \cdots & a_{1,n-1} & a_{1,n}\\
            \vdots  & \ddots & \vdots & \vdots & \ddots & \vdots & \vdots\\
            0  &\cdots & 0 & a_{n-r,n-r+1} & \cdots & a_{n-r,n-1} & a_{n-r,n}\\
            1  & \cdots & 0 & a_{n-r+1,n-r+1} & \cdots & a_{n-r+1,n-1} & a_{n-r+1,n}\\
            \vdots  & \ddots & \vdots & \vdots & \ddots & \vdots & \vdots \\
             0 &  \cdots & 1 & a_{n,n-r+1} & \cdots & a_{n,n-1} & a_{n,n}\\
        \end{array}\right] \in \mathcal{V}_r
    \end{equation*}
    satisfy that
    \begin{equation*}
        A_{r-1}:=\left[\begin{array}{cccc|ccc}
             0 & 0 &\cdots & 0 & a_{1,n-r+1} & \cdots & a_{1,n-1} \\
            \vdots & \vdots  & \ddots & \vdots & \vdots & \ddots & \vdots \\
            0 & 0  &\cdots & 0 & a_{n-r-1,n-r+1} & \cdots & a_{n-r-1,n-1} \\
            1 & 0  &\cdots & 0 & a_{n-r,n-r+1} & \cdots & a_{n-r,n-1} \\
            0  & 1 & \cdots & 0 & a_{n-r+1,n-r+1} & \cdots & a_{n-r+1,n-1}\\
            \vdots & \vdots & \ddots & \vdots & \vdots & \ddots & \vdots \\
             0 & 0 &  \cdots & 1 & a_{n,n-r+1} & \cdots & a_{n,n-1} \\
        \end{array}\right] \in \mathcal{V}_{r-1}
    \end{equation*}
    is an invertible element of $U$, so that we can say $A_r$ is a generic element in $\mathcal{V}_r$. Since $A_{r-1} \in U$, then $A_{r-1}=C_1\cdots C_{r-1}$ for some $C_1,...,C_{r-1} \in \mathcal{C}_n$. Furthermore, since $A_{r-1}$ is invertible, we can take a companion matrix 
    \begin{equation*}
        C_r:= \begin{bmatrix}
        0 & 0 & \cdots & 0 & c_1\\
        1 & 0 & \cdots & 0 & c_2\\
        \vdots & \vdots & \ddots & \vdots & \vdots \\
        0 & 0 & \cdots & 1 & c_n
    \end{bmatrix}.
    \end{equation*}
    such that
    \begin{equation*}
        A_{r-1}\begin{bmatrix}
            c_1 \\ c_2 \\ \vdots \\ c_n
        \end{bmatrix}=\begin{bmatrix}
            a_{1,n} \\ a_{2,n} \\ \vdots \\ a_{n,n}
        \end{bmatrix}
    \end{equation*} 
    and hence
    \begin{equation*}
        A_r=A_{r-1}C_r=C_1\cdots C_{r-1}C_r.
    \end{equation*}
    Thus, a generic element in $\mathcal{V}_r$ is a product of $r$ companion matrices, that is, $m_r:\mathcal{C}_n^{\times r} \rightarrow \mathcal{V}_r$ is dominant.
\end{proof}

\begin{remark}
    By \cite[Lemma 5.1]{ye2017new} (see Example \ref{ex:dimension for bidiagonal structure}) and the proofs of Theorems \ref{thm:dimension for tridiagonal structure} and \ref{thm:dimension for companion structure}, we can determine not only the dimension but also the defining ideal of $\mu_r(X)$ for all $r \geq 1$, for $X$ being $\mathcal{UBD}_n$, $\mathcal{LBD}_n$, $\mathcal{TD}_n$, or $\mathcal{C}_n$. Note that the defining ideals are all generated by affine linear polynomials in these cases.
\end{remark}

\subsection{Degree computations via numerical algebraic geometry}\label{subsection:Degree}

When an affine variety is defined by some affine linear polynomials, then this affine variety must have the degree $1$. Thus, from \cite[Lemma 5.1]{ye2017new} (see Example \ref{ex:dimension for bidiagonal structure}) and the proofs of Theorems \ref{thm:dimension for tridiagonal structure} and \ref{thm:dimension for companion structure}, we obtain that 
\begin{equation*}
    \deg \mu_r(X)=1~\text{ for all $r \geq 1$,}~\text{ when $X$ is $\mathcal{UBD}_n$, $\mathcal{LBD}_n$, $\mathcal{TD}_n$, or $\mathcal{C}_n$}.
\end{equation*}

From now on, we consider the cases of $X=\mathcal{T}_n$ or $\mathcal{H}_n$. Since $\mu_r(\mathcal{T}_n) \cong \mu_r(\mathcal{H}_n)$ by an affine linear isomorphism for all $r,n \geq 1$ by Proposition \ref{prop:isomorphism between Toeplitz and Hankel factorization varieties} and the degree is invariant under an affine linear isomorphism, then it suffices to calculate $\deg \mu_r(\mathcal{T}_n)$. We explain how to obtain computational evidence for the degree using numerical algebraic geometry. If $\mu_r(\mathcal{T}_n)=\mathbb{C}^{n \times n}$, then its degree is $1$. Assume that $\mu_r(\mathcal{T}_n)\neq\mathbb{C}^{n \times n}$. By Theorem \ref{thm:dimension for Toeplitz and Hankel structures}, we have that
\begin{equation*}
    \dim \mu_r(\mathcal{T}_n)=2r(n-1)+1.
\end{equation*}
Hence, in order to calculate $\deg \mu_r(\mathcal{T}_n)$ following the definition, we need to choose a generic affine linear subspace of codimension $2r(n-1)+1$ in $\mathbb{C}^{n \times n}$. We can take such a generic subspace of codimension $2r(n-1)+1$ by taking generic $2r(n-1)+1$ affine linear equations on $\mathbb{C}[x_{11},...,x_{nn}]$. Let $L$ be a generic affine linear subspace of codimension $2r(n-1)+1$ defined by generic $2r(n-1)+1$ affine linear equations 
\begin{equation*}
    l_1=...=l_{2r(n-1)+1}=0.
\end{equation*}
Let $U$ be a Zariski open subset of $\mu_r(\mathcal{T}_n)$, which is contained in the image $\mu_r^0(\mathcal{T}_n)=m_r(\mathcal{T}_n^{\times r})$ of the matrix multiplication map $m_r:\mathcal{T}^{\times r} \rightarrow \mathbb{C}^{n \times n}$. Then we can say that 
\begin{equation*}
    L \cap (\mu_r(\mathcal{T}_n) \setminus U)=\emptyset
\end{equation*}
from the genericity of $L$, that is, $L$ only intersects with $U$ in $\mu_r(\mathcal{T}_n)$. In order to count
the number of intersection points counted with multiplicity, we can consider the system of equations
\begin{equation}\label{eq:system of essential polynomial equations}
    l_1(X_1\cdots X_r)=\cdots =l_{2r(n-1)+1}(X_1\cdots X_r)=0
\end{equation}
where $X_1,...,X_r$ are Toeplitz matrices with independent variables. If $r \geq 2$, there are infinitely many solutions of this system because a generic fiber $m_r^{-1}(A)$ of the matrix multiplication map $m_r:\mathcal{T}_n^{\times r} \rightarrow \mu_r(\mathcal{T}_n)$ has the dimension
\begin{equation*}
    \dim m_r^{-1}(A)=r-1
\end{equation*}
by Theorem \ref{thm:dimension for Toeplitz and Hankel structures} and Lemma \ref{lemma:fiber dimension}. Here, note that a generic fiber $m_r^{-1}(A)$ for $A=M_1\cdots M_r \in U$ contains the affine variety
\begin{equation}\label{eq:torus}
    \{(\lambda_1 M, \cdots, \lambda_r M_r)~|~\lambda_1,...,\lambda_r \in \mathbb{C}~\text{ and }~\lambda_1\cdots \lambda_r=1\}.
\end{equation}
We emphasize that if a generic fiber is irreducible, then it must be the same with the affine variety (\ref{eq:torus}), but we still don't know whether a generic fiber is irreducible or not. It implies that it may consist of some components of the form (\ref{eq:torus}). It is known that the number of such components for a generic fiber is fixed, say $\delta$. Hence, if we fix the parameters $\lambda_1,...,\lambda_{r}$ (satisfying $\lambda_1\cdots \lambda_r=1$) on each fiber in the solution set of the system (\ref{eq:system of essential polynomial equations}), then we can calculate $\delta \cdot \deg \mu_r(\mathcal{T}_n)$. In practice, we pick $r-1$ generic homogeneous linear polynomials $l_1',...,l_{r-1}'$ where each $l_i'$ depends only on the variables of $X_i$. Then the equations
\begin{equation*}
    l_1'(X_1)=\cdots=l_{r-1}'(X_{r-1})=1
\end{equation*}
choose only one point on each component of the form (\ref{eq:torus}) in the solution set of the system (\ref{eq:system of essential polynomial equations}). In summary, we will calculate $\delta \cdot \deg \mu_r(\mathcal{T}_n)$ by solving the system of equations
\begin{equation}\label{eq:target system}
    \begin{cases}
        l_1(X_1\cdots X_r)=\cdots =l_{2r(n-1)+1}(X_1\cdots X_r)=0,\\
        l_1'(X_1)=\cdots=l_{r-1}'(X_{r-1})=1
    \end{cases}
\end{equation}
for generic affine linear polynomials $l_1,...,l_{2r(n-1)+1} \in \mathbb{C}[x_{11},...,x_{nn}]$ and $l_1',...,l_{r-1}'$ such that each $l_i'$ depends only on the variables of $X_i$. 

For large $r,n \geq 1$, it is hard to solve the system (\ref{eq:target system}) algebraically. Instead, we can use numerical algebraic geometry. For example, we can estimate the finite solutions of the system (\ref{eq:target system}) using the homotopy continuation method (see \cite{MR2881262}). Without any additional effort, we only can obtain computational evidence for the lower bound of $\delta \cdot \deg \mu_r(\mathcal{T}_n)$, because there would be a solution with multiplicity at least $2$. However, if we have that the solutions all have the multiplicity $1$ by investigating the rank of the Jacobian matrix at each solution, then we can obtain computational evidence for the $\delta \cdot \deg \mu_r(\mathcal{T}_n)$. Furthermore, we can obtain computational evidence for the value $\delta$ for a generic fiber in a similar way.

\begin{example}
    From the Macaulay2 computation, we can obtain computational evidence of
    \begin{equation*}
        \delta\cdot \deg \mu_2(\mathcal{T}_4) = 74~\text{and}~\delta=1,~\text{and so}~\deg \mu_2(\mathcal{T}_4) = 74.
    \end{equation*}
\end{example}

We leave the theoretical determination of the degree of $\mu_r(\mathcal{T}_n)$ for future work. In addition, we also leave the case of $X=\Lambda_n$ for future work.

\subsection{Defining equations via displacement rank}\label{subsection:Defining equations via Displacement}

Let $X$ be an affine variety in $\mathbb{C}^{n \times n}$. A Zariski open subset of $\mu_r(X)$ is parametrized by the matrix multiplication map. Hence, in principle, one can obtain the defining ideal of $\mu_r(X)$ by applying elimination of the parameters (see \cite[Chapter 3]{MR4952933}). 

\begin{example}[Skew-symmetric matrix structure]\label{ex:defining equations for skew-symmetric matrix structure}
    We find the defining ideal of $\mu_2(\Lambda_3)$ and show that $\Lambda_3 \nsubseteq \mu_2(\Lambda_3)$ using the ideal. From the matrix multiplication map $m_2:\Lambda_3^{\times 2} \rightarrow \mathbb{C}^{3 \times 3}$,
    \begin{equation*}
    \begin{aligned}
        &\left(\begin{bmatrix}
            0 & a_{12} & a_{13}\\
            -a_{12} & 0 & a_{23}\\
            -a_{13} & -a_{23} & 0
        \end{bmatrix}, \begin{bmatrix}
            0 & b_{12} & b_{13}\\
            -b_{12} & 0 & b_{23}\\
            -b_{13} & -b_{23} & 0
        \end{bmatrix}\right) \\
        &\mapsto \begin{bmatrix}
            -a_{12}b_{12}-a_{13}b_{13} & -a_{13}b_{23} & a_{12}b_{23}\\
            -a_{23}b_{13} & -a_{12}b_{12}-a_{23}b_{23} & -a_{12}b_{13}\\
            a_{23}b_{12}& -a_{13}b_{12} & -a_{13}b_{13}-a_{23}b_{23}
        \end{bmatrix},
    \end{aligned}
    \end{equation*}
    we have the parametrizations
    \begin{equation*}
        x_{11}=-a_{12}b_{12}-a_{13}b_{13},~ x_{12}=-a_{13}b_{23},~ \cdots,~ x_{33}=-a_{13}b_{13}-a_{23}b_{23},
    \end{equation*}
    and so we obtain the system of $9$ polynomial equations and corresponding ideal $\tilde{I}$ in $\mathbb{C}[a_{ij},b_{ij},x_{ij}~|~1 \leq i,j \leq 3]$. By eliminating the variables $a_{ij}$'s and $b_{ij}$'s in the ideal $\tilde{I}$, we obtain the following defining ideal $I$ of $\mu_2(\Lambda_3)$ in $\mathbb{C}[x_{ij}~|~1 \leq i,j \leq 3]$:
    \begin{equation*}
    \begin{aligned}
        I=\langle & 2x_{13}x_{31}+2x_{23}x_{32}-x_{11}x_{33}-x_{22}x_{33}+x_{33}^2,~2x_{12}x_{31}-x_{11}x_{32}+x_{22}x_{32}+x_{32}x_{33},\\
        &x_{11}x_{31}-x_{22}x_{31}+2x_{21}x_{32}+x_{31}x_{33},~2x_{13}x_{21}-x_{11}x_{23}+x_{22}x_{23}+x_{23}x_{33},\\
        &2x_{12}x_{21}-x_{11}x_{22}+x_{22}^2+2x_{23}x_{32}-x_{22}x_{33},~x_{11}x_{21}+x_{21}x_{22}+2x_{23}x_{31}-x_{21}x_{33},\\
        &x_{11}x_{13}-x_{13}x_{22}+2x_{12}x_{23}+x_{13}x_{33},~x_{11}x_{12}+x_{12}x_{22}+2x_{13}x_{32}-x_{12}x_{33},\\
        &x_{11}^2-x_{22}^2-4x_{23}x_{32}+2x_{22}x_{33}-x_{33}^2\rangle.
    \end{aligned}
    \end{equation*}
    Note that the first generator $2x_{13}x_{31}+2x_{23}x_{32}-x_{11}x_{33}-x_{22}x_{33}+x_{33}^2$ of $I$ does not vanish at the $3 \times 3$ skew-symmetric matrix 
    \begin{equation*}
        \begin{bmatrix}
            0 & 0 & 1\\
            0 & 0 & 0 \\
            -1 & 0 & 0
        \end{bmatrix},
    \end{equation*}
    and so $\Lambda_3 \nsubseteq \mu_2(\Lambda_3)$. 

    Similarly, we can obtain that the defining ideal $I$ of $\mu_2(\Lambda_4)$ contains the polynomial
    \begin{equation*}
        2x_{13}x_{41}+2x_{23}x_{42}-x_{11}x_{43}-x_{22}x_{43}+x_{33}x_{43}+x_{43}x_{44}
    \end{equation*}
    and this polynomial does not vanish at the skew-symmetric matrix 
    \begin{equation*}
        \begin{bmatrix}
            0 & 0 & 1 & 1\\
            0 & 0 & 0 & 0 \\
            -1 & 0 & 0 & 0\\
            -1 & 0 & 0 & 0
        \end{bmatrix}.
    \end{equation*}
    Hence, $\Lambda_4 \nsubseteq \mu_2(\Lambda_4)$. 
\end{example}

Note that the elimination is based on Gr\"obner basis computations, which are often computationally expensive. On the other hand, the \emph{displacement rank approach} has been actively used to study structured matrix computations (see \cite{MR533501, MR1843842}). We will use the approach to obtain some defining equations of $\mu_r(X)$.

\begin{definition}[see {\cite[Section 1.3]{MR1843842}}]
    Let $P,Q \in \mathbb{C}^{n \times n}$. The \emph{displacement operator (of Sylvester type) with respect to $P$ and $Q$}, denoted by $\nabla_{P,Q}$, is defined by
    \begin{equation*}
        \nabla_{P,Q}:\mathbb{C}^{n \times n} \rightarrow \mathbb{C}^{n \times n},~A \mapsto PA-AQ.
    \end{equation*}
    The image $\nabla_{P,Q}(A)$ of $A$ is called the \emph{displacement of $A$ with respect to $P$ and $Q$}, and $\rank(\nabla_{P,Q}(A))$ is called the \emph{displacement rank of $A$ with respect to $P$ and $Q$}. If $P=Q$, then we simply denote $\nabla_{Q}$ instead of $\nabla_{P,Q}$.
\end{definition}

\begin{theorem}\label{thm:lower bound from displacement rank, one matrix}
    Let $Q \in \mathbb{C}^{n \times n}$. Assume that $\rank(\nabla_{Q}(M)) \leq r$ for each $M \in X$. Then, for every $A \in \mathbb{C}^{n \times n}$,
    \begin{equation*}
        \underline{\mathbf{FL}}_X(A) \geq \left\lceil \frac{\rank(\nabla_{Q}(A))}{r} \right\rceil.
    \end{equation*}
\end{theorem}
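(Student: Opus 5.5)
The plan is to first bound the displacement rank of exact products and then pass to the Zariski closure using lower semicontinuity of rank. The key algebraic identity is the Leibniz-type rule for the Sylvester displacement $\nabla_Q(A)=QA-AQ$:
\begin{equation*}
    \nabla_Q(M_1\cdots M_s)=\sum_{i=1}^{s} M_1\cdots M_{i-1}\,\nabla_Q(M_i)\,M_{i+1}\cdots M_s .
\end{equation*}
This follows by telescoping: insert $\pm M_1\cdots M_{i-1}QM_i\cdots M_s$ for $i=2,\dots,s$. Alternatively, induct on $s$ using $\nabla_Q(BC)=\nabla_Q(B)C+B\nabla_Q(C)$.

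The steps, in order, are as follows.

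First, for $M_1,\dots,M_s\in X$, each summand above has rank at most $\rank(\nabla_Q(M_i))\le r$. By subadditivity of rank, $\rank(\nabla_Q(A))\le sr$ for every $A\in\mu_s^0(X)$.

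Second, the set $\{A\in\mathbb{C}^{n\times n}~|~\rank(\nabla_Q(A))\le sr\}$ is Zariski closed. It is defined by the vanishing of all $(sr+1)\times(sr+1)$ minors of $QA-AQ$, which are polynomials in the entries of $A$ because $\nabla_Q$ is linear. Hence it contains $\mu_s(X)=\overline{\mu_s^0(X)}^{\text{Zar}}$. Note that no irreducibility of $X$ is needed here. One could alternatively argue through Corollary \ref{cor:limit expression}, but that corollary requires irreducibility, so the minor argument is cleaner.

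Third, suppose $\underline{\mathbf{FL}}_X(A)=s<\infty$. Then $A\in\mu_s(X)$, so $\rank(\nabla_Q(A))\le sr$, which gives $s\ge \rank(\nabla_Q(A))/r$. Since $s$ is an integer, $s\ge\lceil \rank(\nabla_Q(A))/r\rceil$. If $\underline{\mathbf{FL}}_X(A)=\infty$, the inequality holds trivially. If $r=0$, then every element of $\mu_s(X)$ has zero displacement, and the statement should be read with the convention that the right side is $\infty$ when $\nabla_Q(A)\neq 0$; I would remark that $r\ge 1$ is implicitly assumed.

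There is no real obstacle. The only point requiring care is the passage from $\mu_s^0(X)$ to its closure, which the closedness of the rank-bounded locus handles.
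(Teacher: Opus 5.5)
Your proposal is correct and follows essentially the same route as the paper: the Leibniz-type identity for $\nabla_Q$ combined with rank subadditivity bounds the displacement rank on $\mu_k^0(X)$ by $kr$. That bound then passes to $\mu_k(X)$ because the $(kr+1)$-minors of $\nabla_Q$ are polynomial (determinantal) conditions. Your version is somewhat more careful than the paper's. You write the $k$-factor identity explicitly, and you use $\le kr$ where the paper writes $=k\cdot r$. You also observe that irreducibility of $X$ is never needed, and you flag the degenerate case $r=0$.
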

\begin{proof}
    Let $A \in \mathbb{C}^{n\times n}$. First, we prove $\mathbf{FL}_X(A) \geq \left\lceil \rank(\nabla_{Q}(A))/r \right\rceil$. If $A=M_1M_2$, then
    \begin{equation*}
        \begin{aligned}
            \nabla_{Q}(A)&=Q(M_1M_2)-(M_1M_2)Q\\
            &=(QM_1)M_2-(M_1Q)M_2+M_1(QM_2)-M_1(M_2Q)\\
            &=(QM_1-M_1Q)M_2+M_1(QM_2-M_2Q)\\
            &=\nabla_{Q}(M_1)\cdot M_2+M_1 \cdot \nabla_{Q}(M_2),
        \end{aligned}
    \end{equation*}
    and hence $\rank(\nabla_{Q}(A))\leq \rank(\nabla_{Q}(M_1))+\rank(\nabla_{Q}(M_2))$. Similarly, if $A$ is of $X$-factorization length $k$ so that $A=M_1\cdots M_k$ for $M_1,...,M_k \in X$, we obtain that
    \begin{equation*}
        \rank(\nabla_{Q}(A))=\rank(\nabla_{Q}(M_1\cdots M_k)) \leq \rank(\nabla_{Q}(M_1))+\cdots+\rank(\nabla_{Q}(M_k)) = k \cdot r,
    \end{equation*}
    and hence
    \begin{equation*}
        \mathbf{FL}_X(A) = k \geq \left\lceil \frac{\rank(\nabla_{Q}(M_1\cdots M_k))}{r} \right\rceil.
    \end{equation*}
    Thus, the condition that $\rank(\nabla_{Q}(M_1\cdots M_k)) \leq k \cdot r$ gives some determinantal relations for $\mu_k^0(X)$, and so $\mu_k(X)$ also satisfies the determinantal relations. Therefore, we have $\underline{\mathbf{FL}}_X(A) \geq \lceil \rank(\nabla_{Q}(A))/r \rceil$.
\end{proof}

\begin{theorem}\label{thm:lower bound from displacement rank, two matrices}
     Let $P,Q,R \in \mathbb{C}^{n \times n}$. Assume that $\rank(\nabla_{Q}(M)) \leq r$ for each $M \in X$.
    \begin{itemize}
        \item [(\romannumeral1)] If $\rank(\nabla_{P,Q}(M)) \leq r$ for each $M \in X$, then, for every $A \in \mathbb{C}^{n \times n}$,
    \begin{equation*}
        \underline{\mathbf{FL}}_X(A) \geq \left\lceil \frac{\rank(\nabla_{P,Q}(A))}{r} \right\rceil.
    \end{equation*}
        \item [(\romannumeral2)] If $\rank(\nabla_{Q,R}(M)) \leq r$ for each $M \in X$, then, for every $A \in \mathbb{C}^{n \times n}$,
    \begin{equation*}
        \underline{\mathbf{FL}}_X(A) \geq \left\lceil \frac{\rank(\nabla_{Q,R}(A))}{r} \right\rceil.
    \end{equation*}
    \end{itemize}
\end{theorem}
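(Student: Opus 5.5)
We will follow the proof of Theorem \ref{thm:lower bound from displacement rank, one matrix}, replacing the Leibniz-type identity for $\nabla_Q$ with a mixed one. The key observation is that for Sylvester displacements we have, for any $P,Q,R$,
\begin{equation*}
    \nabla_{P,R}(M_1M_2)=PM_1M_2-M_1M_2R=(PM_1-M_1Q)M_2+M_1(QM_2-M_2R)=\nabla_{P,Q}(M_1)\,M_2+M_1\,\nabla_{Q,R}(M_2).
\end{equation*}
We first prove the inequality for $\mathbf{FL}_X(A)$. Then we pass to the border length by the same determinantal argument used before.

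For (\romannumeral1), let $A=M_1\cdots M_k$ with $M_i\in X$. Applying the identity with $(P,Q,Q)$ repeatedly, and splitting off the first factor each time, gives
\begin{equation*}
    \nabla_{P,Q}(A)=\nabla_{P,Q}(M_1)\,M_2\cdots M_k+M_1\,\nabla_{Q}(M_2\cdots M_k).
\end{equation*}
We then expand $\nabla_Q(M_2\cdots M_k)$ as in the proof of Theorem \ref{thm:lower bound from displacement rank, one matrix}. The result is $\nabla_{P,Q}(A)$ written as a sum of $k$ terms. Each term is a product of matrices with exactly one displacement factor, namely $\nabla_{P,Q}(M_1)$ or $\nabla_Q(M_i)$ for $i\ge 2$. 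Each such factor has rank at most $r$ by hypothesis, so $\rank(\nabla_{P,Q}(A))\le kr$.

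For (\romannumeral2), we split off the last factor instead:
\begin{equation*}
    \nabla_{Q,R}(A)=\nabla_Q(M_1\cdots M_{k-1})\,M_k+M_1\cdots M_{k-1}\,\nabla_{Q,R}(M_k).
\end{equation*}
The same bound $kr$ follows. In both cases we conclude $\mathbf{FL}_X(A)\ge\lceil \rank(\nabla(A))/r\rceil$ for the relevant operator $\nabla$.

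To pass to $\underline{\mathbf{FL}}_X$, we argue as follows. On $\mu_k^0(X)$ we have shown $\rank(\nabla_{P,Q}(A))\le kr$, respectively $\rank(\nabla_{Q,R}(A))\le kr$. The map $A\mapsto\nabla_{P,Q}(A)$ is linear, so this condition is the vanishing of all $(kr+1)$-minors of a matrix whose entries are linear in $A$. That is a Zariski closed condition, so it also holds on $\mu_k(X)$. Hence if $A\in\mu_k(X)$ then $k\ge \rank(\nabla(A))/r$. Taking the minimal such $k$ gives the claim. Alternatively, one can use Corollary \ref{cor:limit expression} together with lower semicontinuity of rank, which works when $X$ is irreducible. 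The closedness argument is preferable since it needs no irreducibility.

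No step is a real obstacle. The only point needing care is tracking which hypothesis is used on which factor. In (\romannumeral1) the mixed operator $\nabla_{P,Q}$ must fall on the first factor, and in (\romannumeral2) $\nabla_{Q,R}$ must fall on the last. This is why both hypotheses, $\nabla_Q$ and the mixed one, are needed. The case $k=1$ is immediate from the mixed hypothesis alone.
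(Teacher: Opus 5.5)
Your proposal is correct and follows the paper's proof. It uses the same mixed identities $\nabla_{P,Q}(M_1M_2)=\nabla_{P,Q}(M_1)M_2+M_1\nabla_Q(M_2)$ and $\nabla_{Q,R}(M_1M_2)=\nabla_Q(M_1)M_2+M_1\nabla_{Q,R}(M_2)$, the rank bound $kr$ on $\mu_k^0(X)$, and the passage to $\mu_k(X)$ through the Zariski-closed minor-vanishing condition. You also write out the $k$-factor expansion explicitly, with the mixed operator on the first factor in (i) and on the last in (ii); the paper leaves this step as ``proceeds as in'' the one-matrix case.
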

\begin{proof}
    If $A=M_1M_2$, then 
    \begin{equation*}
        \begin{aligned}
            \nabla_{P,Q}(A)&=P(M_1M_2)-(M_1M_2)Q\\
            &=(PM_1)M_2-(M_1Q)M_2+M_1(QM_2)-M_1(M_2Q)\\
            &=(PM_1-M_1Q)M_2+M_1(QM_2-M_2Q)\\
            &=\nabla_{P,Q}(M_1) \cdot M_2+M_1 \cdot \nabla_{Q}(M_2)
        \end{aligned}
    \end{equation*}
    and
    \begin{equation*}
        \begin{aligned}
            \nabla_{Q,R}(A)&=Q(M_1M_2)-(M_1M_2)R\\
            &=(QM_1)M_2-(M_1Q)M_2+M_1(QM_2)-M_1(M_2R)\\
            &=(QM_1-M_1Q)M_2+M_1(QM_2-M_2R)\\
            &=\nabla_{Q}(M_1) \cdot M_2+M_1 \cdot \nabla_{Q,R}(M_2).
        \end{aligned}
    \end{equation*}
    The rest of the proof proceeds as in the proof of Theorem \ref{thm:lower bound from displacement rank, one matrix}.
\end{proof}

Let $Z_s$ denote the $n \times n$ matrix
\begin{equation*}
    Z_s=\begin{bmatrix}
            0 & & &s\\
            1 & 0 & & \\
            & \ddots & \ddots & \\
            & & 1 & 0
        \end{bmatrix}.
\end{equation*}

\begin{corollary}[Toeplitz matrix structure]\label{cor:displacement rank of Toeplitz}
    Consider $\mathcal{T}_n$ the affine variety of $n \times n $ Toeplitz matrices. Then, for every $A \in \mathbb{C}^{n \times n}$, 
    \begin{equation*}
        \underline{\mathbf{FL}}_{\mathcal{T}_n}(A) \geq \left\lceil \frac{\rank(\nabla_{Z_0}(A))}{2} \right\rceil~\text{ and }~\underline{\mathbf{FL}}_{\mathcal{T}_n}(A) \geq \left\lceil \frac{\rank(\nabla_{Z_1,Z_0}(A))}{2} \right\rceil.
    \end{equation*}
    In particular, for an $n \times n$ matrix $X$ with $n^2$ independent variables, the $(2r+1)$-minors of 
    \begin{equation*}
        \nabla_{Z_0}(X)~\text{ and }~\nabla_{Z_1,Z_0}(X)
    \end{equation*}
    are equations vanishing on $\mu_r(\mathcal{T}_n)$.
\end{corollary}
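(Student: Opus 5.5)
The plan is to reduce the statement to Theorem \ref{thm:lower bound from displacement rank, one matrix} and Theorem \ref{thm:lower bound from displacement rank, two matrices}(\romannumeral1) with $Q=Z_0$, $P=Z_1$ and the constant $2$ in place of $r$. So the only real work is to check two hypotheses: $\rank(\nabla_{Z_0}(T))\le 2$ and $\rank(\nabla_{Z_1,Z_0}(T))\le 2$ for every $T\in\mathcal{T}_n$.

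For the first hypothesis, I would compute $Z_0T-TZ_0$ directly for $T=[a_{j-i}]$. Left multiplication by $Z_0$ shifts rows down by one and puts zeros in the first row. Right multiplication by $Z_0$ shifts columns left by one and puts zeros in the last column. Concretely, $(Z_0T)_{ij}=a_{j-i+1}$ for $i\ge 2$ and $(TZ_0)_{ij}=a_{j+1-i}$ for $j\le n-1$. Hence these entries agree whenever $i\ge2$ and $j\le n-1$, and $\nabla_{Z_0}(T)$ is supported on the first row together with the last column. A matrix supported on one row and one column has rank at most $2$. The same computation with $Z_1$ only adds the entry $a_{j-n}$ in the first row of $Z_1T$, so $\nabla_{Z_1,Z_0}(T)$ is again supported on the first row and last column and has rank at most $2$. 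Thus, for each $M\in\mathcal{T}_n$,
\begin{equation*}
\rank(\nabla_{Z_0}(M))\le 2 \quad\text{and}\quad \rank(\nabla_{Z_1,Z_0}(M))\le 2.
\end{equation*}
The two displayed lower bounds on $\underline{\mathbf{FL}}_{\mathcal{T}_n}(A)$ then follow immediately from the two theorems.

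For the ``in particular'' part, I would unwind the proofs of those theorems. Let $A=M_1\cdots M_r\in\mu_r^0(\mathcal{T}_n)$. Iterating the Leibniz-type identity gives $\nabla_{Z_0}(A)=\sum_i M_1\cdots M_{i-1}\nabla_{Z_0}(M_i)M_{i+1}\cdots M_r$, so $\rank\nabla_{Z_0}(A)\le 2r$, and all $(2r+1)$-minors of $\nabla_{Z_0}(A)$ vanish. For $\nabla_{Z_1,Z_0}$, the identity from Theorem \ref{thm:lower bound from displacement rank, two matrices} gives
\begin{equation*}
\nabla_{Z_1,Z_0}(M_1\cdots M_r)=\nabla_{Z_1,Z_0}(M_1)M_2\cdots M_r+M_1\nabla_{Z_0}(M_2\cdots M_r).
\end{equation*}
Combined with the previous expansion, this yields rank at most $2r$ as well. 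The entries of $\nabla_{Z_0}(X)$ and $\nabla_{Z_1,Z_0}(X)$ are linear in the entries of $X$, so each $(2r+1)$-minor is a polynomial in the entries of $X$ that vanishes on $\mu_r^0(\mathcal{T}_n)$. It therefore vanishes on the Zariski closure $\mu_r(\mathcal{T}_n)$.

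No step is a serious obstacle. The only place needing care is the index bookkeeping in the shift computation, in particular confirming that the wrap-around term from $Z_1$ lands in the first row and so does not enlarge the support beyond the first row and last column.
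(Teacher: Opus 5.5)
Your proposal is correct and follows the paper's proof. The paper also computes $Z_0T-TZ_0$ and $Z_1T-TZ_0$ explicitly, notes that both are supported on the first row and last column (so they have rank at most $2$), and then invokes Theorems \ref{thm:lower bound from displacement rank, one matrix} and \ref{thm:lower bound from displacement rank, two matrices}; your explicit unwinding of the product identities for the $(2r+1)$-minors spells out what the paper leaves inside the proofs of those theorems.
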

\begin{proof}
    For a Toeplitz matrix $T$ with $T_{ij}:=t_{j-i}$ for all $1 \le i,j \leq n$, since
    \begin{equation*}
        Z_0T-TZ_0=\begin{bmatrix}
            -t_1 & -t_2 & \cdots & -t_{n-1} & 0\\
            0 & 0 & \cdots & 0 & t_{n-1}\\
            0 & 0 & \cdots & 0 & t_{n-2}\\
            \vdots & \vdots &\ddots & \vdots & \vdots\\
            0 & 0 & \cdots & 0 & t_{1}\\
        \end{bmatrix}
    \end{equation*}
    and 
    \begin{equation*}
        Z_1T-TZ_0=\begin{bmatrix}
            t_{1-n}-t_{1} & t_{2-n}-t_2 & \cdots & t_{-1}-t_{n-1} & t_0\\
            0 & 0 & \cdots & 0 & t_{n-1}\\
            0 & 0 & \cdots & 0 & t_{n-2}\\
            \vdots & \vdots &\ddots & \vdots & \vdots\\
            0 & 0 & \cdots & 0 & t_{1}\\
        \end{bmatrix},
    \end{equation*}
    then we obtain that
    \begin{equation*}
        \rank(\nabla_{Z_0}(T)) \leq 2~\text{ and }~\rank(\nabla_{Z_1,Z_0}(T)) \leq 2.
    \end{equation*}
    By Theorems \ref{thm:lower bound from displacement rank, one matrix} and \ref{thm:lower bound from displacement rank, two matrices}, the assertions hold.
\end{proof}

\begin{example}[Hankel matrix structure]\label{ex:defining equations for Hankel matrix structure}
    We show that $\mathcal{H}_6 \nsubseteq \mu_2(\mathcal{H}_6)$. Consider the Hankel matrix
    \begin{equation*}
        H=\begin{bmatrix}
            0 & 0 & 0 & 0 & 0 & 0\\
            0 & 0 & 0 & 0 & 0 & 1\\
            0 & 0 & 0 & 0 & 1 & 0\\
            0 & 0 & 0 & 1 & 0 & 0\\
            0 & 0 & 1 & 0 & 0 & 0\\
            0 & 1 & 0 & 0 & 0 & 0
        \end{bmatrix} \in \mathcal{H}_6.
    \end{equation*}
    From the proof of Proposition \ref{prop:isomorphism between Toeplitz and Hankel factorization varieties}, we have $\mu_2(\mathcal{H}_6)=\mu_2(\mathcal{T}_6)$, and so it suffices to show that $H \notin \mu_2(\mathcal{T}_6)$. The displacement of $H$  with respect to $Z_1$ and $Z_0$ is 
    \begin{equation*}
        \nabla_{Z_1,Z_0}(H)=Z_1H-HZ_0=\begin{bmatrix}
            0 & 1 & 0 & 0 & 0 & 0\\
            0 & 0 & 0 & 0 & -1 & 0\\
            0 & 0 & 0 & -1 & 0 & 1\\
            0 & 0 & -1 & 0 & 1 & 0\\
            0 & -1 & 0 & 1 & 0 & 0\\
            -1 & 0 & 1 & 0 & 0 & 0
        \end{bmatrix}
    \end{equation*}
    which has rank $6$. By Corollary \ref{cor:displacement rank of Toeplitz}, $\underline{\mathbf{FL}}_{\mathcal{T}_6}(H) \geq 3$, and so $H \notin \mu_2(\mathcal{T}_6)=\mu_2(\mathcal{H}_6)$.
\end{example}

\begin{remark}[Hankel matrix structure]
    Even though $\rank(\nabla_{Z_1,Z_0^T}(H))\leq 2$ for every Hankel matrix $H$ (see \cite[Example 1.3.3]{MR1843842}), we emphasize that
    \begin{equation*}
        \underline{\mathbf{FL}}_{\mathcal{H}_n}(A) \ngeq \left\lceil \frac{\rank(\nabla_{Z_1,Z_0^T}(A))}{2} \right\rceil
    \end{equation*}
    in general. For example, the identity matrix $I_6$ is the square of the reversal permutation matrix $J$ described at (\ref{eq:reversal permutation matrix}), and so is in $\mu_2(\mathcal{H}_6)$. However,
    \begin{equation*}
        \rank(\nabla_{Z_1,Z_0^T}(I_6))=\rank(Z_1-Z_0^T)=5.
    \end{equation*}
\end{remark}

\begin{remark}
    If $n=2m$ for some $m \geq 1$, then a generic matrix $A$ satisfies that $\nabla_{Z_1,Z_0}(A)$ is of full rank. From Theorem \ref{thm:lower bound from displacement rank, two matrices}, we can get the following lower bound:
    \begin{equation*}
        \underline{\mathbf{FL}}_{\mathcal{T}_n}(A) \geq \left\lceil \frac{2m}{2} \right\rceil=m.
    \end{equation*}
    Since $\mathbf{FL}_{\text{gen}}(\mathcal{T}_n)=\lceil 2m/2\rceil+1=m+1$, then we can say that this displacement rank approach does not always yield the optimal lower bound. This implies that this approach does not always provide the whole defining ideal of $\mu_r(\mathcal{T}_n)$.
\end{remark}

We leave the determination of the defining ideals of $\mu_r(\mathcal{T}_n)$ and $\mu_r(\Lambda_n)$ as future work.

\section{Orthogonally Invariant Matrix Structures}\label{section:Orthogonally Invariant Matrix Structures}

Let $\operatorname{O}(n)$ be the $n \times n$ orthogonal group, i.e., $\operatorname{O}(n)=\{g \in \mathbb{C}^{n \times n}~|~g^Tg=I_n\}$. Assume that an affine variety $X \subseteq \mathbb{C}^{n\times n}$ is invariant under the congruent action of $\operatorname{O}(n)$, i.e., $g M g^T \in X$ for any $M \in X$ and $g \in \operatorname{O}(n)$. If $A \in \mathbb{C}^{n \times n}$ satisfies the equality $A=M_1M_2\cdots M_r$ for $M_1,...,M_r \in X$, then 
\begin{equation*}
    gAg^T=g(M_1M_2\cdots M_r)g^T=(gM_1g^T)(gM_2g^T)\cdots(gM_rg^T)
\end{equation*}
where $gM_1g^T,...,gM_rg^T \in X$. Hence, we obtain that
\begin{equation*}
    \mathbf{FL}_X(A)=\mathbf{FL}_X(gAg^T)~\text{ and }~\underline{\mathbf{FL}}_X(A)=\underline{\mathbf{FL}}_X(gAg^T)
\end{equation*}
for any $g \in \operatorname{O}(n)$. The equality for the border $X$-factorization length can be derived from Corollary \ref{cor:limit expression}. In this section, we investigate the irreducible $\operatorname{O}(n)$-invariant subspaces of $\mathbb{C}^{n \times n}$.

It is well-known that $\mathbb{C}^{n \times n}$ has the following irreducible decompositions as $\operatorname{O}(n)$-representation:
\begin{equation*}
    \mathbb{C}^{n \times n}=
        \Lambda_n \oplus \mathcal{S}_n^0 \oplus \mathbb{C}\langle I_n \rangle
\end{equation*}
where $\mathcal{S}_n^0$ denotes the affine variety of $n \times n$ traceless symmetric matrices. Among the irreducible components, an interesting object that has not yet been treated is $\mathcal{S}_n^0$.

First, we consider the case of $n=2$. Since
\begin{equation*}
    \begin{bmatrix}
        a & b\\
        b & -a
    \end{bmatrix}\begin{bmatrix}
        c & d\\
        d & -c
    \end{bmatrix}=\begin{bmatrix}
        ac+bd & ad-bc\\
        -(ad-bc) & ac+bd
    \end{bmatrix},
\end{equation*}
then we can say that a product of two traceless symmetric matrices is a Toeplitz matrix. In addition, since
\begin{equation*}
    \begin{bmatrix}
        a & b\\
        b & -a
    \end{bmatrix}\begin{bmatrix}
        c & d\\
        -d & c
    \end{bmatrix}=\begin{bmatrix}
        ac-bd & ad+bc\\
        ad+bc & -(ac-bd)
    \end{bmatrix},
\end{equation*}
then we can say that a product of three traceless symmetric matrices is a traceless symmetric matrix. Thus, $\mu_r(\mathcal{S}_2^0) \neq \mathbb{C}^{2 \times 2}$ for any $r \geq 1$. However, the situation is different when $n \geq 3$.

\begin{theorem}\label{thm:the generic length of traceless symmetric matrix structure}
    If $n \geq 3$, then  $\mathbf{FL}_{\text{gen}}(\mathcal{S}_n^0)=2$ and $\mathbf{FL}_{\text{max}}(\mathcal{S}_n^0)\leq 10$.
\end{theorem}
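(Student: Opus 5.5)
Following the three-step strategy of Section \ref{section:Preliminaries}: $\mathcal{S}_n^0$ is a linear space of dimension $n(n+1)/2-1$, hence irreducible, and a cone. By Proposition \ref{prop:lower bound of generic length}, dominance of $m_r$ needs $r\geq\lceil (n^2-1)/(n(n+1)/2-2)\rceil$. For $n\geq 3$ this is $2$, since $n^2-1\leq n^2+n-4$ holds iff $n\geq 3$. So $r=1$ is impossible, and it is enough to show that $m_2:\mathcal{S}_n^0\times\mathcal{S}_n^0\to\mathbb{C}^{n\times n}$ is dominant.

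By Proposition \ref{prop:the rank of differential and dominance}, this reduces to finding one point $(A,B)$ where $d(m_2)_{(A,B)}(X,Y)=XB+AY$ is surjective for $X,Y\in\mathcal{S}_n^0$. I would take $A$ to be an invertible traceless diagonal matrix $D=\operatorname{diag}(a_1,\dots,a_n)$ and $B=\operatorname{diag}(b_1,\dots,b_n)$ traceless, both with distinct generic entries. For $n$ odd, $A$ should not be forced singular; I just need $\sum a_i=0$ with all $a_i\neq0$, which is possible for $n\geq 2$.

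For an off-diagonal pair $(i,j)$ with $i<j$, write $x=X_{ij}=X_{ji}$ and $y=Y_{ij}=Y_{ji}$. The $(i,j)$ and $(j,i)$ entries of $XB+AY$ are $xb_j+a_iy$ and $xb_i+a_jy$. These two are independent exactly when $a_jb_j-a_ib_i\neq 0$, which holds when the products $c_i:=a_ib_i$ are distinct. So all $n(n-1)$ off-diagonal entries are hit.

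The diagonal part decouples. The diagonal entries of the image are $x_ib_i+a_iy_i$ with $\sum x_i=\sum y_i=0$. Since $a_i\neq0$, I substitute $u_i=a_iy_i$; the constraints become $\sum u_i/a_i=0$ and $\sum x_i=0$. The image is then $\{x\circ b+u\}$, a sum of two subspaces of dimension $n-1$ each. It is all of $\mathbb{C}^n$ unless the two annihilators coincide, that is, unless the vector $(1/a_i)$ is proportional to $(1/b_i)$. Choosing $b$ not proportional to $a$ with distinct $a_ib_i$ is clearly possible for $n\geq3$, for instance $a=(1,2,-3,\dots)$. This gives full rank $n^2$, and hence $\mathbf{FL}_{\text{gen}}(\mathcal{S}_n^0)=2$.

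For the maximal length, Proposition \ref{prop:the number of structured matrices needed to express every matrix} with $r=2$ shows that every matrix is a product of $8$ traceless symmetric matrices and a diagonal matrix. The main obstacle is that a diagonal matrix need not be traceless symmetric, so the last factor must be absorbed by hand. My plan is to show that every diagonal matrix $D$ is a product of at most $2$ elements of $\mathcal{S}_n^0$, giving $8+2=10$.

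I would first try the following: if $D$ is invertible, write $D=D_1D_2$ with $D_1,D_2$ diagonal traceless, choosing the entries of $D_1$ nonzero with $\sum d_i/(D_1)_{ii}=0$ and $\sum (D_1)_{ii}=0$. This is two conditions on $n\geq 3$ free nonzero parameters, so it is solvable generically. If $D$ is singular, the zero entries give more freedom, since the corresponding entries of $D_1$ can be arbitrary.

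If some case resists, a fallback is to avoid the diagonal altogether by rerunning the argument of Proposition \ref{prop:the number of structured matrices needed to express every matrix}. Every invertible matrix is a product of $4$ elements by Lemma \ref{lemma:topological group}. A singular matrix $PDQ$ can be rewritten as $P(D_1D_2)Q$ and the factors regrouped with $\operatorname{O}(n)$-congruence.
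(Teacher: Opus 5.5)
Your proof that $\mathbf{FL}_{\text{gen}}(\mathcal{S}_n^0)=2$ is the paper's argument. You use a diagonal base point, $2\times 2$ systems with determinant $a_jb_j-a_ib_i$ for the off-diagonal entries, and a separate diagonal system. Your criterion that the diagonal part is onto iff $(a_i)$ is not proportional to $(b_i)$ is an equivalent and cleaner form of the paper's full-rank condition on the $(n+2)\times 2n$ matrix. It tacitly uses that all $b_i\neq 0$ too, which your generic choice gives.

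The gap is in $\mathbf{FL}_{\text{max}}(\mathcal{S}_n^0)\leq 10$. Beyond Proposition \ref{prop:the number of structured matrices needed to express every matrix}, the only real content there is that \emph{every} invertible diagonal $D=\operatorname{diag}(d_1,\dots,d_n)$ is $D_1D_2$ with $D_1,D_2$ traceless diagonal. Equivalently, the system $\sum_i a_i=0$, $\sum_i d_i/a_i=0$ must have a solution in $(\mathbb{C}^*)^n$ for every such $D$. ``Two conditions on $n\geq 3$ free parameters, so solvable generically'' does not prove this. A parameter count gives an expected dimension, not nonemptiness, and ``generically'' covers only generic $D$, not all of them.

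The count is genuinely misleading here. The solution set is a cone, since the equations are homogeneous of degrees $1$ and $-1$. For $n=2$ the same count predicts a $0$-dimensional solution set, but in fact there are no solutions unless $d_1=d_2$. For $n=3$ you really have to solve a quadratic and check that its roots avoid the hyperplanes $a_i=0$. Your fallback does not bypass this: writing $P(D_1D_2)Q$ presupposes the same factorization of $D$, and the regrouping via $\operatorname{O}(n)$-congruence is never specified.

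The paper supplies the missing step with an explicit one-parameter ansatz. It takes $D_1=\operatorname{diag}(1,\dots,1,t,-(t+n-2))$, so the trace condition on $D_2=D_1^{-1}D$ becomes
$S\,t(t+n-2)+d_{n-1}(t+n-2)-d_nt=0$, where $S=\sum_{i\leq n-2}d_i$.
At $t=0$ and $t=-(n-2)$ the left side equals $(n-2)d_{n-1}\neq 0$ and $(n-2)d_n\neq 0$. So every root is admissible, and this is where $n\geq 3$ enters.

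When you write this up, also check that a root exists at all. The polynomial collapses to a nonzero constant exactly when $S=0$ and $d_{n-1}=d_n$, for example $D=\operatorname{diag}(1,-1,1,1)$. You can avoid this by relabeling indices. If the degeneracy held for every choice of the two distinguished indices, all $d_i$ would equal some $c$ with $(n-2)c=0$, contradicting invertibility.

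You also need $D$ to be a product of \emph{exactly} two factors, not ``at most $2$''. Since $I_n\notin\mathcal{S}_n^0$, the inclusion $\mu_9^0(\mathcal{S}_n^0)\subseteq\mu_{10}^0(\mathcal{S}_n^0)$ is not automatic. Your construction does produce two factors, so this is only a wording fix. Your treatment of singular $D$ is fine: put a $0$ in one factor at a zero position and use the free entry of the other factor to fix its trace.
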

\begin{proof}
    We prove that the matrix multiplication map $m_2:(\mathcal{S}_n^0)^{\times 2} \rightarrow \mathbb{C}^{n \times n}$ is dominant. Consider its differential
    \begin{equation*}
        d(m_2)_{(A,B)}: (X,Y) \rightarrow XB+AY
    \end{equation*}
    at a point $(A,B) \in (\mathcal{S}_n^0)^{\times 2}$. By Proposition \ref{prop:the rank of differential and dominance}, it suffices to show that it is surjective. Choose 
    \begin{equation*}
        A=\begin{bmatrix}
            \lambda_1 &  & \\
             & \ddots & \\
             & & \lambda_n
        \end{bmatrix}~\text{ and }~B=\begin{bmatrix}
            \mu_1 &  & \\
             & \ddots & \\
             & & \mu_n
        \end{bmatrix}
    \end{equation*}
    such that $\sum_{i=1}^n \lambda_i=\sum_{i=1}^n \mu_i=0$, $\lambda_i\mu_i$'s are pairwise distinct, and the matrix 
    \begin{equation*}
        \left[\begin{array}{ccc|ccc}
             \mu_1 & & & \lambda_1 & & \\
             & \ddots &  & & \ddots & \\
             & & \mu_n & & & \lambda_n \\ \hline
             1 & \cdots & 1 & & &\\ \hline
             & & & 1 & \cdots & 1
        \end{array}\right]
    \end{equation*}
    has full rank. Note that a generic diagonal matrix pair $(A,B)$ satisfies the conditions, and hence such choice must exist. Let $M \in \mathbb{C}^{n \times n}$. The formula $d(m_2)_{(A,B)}(X,Y)=XB+AY$ implies that 
    \begin{equation*}
        (d(m_2)_{(A,B)}(X,Y))_{ij}=\mu_jX_{ij}+\lambda_iY_{ij}.
    \end{equation*}
    Since $X$ and $Y$ are symmetric, then we also obtain that 
    \begin{equation*}
        (d(m_2)_{(A,B)}(X,Y))_{ji}=\mu_iX_{ji}+\lambda_jY_{ji}=\mu_iX_{ij}+\lambda_jY_{ij}.
    \end{equation*}
    Hence, the following equality must hold:
    \begin{equation}\label{eq:linear system from differential, traceless symmetric structure}
        \begin{bmatrix}
            M_{ij}\\
            M_{ji}
        \end{bmatrix}=\begin{bmatrix}
            \mu_j & \lambda_i\\
            \mu_i & \lambda_j
        \end{bmatrix}\begin{bmatrix}
            X_{ij}\\ Y_{ij}
        \end{bmatrix}.
    \end{equation}
    For all $i,j$ such that $i \neq j$, since 
    \begin{equation*}
        \det \begin{bmatrix}
            \mu_j & \lambda_i\\
            \mu_i & \lambda_j
        \end{bmatrix} = \mu_j\lambda_j-\mu_i\lambda_i \neq 0,
    \end{equation*}
    then there exist $X_{ij}$ and $Y_{ij}$ which satisfy the equation (\ref{eq:linear system from differential, traceless symmetric structure}). In addition, the diagonal entries $M_{11},...,M_{nn}$ must satisfy the following equation:
    \begin{equation*}
        \left[\begin{array}{ccc|ccc}
             \mu_1 & & & \lambda_1 & & \\
             & \ddots &  & & \ddots & \\
             & & \mu_n & & & \lambda_n \\ \hline
             1 & \cdots & 1 & & &\\ \hline
             & & & 1 & \cdots & 1
        \end{array}\right]\begin{bmatrix}
            X_{11} \\ \vdots \\ X_{nn} \\ Y_{11} \\ \vdots \\ Y_{nn}
        \end{bmatrix}=\begin{bmatrix}
            M_{11} \\ M_{22} \\ \vdots \\ M_{nn} \\ 0 \\ 0
        \end{bmatrix}.
    \end{equation*}
    Since the coefficient $(n+2) \times 2n$ matrix is of full rank, then there exist $X_{11},...,X_{nn},Y_{11},...,Y_{nn}$ satisfying this equation. Therefore, $d(m_2)_{(A,B)}$ is surjective, and so $m_2:(\mathcal{S}_n^0)^{\times 2} \rightarrow \mathbb{C}^{n \times n}$ is dominant. 
    
    By Proposition \ref{prop:the number of structured matrices needed to express every matrix}, $\mathbf{FL}_{\text{gen}}(\mathcal{S}_n^0)=2$ implies that every matrix is a product of $8$ traceless symmetric matrices and a diagonal matrix. We claim that a diagonal matrix is a product of two traceless symmetric matrices. If we show that arbitrary diagonal matrix
    \begin{equation*}
        \begin{bmatrix}
            d_1 &  &\\
            & \ddots &\\
            & & d_n
        \end{bmatrix},
    \end{equation*}
    satisfies
    \begin{equation*}
        \begin{bmatrix}
            d_1 &  &\\
            & \ddots &\\
            & & d_n
        \end{bmatrix}=\begin{bmatrix}
            a_1 &  &\\
            & \ddots &\\
            & & a_n
        \end{bmatrix}\begin{bmatrix}
            b_1 &  &\\
            & \ddots &\\
            & & b_n
        \end{bmatrix},~a_1+\cdots+a_n=0,~\text{and }b_1+\cdots+b_n=0,
    \end{equation*}
    then the claim is true. We prove this by considering cases.
    \begin{itemize}
        \item [(\romannumeral1)] Assume that there is at least one $i$ such that $d_i = 0$, say $d_n=0$. Let $a_1,...,a_{n-1}$ be nonzero complex numbers such that $a_1+\cdots+a_{n-1}=0$. Then we have
    \begin{equation*}
        \left[\begin{array}{ccc|c}
            d_1 &  & &\\
            & \ddots & &\\
            & & d_{n-1} &\\ \hline
            & & & 0
        \end{array}\right]=\left[\begin{array}{ccc|c}
            a_1 &  & &\\
            & \ddots & &\\
            & & a_{n-1} & \\ \hline
             & & & 0
        \end{array}\right]\left[\begin{array}{ccc|c}
            d_1/a_1 &  & &\\
            & \ddots & &\\
            & & d_{n-1}/a_{n-1} & \\ \hline
             & & & -\sum_{i=1}^{n-1}(d_i/a_i)
        \end{array}\right],
    \end{equation*}
    which is a product of two traceless diagonal matrices. If $d_n \neq0$, then for $i$ such that $d_i=0$ we can prove similarly.
        \item [(\romannumeral2)] Assume that all $d_i$'s are nonzero. We have
    \begin{equation*}
        \begin{aligned}
           &\left[\begin{array}{ccc|cc}
            d_1 &  & & &\\
            & \ddots & & &\\
            & & d_{n-2} & &\\ \hline
            & & & d_{n-1} & \\
            & & & & d_n
        \end{array}\right]\\
        &=\left[\begin{array}{ccc|cc}
            1 &  & & &\\
            & \ddots & & &\\
            & & 1 & &\\ \hline
             & & & t & \\
             & & & & -(t+n-2)
        \end{array}\right]\left[\begin{array}{ccc|cc}
            d_1 &  & & &\\
            & \ddots & & &\\
            & & d_{n-2} & & \\ \hline
             & & & d_{n-1}/t & \\
             & & & & -d_n/(t+n-2)
        \end{array}\right],
        \end{aligned}
    \end{equation*}
    where $t$ satisfies that $t \neq 0$ and $t \neq -n+2$. The rightmost matrix, say $B(t)$, is traceless if
    \begin{equation*}
        \sum_{i=1}^{n-2}d_i+\frac{d_{n-1}}{t}-\frac{d_n}{t+n-2}=0.
    \end{equation*}
    This equation gives us a quadratic polynomial equation where $t=0$ and $t=-n+2$ are not solutions. Hence, there exists a complex number $t$ such that $t \neq 0$, $t \neq -n+2$, and $B(t)$ is traceless.
    \end{itemize}
    Thus, every diagonal matrix is a product of two traceless symmetric matrices. Therefore, we have that every matrix is a product of $10$ traceless symmetric matrices.
\end{proof}

\section{Alternating Minimization for Structured Matrix Factorization}\label{section:Alternating Minimization for Structured Matrix Factorization}

Although the preceding sections are formulated over $\mathbb{C}$, in this section we work over $\mathbb{R}$ for the numerical computations.

\subsection{Problem formulation}
In this section, we describe an alternating minimization method for constructing Toeplitz factorizations numerically. 
Although the preceding sections are formulated over $\mathbb{C}$, we work over $\mathbb{R}$ in this section, since the examples below concern real target matrices and real Toeplitz factors. 
The same least-squares formulation can also be considered over $\mathbb{C}$.

Given $A\in\R^{n\times n}$ and a prescribed length $r\geq 1$, we consider the optimization problem
\begin{equation}\label{eq:main_obj_mat}
\min_{M_1,\ldots,M_r\in\mathcal{T}_n}
\frac12\|M_1\cdots M_r-A\|_F^2.
\end{equation}
For a Toeplitz matrix $B=[b_{j-i}]$, let
\[
b=(b_{-(n-1)},\ldots,b_{n-1})^T\in\R^{2n-1}
\]
be the vector of its diagonal entries. 
Let $S\in\R^{n^2\times(2n-1)}$ be the Toeplitz selection matrix, so that
\[
\vect(B)=Sb.
\]

For the $k$th block update, fix all factors except $M_k$ and write the variable factor as $B=[b_{j-i}]$. Set
\[
L=M_1\cdots M_{k-1},\qquad R=M_{k+1}\cdots M_r,
\]
with the convention that an empty product is the identity. Since
\[
\vect(LBR)=(R^\top\otimes L)Sb,
\]
the $k$th block minimization reduces to the linear least-squares problem
\begin{equation}\label{eq:block_toeplitz_subproblem}
\min_{b\in\R^{2n-1}}
\frac12\|(R^\top\otimes L)Sb-\vect(A)\|_2^2.
\end{equation}
Thus each block update is linear in the Toeplitz parameters of the factor being updated.

\subsection{Algorithm}

The resulting alternating least-squares method is summarized in Algorithm~\ref{alg:prox-als}.

\begin{algorithm}
\caption{Alternating least-squares updates for Toeplitz factors}
\label{alg:prox-als}
\begin{algorithmic}[1]
\State \textbf{Input:} $A \in \mathbb{R}^{n \times n}$, $r \in \mathbb{N}$, Toeplitz selection matrix $S$
\State initial Toeplitz factors $M_1^{(0)},\dots,M_r^{(0)}$ and $N_{\mathrm{ALS}} \in \mathbb{N}$
\For{$s = 0, \dots, N_{\mathrm{ALS}}-1$}
    \For{$k = 1, \dots, r$}
        \State $L^{(s,k)} = \prod_{i<k} M_i^{(s+1)}$
        \State $R^{(s,k)} = \prod_{i>k} M_i^{(s)}$
        \State $G^{(s,k)} = ((R^{(s,k)})^\top \otimes L^{(s,k)})S$
        \State choose $b_k^{(s+1)} \in \arg\min_{b \in \mathbb{R}^{2n-1}} \frac12\|G^{(s,k)}b-\vect(A)\|_2^2$
        \State $M_k^{(s+1)}$ is the Toeplitz matrix satisfying $\vect(M_k^{(s+1)})=Sb_k^{(s+1)}$
    \EndFor
\EndFor
\State \textbf{return} $(M_1^{(N_{\mathrm{ALS}})},\dots,M_r^{(N_{\mathrm{ALS}})})$
\end{algorithmic}
\end{algorithm}

\begin{theorem}
Let $\{(M_1^{(s)},\dots,M_r^{(s)})\}_{s\ge 0}$ be the iterates produced by Algorithm~\ref{alg:prox-als}. Then
\[
F_s:=\frac12\|M_1^{(s)}\cdots M_r^{(s)}-A\|_F^2
\]
is monotonically nonincreasing. In particular, the residual sequence
\[
\|M_1^{(s)}\cdots M_r^{(s)}-A\|_F
\]
converges.
\end{theorem}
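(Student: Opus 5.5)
The plan is the standard monotonicity argument for block coordinate descent. Introduce intermediate objective values inside one sweep. For $s\ge 0$ and $k=0,1,\dots,r$, let
\[
F_{s,k}:=\tfrac12\|M_1^{(s+1)}\cdots M_k^{(s+1)}M_{k+1}^{(s)}\cdots M_r^{(s)}-A\|_F^2 ,
\]
so that $F_{s,0}=F_s$ and $F_{s,r}=F_{s+1}$. It then suffices to show $F_{s,k}\le F_{s,k-1}$ for each $k=1,\dots,r$ and chain these inequalities.

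For the single-step inequality, fix $s$ and $k$. Set $L=L^{(s,k)}=\prod_{i<k}M_i^{(s+1)}$ and $R=R^{(s,k)}=\prod_{i>k}M_i^{(s)}$. By the identity $\vect(LBR)=(R^\top\otimes L)\vect(B)$ and $\vect(B)=Sb$ for Toeplitz $B$, we have, for every $b\in\R^{2n-1}$ with associated Toeplitz matrix $B$,
\[
\tfrac12\|G^{(s,k)}b-\vect(A)\|_2^2=\tfrac12\|LBR-A\|_F^2 .
\]
This uses that the Frobenius norm equals the Euclidean norm of the vectorization. Take $b_k^{(s)}$, the parameter vector of the current factor $M_k^{(s)}$; it is a feasible point of the least-squares problem, and its objective value is exactly $F_{s,k-1}$. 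The update $b_k^{(s+1)}$ is a minimizer, and a minimizer always exists for a linear least-squares problem, e.g.\ the pseudoinverse solution. Hence its objective value, which equals $F_{s,k}$, is at most $F_{s,k-1}$.

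Chaining gives $F_{s+1}=F_{s,r}\le F_{s,r-1}\le\cdots\le F_{s,0}=F_s$, so $(F_s)$ is monotonically nonincreasing. Since $F_s\ge 0$, the sequence is bounded below and converges by the monotone convergence theorem. The residual $\|M_1^{(s)}\cdots M_r^{(s)}-A\|_F=\sqrt{2F_s}$ is a continuous function of a convergent sequence, so it also converges.

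The only point requiring care is the bookkeeping: checking that the objective at the old iterate $b_k^{(s)}$ in the $k$th subproblem coincides with $F_{s,k-1}$. This holds because $L$ uses the already-updated factors with indices $i<k$ and $R$ uses the not-yet-updated factors with indices $i>k$, so the $k$th subproblem evaluated at $b_k^{(s)}$ is exactly $F_{s,k-1}$. Beyond that, the argument is routine.
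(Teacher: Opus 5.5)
Your proposal is correct and follows the same route as the paper: within one sweep you chain the per-block least-squares inequalities to get $F_{s+1}\le F_s$, then use $F_s\ge 0$ to conclude that $F_s$ converges, and hence so does $\sqrt{2F_s}$. Your explicit check that the $k$th subproblem, evaluated at the old factor $M_k^{(s)}$, equals $F_{s,k-1}$ is the step the paper leaves implicit.
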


\begin{proof}
Fix a sweep $s$. For $k=1,\ldots,r$, define
\[
B^{(s,k)}=M_1^{(s+1)}\cdots M_k^{(s+1)}M_{k+1}^{(s)}\cdots M_r^{(s)}
\]
and set $B^{(s,0)}=M_1^{(s)}\cdots M_r^{(s)}$. At the $k$th update, all factors other than $M_k$ are fixed, so the objective is the least-squares problem \eqref{eq:block_toeplitz_subproblem}. The chosen minimizer therefore satisfies
\[
\frac12\|B^{(s,k)}-A\|_F^2\leq \frac12\|B^{(s,k-1)}-A\|_F^2.
\]
Iterating this inequality over $k=1,\ldots,r$ gives $F_{s+1}\leq F_s$. Since $F_s\geq 0$, the sequence $\{F_s\}_{s\geq 0}$ has a finite limit, and the residuals are $\sqrt{2F_s}$.
\end{proof}

\begin{remark}
This result concerns only the objective values. In particular, it does not imply that the limiting residual is zero, nor that Algorithm~\ref{alg:prox-als} finds an exact Toeplitz factorization of $A$. It also does not imply convergence of the individual factors without further hypotheses.
\end{remark}

\subsection{Toeplitz examples}
We illustrate the method with two Toeplitz examples. In each example, the ALS output is used as an initial point for a nonlinear least-squares refinement of \eqref{eq:main_obj_mat}.

\begin{example}[A $3\times 3$ Toeplitz example]
The rational Toeplitz factorization from \cite[Example 1]{ye2016every} is
\[
A=\begin{bmatrix}
1 & 2 & 3 \\ 
4 & 5 & 6 \\ 
7 & 8 & 9
\end{bmatrix},\quad
M_1=\begin{bmatrix}
\frac{20}{9} & \frac{8}{9} & -\frac{4}{9} \\ 
\frac{32}{9} & \frac{20}{9} & \frac{8}{9} \\ 
\frac{44}{9} & \frac{32}{9} & \frac{20}{9}
\end{bmatrix},\quad
M_2=\begin{bmatrix}
\frac{1}{4} & 1 & 1 \\ 
1 & \frac{1}{4} & 1 \\ 
1 & 1 & \frac{1}{4}
\end{bmatrix}.
\]
$M_1^{(0)}$ and $M_2^{(0)}$ were initialized at
\[
M_1^{(0)}=\begin{bmatrix}
5 & 4 & 3 \\ 
6 & 5 & 4 \\ 
7 & 6 & 5
\end{bmatrix},
\qquad
M_2^{(0)}=I_3.
\]
The ALS output is then refined by nonlinear least squares. The refined factors are
\[
\widetilde M_1\approx
\begin{bmatrix}
4.67498258 & 1.86999303 & -0.93499652 \\
7.47997212 & 4.67498258 & 1.86999303 \\
10.28496167 & 7.47997212 & 4.67498258
\end{bmatrix},
\]
\[
\widetilde M_2\approx
\begin{bmatrix}
0.11883586 & 0.47534342 & 0.47534342 \\
0.47534342 & 0.11883586 & 0.47534342 \\
0.47534342 & 0.47534342 & 0.11883586
\end{bmatrix}.
\]
These factors recover the factorization above up to the scalar indeterminacy
\[
\widetilde M_1 \approx \alpha M_1,\qquad
\widetilde M_2 \approx \alpha^{-1}M_2,\qquad
\alpha\approx2.10374216.
\]
The resulting residual is
\[
\|\widetilde M_1\widetilde M_2-A\|_F\approx2.09\times 10^{-14}.
\]
\begin{figure}
\includegraphics[width=0.62\textwidth]{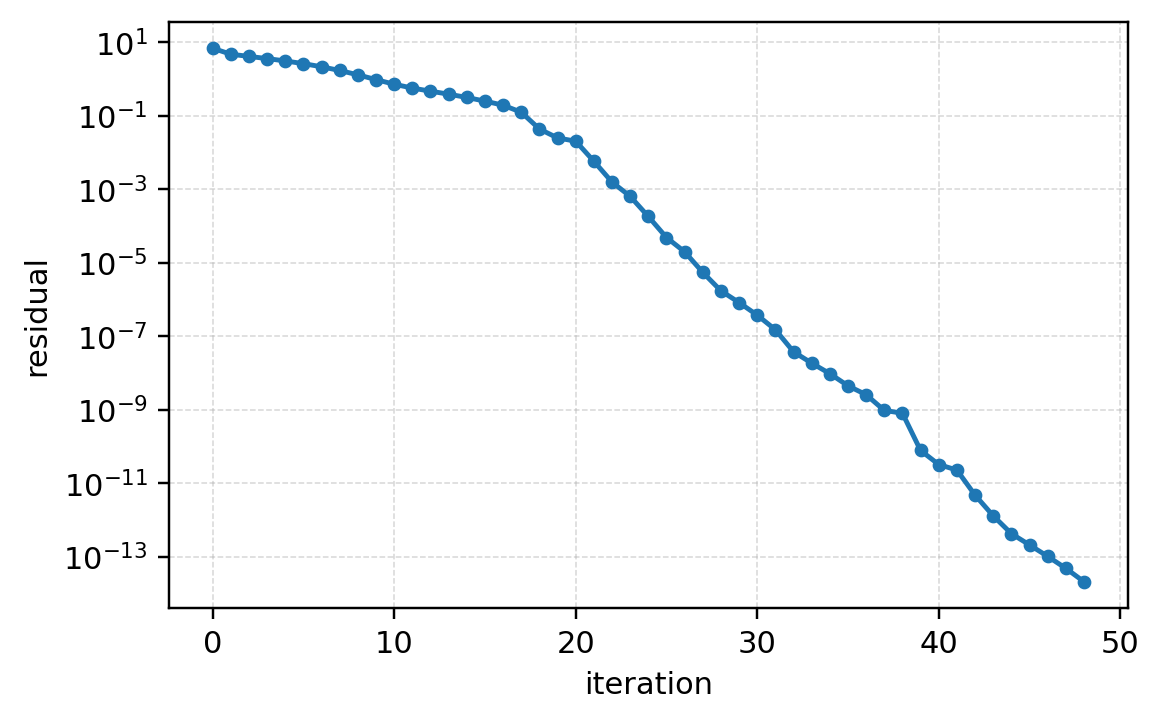}
\caption{Residuals for the alternating minimization method applied to the $3\times 3$ Toeplitz factorization. The residual is shown on a logarithmic scale.}
\label{fig:toeplitz-3x3-residual}
\end{figure}
\end{example}

\begin{example}[A $5\times 5$ Toeplitz example]
For the matrix from \cite[Example 2]{ye2016every},
\[
A=\begin{bmatrix}
2 & 5 & 2 & 5 & 3 \\ 
4 & 5 & 5 & 2 & 2 \\ 
2 & 3 & 2 & 1 & 5 \\ 
3 & 1 & 5 & 2 & 3 \\ 
4 & 1 & 2 & 4 & 3
\end{bmatrix},
\]
the construction displayed there expresses the target as a product of Toeplitz factors together with permutation matrices. 
We choose $r=15$ because the general upper bound $2n+5$ in Theorem~\ref{thm:previous results}.(\romannumeral2) gives $2\cdot 5+5=15$ for $n=5$. 
Thus this experiment tests whether the alternating minimization procedure can numerically realize a Toeplitz-only factorization at the length guaranteed by the theory.
We apply the alternating minimization method with $r=15$.
The computed Toeplitz factors $M_1,\ldots,M_{15}$ satisfy
\[
\|M_1\cdots M_{15}-A\|_F\approx6.91\times 10^{-14}.
\]
Thus, the computation gives a numerical Toeplitz-only factorization of the same target.
\begin{figure}
\includegraphics[width=0.62\textwidth]{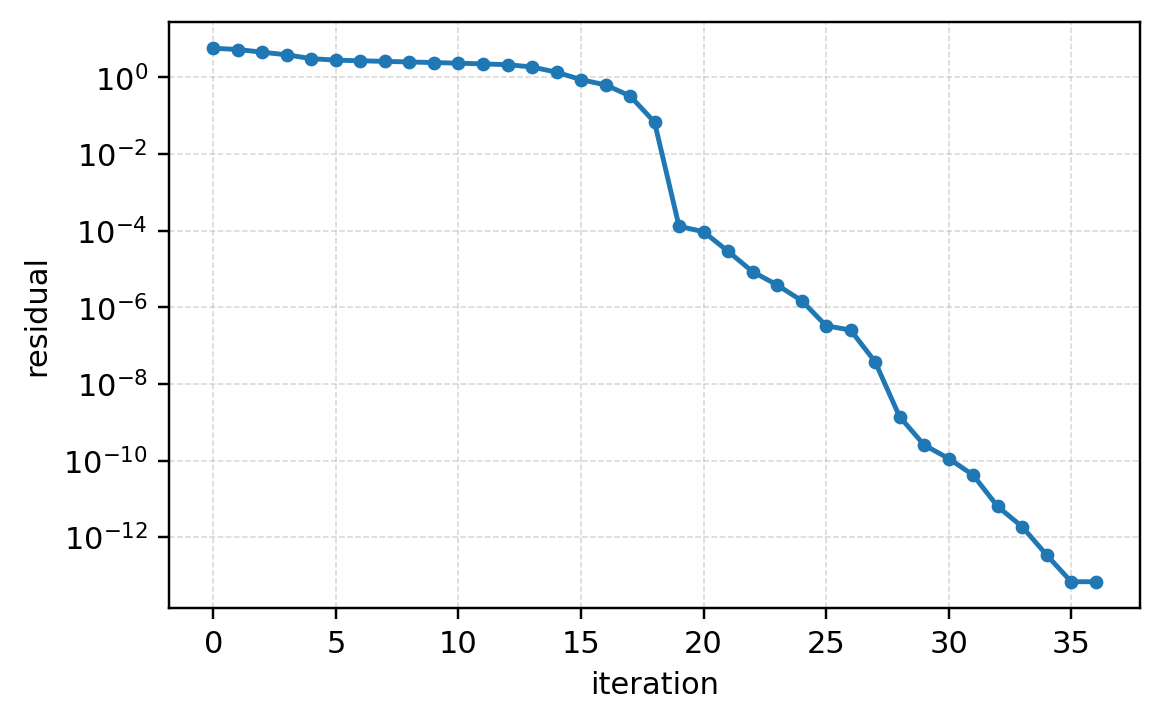}
\caption{Residuals for the alternating minimization method applied to the $5\times 5$ target matrix from \cite[Example 2]{ye2016every} with $r=15$. The residual is shown on a logarithmic scale.}
\label{fig:toeplitz-5x5-residual-depth15}
\end{figure}
The next display records the 15 factors on a common color scale, with five factors in each row.
\begin{figure}
\includegraphics[width=0.98\textwidth]{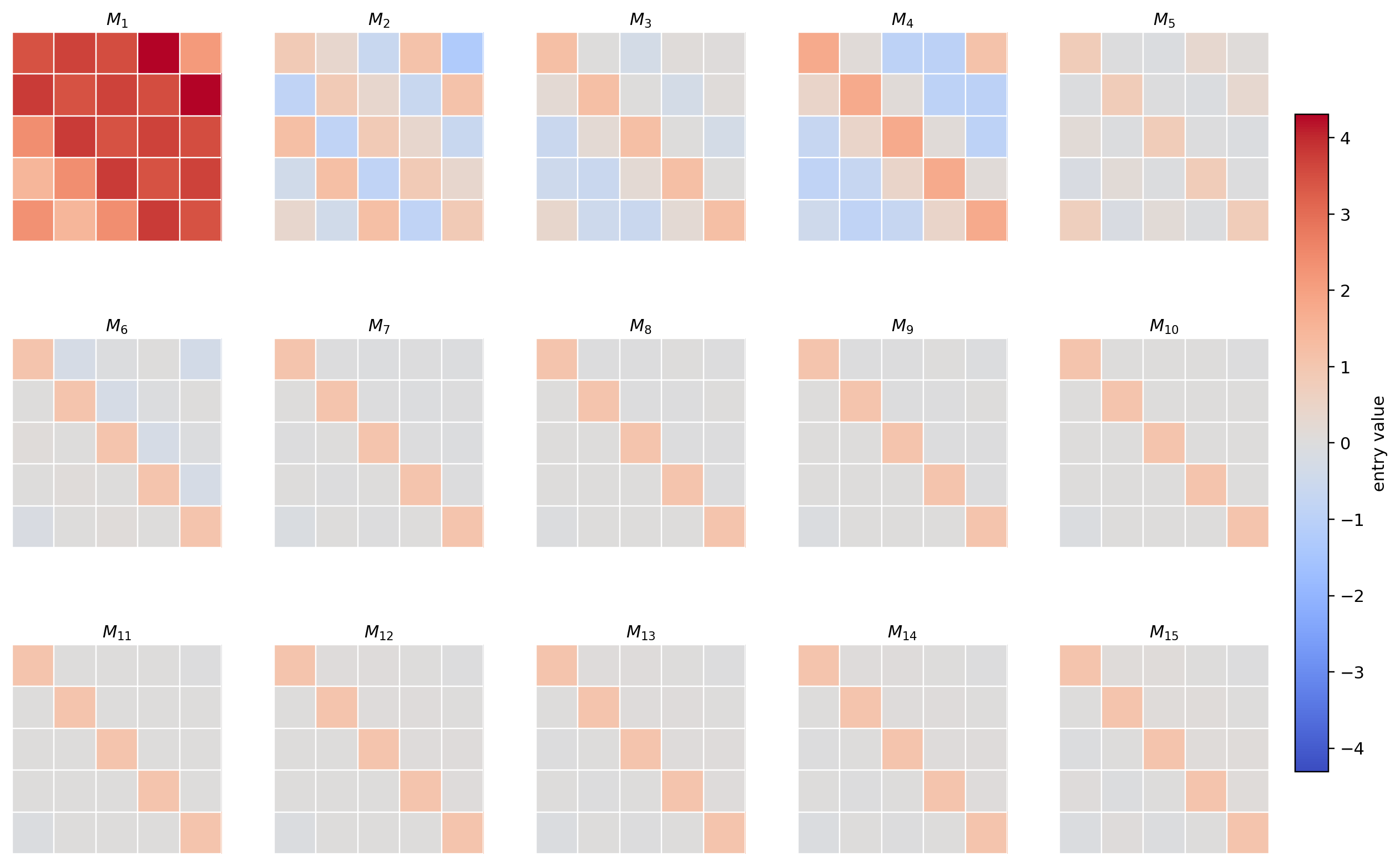}
\caption{Computed Toeplitz factors $M_1,\ldots,M_{15}$ for the $5\times 5$ example. All factors are displayed on the same color scale.}
\label{fig:toeplitz-5x5-factors-depth15}
\end{figure}
\end{example}

\section{Future Directions}

We suggest several directions for further study. First, we have left the theoretical determination of the degree and defining ideal of $\mu_r(\mathcal{T}_n)$ and $\mu_r(\Lambda_n)$ in Section \ref{subsection:Degree} and \ref{subsection:Defining equations via Displacement} for future work. Second, if there is an interesting matrix structure with which we have not dealt in this paper, then investigating the corresponding (border) structured matrix factorization length and the corresponding factorization variety will be also meaningful. Let $X \subseteq \mathbb{C}^{n \times n}$ denote an affine variety. Third, it will be interesting to calculate the Euclidean distance degree (simply, ED degree) of an $r$-th $X$-factorization variety. The ED degree measures the number of critical points of the squared distance function from a general matrix (see \cite{draisma2016euclidean}). Hence, it is related to the low-length approximation problem: for a given matrix, what is the closest matrix whose $X$-factorization length is at most $r$? Fourth, one should investigate the stability of $X$-factorization length under small perturbations, as in the cases of matrix rank, eigenvalue and singular value.  Fifth, since we did not suggest any deterministic and efficient algorithm for the structured matrix factorization length, it is natural to study the computational complexity of determining this quantity. In particular, it will be interesting for complexity theorists to determine whether this problem is in a specific complexity class, for instance, the class NP. Sixth, we can consider the (border) structured matrix factorization length over $\mathbb{R}$ instead of $\mathbb{C}$. It can be more practical considering the applicability to real-life problems. Since $\mathbb{R}$ is not algebraically closed, this direction will require the spirit of real algebraic geometry or differential geometry. Finally, one can generalize the theory from a single structure to several structures. Instead of fixing a single affine variety $X \subseteq \mathbb{C}^{n \times n}$, one can consider several affine varieties $X_1,...,X_k \subseteq \mathbb{C}^{n \times n}$ and study the matrix multiplication map $m_k:X_1 \times \cdots \times X_k \rightarrow \mathbb{C}^{n \times n}$. This framework includes the case of generalized Vandermonde factorization described in \cite[Section 6.2]{ye2017new}.

\section*{Data availability}
Our code is available at \url{https://github.com/taehyeong-matrix/Factorization-Length}

\bibliographystyle{abbrvnat}
\bibliography{bibliography}

\end{document}